\documentclass[12pt]{article}
\topmargin -0.2 in
\textwidth 5.9 in

\usepackage{amssymb,amsfonts,amsthm}
%\addtolength{\leftmargin}{-6.0\parindent} \hoffset-0.5in
%\voffset-0.65in \textheight23cm \textwidth16cm
\usepackage{amsmath,amsthm,amssymb}
\usepackage{amscd}
\usepackage{amsfonts}
\usepackage{color}
\topmargin -0.2 in \textwidth 5.9 in

\textheight 8.8 in
\newcommand{\vv}{{\cal V}}

\newcommand{\con}{\hbox{Cont}}

\newcommand{\ocs}{\hbox{OccSet}}
\newcommand{\var}{\hbox{var}}
\newcommand{\ch}{\hbox{Chaos}}
\newcommand{\lin}{\hbox{Lin}}
\newcommand{\non}{\hbox{Non}}
\newcommand{\dis}{\hbox{Dist}}

\newtheorem{theorem}{Theorem}[section]
\newtheorem{ex}[theorem]{Example}
\newtheorem{cor}[theorem]{Corollary}

\newtheorem{fact}[theorem]{Fact}
\newtheorem{lemma}[theorem]{Lemma}
\newtheorem{definition}[theorem]{Definition}

\newtheorem{prop}[theorem]{Proposition}

\newtheorem{question}{Question}
\newtheorem{claim}{Claim}

\begin{document}

\title{Finitely based monoids}
\author{ Olga Sapir }
\date{}

\maketitle

\begin{abstract} We present a method for proving that a semigroup is finitely based and find some new sufficient conditions under which a monoid is finitely based.
As an application, we find a class of finite aperiodic monoids where the finite basis property behaves in a complicated way with respect to the lattice operations but
can be recognized by a simple algorithm.

The method results in a short proof of the theorem of E. Lee that every monoid that satisfies $xt_1xyt_2y \approx xt_1yxt_2y$ and  $xyt_1xt_2y \approx yxt_1xt_2y$ is finitely based. Also, the method gives an alternative proof of the theorem of F. Blanchet-Sadri that a pseudovariety of $n$-testable languages is finitely based if and only if $n \le 3$.

\end{abstract}

{\bf Keywords:} Finite Basis Problem, Semigroups, Monoids, piecewise-testable languages

\section{Introduction}

A set of identities $\Sigma$ is said to be {\em finitely based} if all identities in $\Sigma$ can be derived from a finite subset of $\Sigma$.
The {\em equational theory} ($Eq(S)$) of a semigroup $S$ is the set of all identities holding in $S$.
A semigroup is said to be {\em finitely based} (FB) if its equational theory is finitely based.
Otherwise, a semigroup is said to be {\em non-finitely based} (NFB). The following construction is attributed to Dilworth and was used by P. Perkins \cite{P} to
construct one of the first examples of finite NFB semigroups.

 Let ${\mathfrak A}$ be an alphabet and $W$ be a set of words in the free monoid ${\mathfrak A}^*$. Let $S(W)$ denote the Rees quotient  over the ideal of  ${\mathfrak A}^*$ consisting of all words that are not subwords of words in $W$. For each set of words $W$, the semigroup $S(W)$ is a monoid with zero whose nonzero elements are the subwords of words in $W$. Evidently, $S(W)$ is finite if and only if $W$ is finite.

This article is the second part of a sequence of four submissions. The previous article \cite{OS3} contains a method for proving that a semigroup is non-finitely based.
In articles \cite{OS1, OS2} we study the following problem.

 \begin{question}\cite[M. Sapir]{SV} \label{qMS} Is the set of finite finitely based monoids of the form $S(W)$ recursive?
\end{question}

 If a variable $t$ occurs exactly once in a word ${\bf u}$ then we say that $t$ is {\em linear} in ${\bf u}$. If a variable $x$ occurs more than once in a word ${\bf u}$ then we say that $x$ is {\em non-linear} in ${\bf u}$. In article \cite{OS1}, we show how to recognize FB semigroups among the monoids of the form $S(W)$ where $W$ consists of a single word with at most two non-linear variables. In article \cite{OS2}, we show how to recognize FB semigroups among the monoids of the form $S(W)$
with some other natural restrictions on the set $W$.

We use $\var \Delta$ to denote the variety defined by a set of identities $\Delta$ and $\var S$ to denote the variety generated by a semigroup $S$.
The  identities $xt_1xyt_2y \approx xt_1yxt_2y$,  $xyt_1xt_2y \approx yxt_1xt_2y$ and  $xt_1yt_2xy \approx xt_1yt_2yx$ we denote respectively by $\sigma_{\mu}$,  $\sigma_1$ and $\sigma_{2}$. Notice that the identities $\sigma_1$ and $\sigma_{2}$ are dual to each other.

It follows from \cite{OS1} that if $W$ consists of a single word with at most two non-linear variables and the monoid $S(W)$ is finitely based then $S(W)$ is contained either in $\var \{\sigma_{\mu}, \sigma_{1}\}$ or in $\var \{ \sigma_{\mu}, \sigma_{2}\}$ or in $\var \{ \sigma_{1}, \sigma_{2}\}$.

In \cite{MJ}, M. Jackson proved that $\var S(\{at_1abt_2b\})$ and  $\var S(\{abt_1at_2b, at_1bt_2ab\})$ are {\em  limit varieties}
in the sense that each of these varieties is NFB while each proper monoid subvariety of each of these varieties is FB.
In order to determine whether $\var S(\{at_1abt_2b\})$ and  $\var S(\{abt_1at_2b, at_1bt_2ab\})$ are the only limit varieties generated by finite aperiodic monoids with central idempotents, he suggested in \cite{MJ} to investigate the monoid subvarieties of  $\var \{\sigma_{\mu}, \sigma_1\}$ and dually, of $\var \{\sigma_{\mu}, \sigma_2\}$. In \cite{EL}, E. Lee proved that all finite aperiodic monoids with central idempotents contained in $\var \{\sigma_{\mu}, \sigma_1\}$ are finitely based. This result implies the affirmative answer to
the question of Jackson posed in \cite{MJ}. Later in \cite{EL1}, E. Lee proved that all monoids contained in $\var \{\sigma_{\mu}, \sigma_1\}$ are finitely based. This more general result implies that $\var S(\{at_1abt_2b\})$ and  $\var S(\{abt_1at_2b, at_1bt_2ab\})$ are the only limit varieties generated by aperiodic monoids with central idempotents.

In this article we present a method (see Lemma \ref{main} below) that can be used for proving that a semigroup is finitely based.
In sections 4 and 5 we use Lemma \ref{main} to find some sufficient conditions under which a monoid in $\var \{\sigma_1, \sigma_2\}$ and in $\var \{\sigma_\mu\}$ is finitely based.

Lemma \ref{main} allows to give a short proof of the result of Lee that every monoid contained in $\var \{\sigma_{\mu}, \sigma_1\}$ and in $\var \{\sigma_{\mu}, \sigma_2\}$ is finitely based (see Theorem \ref{fb3} below).   In contrast with the result of Lee, the finite basis property behaves in a complicated way in $\var \{\sigma_1, \sigma_2\}$. In particular, it is already complicated in the class of monoids of the form $A^1_0 \times S(W)$ where $A_0^1$ is the monoid obtained by adjoining an identity element to the semigroup $A_0= \langle a,b \mid aa=a, bb=b, ab=0 \rangle$ of order four and
$S(W)$ is contained in $\var \{\sigma_1, \sigma_2\}$ (See Example 7.4 in \cite{OS3} and Example \ref{chain} below). Nevertheless, Theorem \ref{alg} contains a simple algorithm for selecting finitely based monoids in this class.

In section 6, we give a simple description of the equational theories for each of the seven monoid varieties defined by the subsets of $\{\sigma_{\mu}, \sigma_{1}, \sigma_2\}$. We also show that the monoid varieties defined by $\{\sigma_{\mu}, \sigma_{1}, \sigma_2\}$ and by $\{\sigma_{1}, \sigma_2\}$
are generated by monoids of the form $S(W)$.

Surprisingly, Lemma \ref{main} works not only for monoids satisfying one of the identities in $\{\sigma_{\mu}, \sigma_{1}, \sigma_2\}$.
In section 7, we use Lemma \ref{main} to reprove the result of F. Blanchet-Sadri \cite{BS} that the equational theory $J_3$ (see the next section) of the monoid $S_4$ of all reflexive binary relations on a four-element set is finitely based.

\section{Preliminaries}

Throughout this article, elements of a countable alphabet $\mathfrak A$ are called {\em variables} and elements of the free semigroup $\mathfrak A^+$ are called {\em words}. If $\mathfrak X$ is a set of variables then we write ${\bf u}(\mathfrak X)$ to refer to the word obtained from ${\bf u}$ by deleting all occurrences of all variables that are not in $\mathfrak X$ and say that the word ${\bf u}$ {\em deletes} to the word ${\bf u}(\mathfrak X)$. If $\mathfrak X = \{y_1, \dots, y_k\} \cup \mathfrak Y$ for some variables $y_1, \dots, y_k$ and a set of variables $\mathfrak Y$ then instead of ${\bf u}(\{y_1, \dots, y_k\} \cup \mathfrak Y)$ we simply write ${\bf u}(y_1, \dots, y_k, \mathfrak Y)$.

We say that a set of identities $\Sigma$ is closed under deleting variables, if for each set of variables $\mathfrak X$, the set $\Sigma$ contains the identity
$\bf u (\mathfrak X) \approx \bf v (\mathfrak X)$ whenever $\Sigma$ contains an identity $\bf u \approx \bf v$.
We use $\Sigma^\delta$ to denote the closure of $\Sigma$ under deleting variables. For example, $\{\sigma_{\mu}\}^\delta = \{xt_1xyt_2y \approx xt_1yxt_2y, xxyt_2y \approx xyxt_2y, xt_1xyy \approx xt_1yxy, xxyy \approx xyxy \}$. If a semigroup $S$ satisfies all identities in a set $\Sigma$ then we write $S \models \Sigma$. If $S$ is a monoid then evidently, $S \models \Sigma$ if and only if $S \models \Sigma^\delta$.

A word ${\bf u}$ is said to be an isoterm \cite{P} for a semigroup $S$ if $S$ does not satisfy any nontrivial identity of the form ${\bf u} \approx {\bf v}$.
 A word that contains at most one
non-linear variable is called {\em almost-linear}.
An identity ${\bf u} \approx {\bf v}$ is called {\em almost-linear} if both words $\bf u$ and $\bf v$ are almost-linear. The set $\con({\bf u}) = \{x \in \mathfrak A \mid occ_{\bf u}(x)>0 \}$ of all variables contained in a word ${\bf u}$ is called the {\em content of ${\bf u}$}.
An identity ${\bf u} \approx {\bf v}$ is called {\em regular} if $\con({\bf u}) = \con({\bf v})$.

\begin{fact} \label{xy} If the word $xy$ is not an isoterm for a monoid $S$ and $S \models \sigma_{\mu}$ then $S$ is either finitely
based by some almost-linear identities or $S \models x \approx x^n$ for some $n>1$ and satisfies only regular identities.
\end{fact}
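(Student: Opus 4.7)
The plan is to use the hypothesis that $xy$ is not an isoterm to fix a nontrivial identity $xy \approx \mathbf{w}$ with $\mathbf{w}\not\equiv xy$, and then to exploit the monoid structure by substituting the identity element $1$ for variables. The main case-split is on whether $S$ satisfies any non-regular identity: the presence of one quickly collapses $S$ into an abelian group of bounded exponent, whereas its absence leaves only a short list of possibilities for $\mathbf{w}$.

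First, suppose $S$ satisfies some non-regular identity $\mathbf{u}\approx\mathbf{v}$. Choose a variable $z\in \con(\mathbf{u})\triangle\con(\mathbf{v})$ and substitute $1$ for every other variable; this produces a monoid identity $z^k\approx 1$ for some $k\ge 1$. Then $s\cdot s^{k-1}=1$ for every $s\in S$, so $S$ is a group of exponent dividing $k$. In any group, cancellation inside $\sigma_\mu$ (strip $xt_1$ from the left and then $y$ and $t_2$ from the right) forces $xy\approx yx$, so $S$ is an abelian group of exponent $k$, hence finitely based by the almost-linear identities $\{xy\approx yx,\ x^k z\approx z\}$. This is the first alternative of the fact.

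Now suppose every identity of $S$ is regular, so $\con(\mathbf{w})=\{x,y\}$. If some variable, say $x$, occurs at least twice in $\mathbf{w}$, substituting $y\mapsto 1$ gives $x\approx x^m$ with $m\ge 2$, and together with the regularity assumption this is the second alternative. Otherwise each of $x,y$ occurs exactly once in $\mathbf{w}$, forcing $\mathbf{w}=yx$ and making $S$ commutative. In the commutative sub-case, modulo $xy\approx yx$ every identity of $S$ may be written as $x_1^{a_1}\cdots x_k^{a_k}\approx x_1^{b_1}\cdots x_k^{b_k}$; substituting $1$ for all but one variable reduces this to the single-variable identities $x^{a_j}\approx x^{b_j}$, and conversely any identity all of whose coordinate projections hold in $S$ can be recovered from them by rewriting one coordinate at a time. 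By regularity the single-variable identities of $S$ have the form $x^a\approx x^b$ with $a,b\ge 1$, so they form a congruence on the positive integers, generated by one relation $x^N\approx x^{N+p}$. If $N=1$, then $S\models x\approx x^{1+p}$ and we land in the second alternative; if $N\ge 2$, then $\var S$ is finitely based by the almost-linear identities $\{xy\approx yx,\ x^N\approx x^{N+p}\}$, which is the first alternative. The one delicate point is the reduction of multi-variable commutative identities to their single-variable consequences, handled by the coordinate-by-coordinate rewriting just mentioned; this is the only non-routine ingredient in the whole argument.
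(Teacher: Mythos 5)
Your proof is correct and follows essentially the same route as the paper's: an irregular identity forces $S$ to be a group of finite exponent which $\sigma_{\mu}$ then makes abelian, and in the regular case the identity $xy \approx \mathbf{w}$ either yields $x \approx x^{n}$ by substituting $1$ for one variable or forces commutativity. Your handling of the commutative sub-case (projection to one-variable identities, the congruence generated by $x^{N} \approx x^{N+p}$, and coordinate-by-coordinate rewriting) is in fact more careful than the paper's, which simply asserts a basis of the form $\{x^{m} \approx x,\ xy \approx yx\}$ or $\{xy \approx yx\}$ and thereby glosses over monoids whose basis is, say, $\{xy \approx yx,\ x^{2} \approx x^{3}\}$.
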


\begin{proof}If $S$ satisfies an irregular identity then $S$ is a group with period $n>0$. Since $S$ satisfies the identity $xxyy \approx xyxy$, the group $S$ is finitely based by $\{y \approx x^ny \approx yx^n, xy \approx yx\}$. So, we may assume that $S$ satisfies only regular identities.

Since the word $xy$ is not an isoterm for $S$, the monoid $S$ satisfies a non-trivial identity of the form $xy \approx {\bf u}$.
Since $S$ satisfies only regular identities, we have that $\con({\bf u}) = \{x,y\}$.
If the length of the word $\bf u$ is 2 then $S$ is commutative and is finitely based by either $\{x^m \approx x, xy \approx yx\}$ for some $m>1$ or by $xy \approx yx$.
If the length of the word $\bf u$ is at least 3 then $S$ satisfies an identity $x \approx x^n$ for some $n>1$.
\end{proof}

\begin{lemma} \label{vol} \cite[Corollary 2]{MV1} Every set of almost-linear identities is finitely based.

\end{lemma}

\begin{lemma} \label{lee} \cite[Proposition 5.7]{EL1} Every set of identities that consists of $\{\sigma_1, \sigma_{\mu}\}^\delta$ and some identities
of the form

\begin{equation}\label{e1}
x^{\alpha_1}y^{\beta_1}t_1x^{\alpha_2}y^{\beta_2}t_2 \dots t_{n-1} x^{\alpha_n}y^{\beta_n}t_{n} x^{\alpha_{n+1}}y^{\beta_{n+1}}  \approx x^{\alpha_1}y^{\beta_1}t_1x^{\alpha_2}y^{\beta_2}t_2 \dots t_{n-1} x^{\alpha_n}y^{\beta_n}t_{n} y^{\beta_{n+1}}x^{\alpha_{n+1}}
\end{equation}
where  $n, \alpha_1, \beta_1, \dots , \alpha_n, \beta_n \ge 0$ and $\alpha_{n+1}, \beta_{n+1} >0$, is finitely based.

\end{lemma}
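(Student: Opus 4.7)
Since each of $\sigma_\mu$ and $\sigma_1$ involves just four variables, the deletion closure $\{\sigma_1,\sigma_\mu\}^\delta$ is a finite set. So, writing $\Sigma$ for the given collection of form-(\ref{e1}) identities, the task is to produce a finite subset $\Sigma' \subseteq \Sigma$ such that every member of $\Sigma$ is derivable from $\{\sigma_1,\sigma_\mu\}^\delta \cup \Sigma'$. My approach is a well-quasi-order (wqo) argument on the parameter space of form-(\ref{e1}) identities.

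Index each identity $\epsilon$ of form (\ref{e1}) by its tuple
\[p(\epsilon) = \bigl(n;\,(\alpha_1,\beta_1),\ldots,(\alpha_n,\beta_n),(\alpha_{n+1},\beta_{n+1})\bigr),\]
viewed as a finite sequence of pairs in $\mathbb{N}^2$ with a distinguished terminal pair (the entry that is swapped between the two sides). Define $p \preceq q$ to mean that the sequence for $p$ admits a monotone embedding into that for $q$, sending the terminal pair of $p$ to the terminal pair of $q$ and satisfying componentwise $\le$ at every matched index. Since $(\mathbb{N}^2,\le)$ is a wqo by Dickson's lemma, Higman's lemma yields that $\preceq$ is a wqo on this parameter space, so every subset thereof has finitely many $\preceq$-minimal elements.

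The heart of the plan is the \emph{derivation claim}: if $p \preceq q$, then $\epsilon_q$ is derivable from $\{\sigma_1,\sigma_\mu\}^\delta \cup \{\epsilon_p\}$. I would decompose the embedding $p \hookrightarrow q$ into elementary moves and verify each one: \emph{(i)}~appending or prepending a letter to $\epsilon_p$, which is immediate by multiplying both sides by that letter; \emph{(ii)}~inserting a new middle $t$-slot with zero exponents, realised by the substitution $t_i \mapsto t_i\,t$; and \emph{(iii)}~raising a single interior exponent $\alpha_i$ or $\beta_i$ (with $1<i\le n$) by one. Step (iii) is the delicate case and is handled by chaining the $\sigma_\mu$-deletions $xxyt_2y \approx xyxt_2y$ and $xt_1xyy \approx xt_1yxy$ together with the $\sigma_1$-deletion $xyxy \approx yxxy$, to shuffle one extra letter into the required interior position while leaving the terminal pair, and its swap, intact on both sides of the identity.

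Once the derivation claim is in hand, the wqo property of $\preceq$ guarantees that $\Sigma$ has only finitely many $\preceq$-minimal elements; taking $\Sigma'$ to be this finite collection completes the proof. The main obstacle is step (iii): on their own, the deletions of $\sigma_\mu$ and $\sigma_1$ implement only very specific local rearrangements, and it requires care to combine them into an ``increment of a single interior exponent''. I would handle this via an auxiliary induction on the distance of the modified slot from the terminal pair, peeling off one surrounding $t$-block at a time and applying the appropriate deletion inside the shrinking window.
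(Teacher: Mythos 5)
The paper does not actually prove this lemma---it is quoted from Lee \cite[Proposition 5.7]{EL1}---so I can only judge your argument on its own merits. Your skeleton (index the identities by their sequences of exponent pairs, get a well-quasi-order from Dickson plus Higman, and reduce everything to the finitely many $\preceq$-minimal members of the given set via a derivation claim) is the right kind of strategy for a statement of this shape, and the logic ``wqo $+$ derivation claim $\Rightarrow$ finitely based'' is sound. The problem is that your derivation claim is false for the order you define. Take $p=(0;(2,1))$ and $q=(0;(3,1))$, i.e.\ $\epsilon_p$ is $x^2y\approx yx^2$ and $\epsilon_q$ is $x^3y\approx yx^3$; then $p\preceq q$, but $\epsilon_q$ is \emph{not} derivable from $\{\sigma_1,\sigma_\mu\}^\delta\cup\{\epsilon_p\}$. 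Indeed $y$ is linear in these words, every consequence of $\{\sigma_1,\sigma_\mu\}^\delta$ is block-balanced and so cannot move the lone occurrence of $y$ at all, and a direct check of all factor replacements by substitution instances of $x^2y\approx yx^2$ shows the congruence class of $x^3y$ is just $\{x^3y,\,xyx^2\}$, which misses $yx^3$. This also exposes the conceptual gap in your step (iii): $\sigma_1$ and $\sigma_\mu$ are balanced, so chaining their deletions cannot ``shuffle one extra letter in''---the extra occurrence has to be created by a substitution (into a $t_i$, or into $x$ or $y$), and the subsequent rearrangement is only available when the variable being moved is non-linear. Your move (i) has the same defect at the terminal block: right-multiplying by $x$ forces you to carry ${}_{last}x$ past ${}_{last}y$ on the left-hand side, which is precisely the one adjacent swap that $\{\sigma_1,\sigma_\mu\}^\delta$ forbids (cf.\ Definition \ref{goodfact}(ii) and Corollary \ref{blbal2}(ii)); you must instead substitute $t_n\mapsto t_nx$ and then move a \emph{non-last} occurrence.

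The fix is a case split that your proposal omits. The identities of form (\ref{e1}) in which $\sum_i\alpha_i=1$ or $\sum_i\beta_i=1$ are almost-linear, and for these the embedding order is simply not sound for derivability (the derivable consequences are governed by numerical-semigroup-type relations, as the $x^2y\approx yx^2$ example shows); this subfamily should be split off and disposed of wholesale by Volkov's result, Lemma \ref{vol}. For the remaining identities both $x$ and $y$ are non-linear on both sides, and there your claim does hold: modulo $\{\sigma_1,\sigma_\mu\}^\delta$ each side is determined by its block contents together with the relative order of ${}_{last}x$ and ${}_{last}y$, the extra blocks and extra occurrences demanded by the embedding are introduced by substituting words for the $t_i$ (and by outer multiplication for the end blocks), and every rearrangement required is a swap of a $\{\sigma_1,\sigma_\mu\}$-good adjacent pair, which is available exactly because neither variable is linear. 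With that repair---and with the elementary moves rewritten so that occurrences are created by substitution rather than by $\sigma_1,\sigma_\mu$---your wqo argument goes through.
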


We use $_{i{\bf u}}x$ to refer to the $i^{th}$ from the left occurrence of $x$ in ${\bf u}$. We use $_{last {\bf u}}x$ to refer to the last occurrence of $x$ in ${\bf u}$.  The set $\ocs({\bf u}) = \{ {_{i{\bf u}}x} \mid x \in \mathfrak A, 1 \le i \le occ_{\bf u} (x) \}$ of all occurrences of all variables in ${\bf u}$ is called the {\em occurrence set of ${\bf u}$}. The word ${\bf u}$ induces a (total) order $<_{\bf u}$ on the set $\ocs({\bf u})$ defined by ${_{i{\bf u}}x} <_{\bf u} {_{j{\bf u}}y}$ if and only if the $i^{th}$ occurrence of $x$ precedes the $j^{th}$ occurrence of $y$ in ${\bf u}$.
If a pair $\{c,d\} \subseteq \ocs ({\bf u})$ is adjacent in ${\bf u}$ and $c <_{\bf u} d$ then we write $c \ll_{\bf u} d$.

An identity  ${\bf u} \approx {\bf v}$ is called {\em balanced} if for each variable $x \in \mathfrak A$ we have ${\bf u}(x) = {\bf v}(x)$.
If ${\bf u} \approx {\bf v}$ is a balanced identity then for each $x \in \mathfrak A$ and $1 \le i \le occ_{\bf u}(x)=occ_{\bf v}(x)$ we
identify $_{i{\bf u}}x \in \ocs({\bf u})$ and $_{i{\bf v}}x \in \ocs({\bf v})$. We say that a pair $\{c,d \} \subseteq \ocs({\bf u})$ is {\em unstable} in a balanced identity ${\bf u} \approx {\bf v}$ if $c <_{\bf u} d$ but $d <_{\bf v} c$.
We say that a pair $\{c,d \} \subseteq \ocs({\bf u})$ is {\em critical} in a balanced identity ${\bf u} \approx {\bf v}$ if $\{c,d \}$ is adjacent in $\bf u$ and
unstable in ${\bf u} \approx {\bf v}$. The next statement says that every non-trivial balanced identity contains a critical pair.

\begin{lemma} \label{crit} \cite[Lemma 3.2]{OS}
If  $\{c,d \} \subseteq \ocs({\bf u})$ is unstable in a balanced identity ${\bf u} \approx {\bf v}$ and $c <_{\bf u} d$ then for some
$\{p, q \} \subseteq \ocs({\bf u})$ we have that  $c \le_{\bf u} p \ll_{\bf u} q \le_{\bf u} d$ and $\{p, q \}$ is also unstable in ${\bf u} \approx {\bf v}$.
\end{lemma}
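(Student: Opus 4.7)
The plan is to argue by a discrete intermediate value principle along the unique maximal chain of adjacent occurrences in $\bf u$ between $c$ and $d$, and to extract an adjacent unstable pair by noting that the global instability $d <_{\bf v} c$ cannot be accommodated if every adjacent link is stable in $\bf v$.

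First, I would list the occurrences of $\bf u$ that lie weakly between $c$ and $d$ with respect to $<_{\bf u}$ as
\[
c = o_0 \ll_{\bf u} o_1 \ll_{\bf u} \cdots \ll_{\bf u} o_k = d,
\]
where each $o_i \in \ocs({\bf u})$ and $\{o_i, o_{i+1}\}$ is adjacent in $\bf u$. This chain exists and is uniquely determined because $\ll_{\bf u}$ is the covering relation of the total order $<_{\bf u}$ on $\ocs({\bf u})$. Since ${\bf u} \approx {\bf v}$ is balanced, each $o_i$ is also an element of $\ocs({\bf v})$.

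Next, I would use the hypothesis that $\{c, d\}$ is unstable, i.e.\ $o_k = d <_{\bf v} c = o_0$. Suppose toward contradiction that every adjacent pair $\{o_i, o_{i+1}\}$ in the chain is stable in ${\bf u} \approx {\bf v}$; stability together with $o_i <_{\bf u} o_{i+1}$ gives $o_i <_{\bf v} o_{i+1}$ for each $0 \le i \le k-1$. Transitivity of $<_{\bf v}$ then yields $o_0 <_{\bf v} o_k$, contradicting $o_k <_{\bf v} o_0$. Hence there exists some $i$ with $o_{i+1} <_{\bf v} o_i$, so $\{o_i, o_{i+1}\}$ is unstable in ${\bf u} \approx {\bf v}$; setting $p = o_i$ and $q = o_{i+1}$ produces a pair with $c \le_{\bf u} p \ll_{\bf u} q \le_{\bf u} d$ that is unstable, as required.

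There is no real obstacle here; the only subtlety is making sure the chain $o_0, \dots, o_k$ is well-defined and that the transitivity step is applied in $<_{\bf v}$ rather than $<_{\bf u}$, which is where the balanced hypothesis enters by identifying the occurrence sets of $\bf u$ and $\bf v$.
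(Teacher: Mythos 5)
Your proof is correct. The paper itself gives no proof of this lemma --- it is imported by citation from \cite[Lemma 3.2]{OS} --- and your argument (walk along the covering chain $c = o_0 \ll_{\bf u} o_1 \ll_{\bf u} \dots \ll_{\bf u} o_k = d$ and observe that if every adjacent link satisfied $o_i <_{\bf v} o_{i+1}$ then transitivity of $<_{\bf v}$ would force $c <_{\bf v} d$, contradicting instability) is the standard and essentially only natural derivation. The one point worth being explicit about, which you do address, is that ``stable'' for an adjacent pair with $o_i <_{\bf u} o_{i+1}$ really does yield $o_i <_{\bf v} o_{i+1}$ because $<_{\bf v}$ is a total order on the identified occurrence set; with that noted, the argument is complete.
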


We say that a word ${\bf u} = x_1 x_2 \dots x_k$ is a {\em scattered subword} of a word ${\bf v}$ whenever there exist words ${\bf v}_0, {\bf v}_1, \dots,  {\bf v}_{k-1}, {\bf v}_k \in \mathfrak A^*$ such that ${\bf v} = {\bf v}_0 x_1 {\bf v}_1 x_2 {\bf v}_2 \dots {\bf v}_{k-1}x_k{\bf v}_k$;
in other terms, this means that one can extract $\bf u$ treated as a sequence of letters $x_1, x_2, \dots ,x_k$ from the sequence $\bf v$.

We denote by $J_m$ the set of all identities $({\bf u} \approx {\bf v})$ such that the words $\bf u$ and $\bf v$ have the same set of scattered subwords of length $\le m$. For each $n>1$ we use $S_n$ to denote the monoid of all reflexive binary relations on a set with $n$ elements.
In \cite{MV2}, M. Volkov proved that for each $m>0$, $J_m$ is the equational theory of $S_{m+1}$ and of several other interesting monoids (see also \cite{AV}).
In view of the famous Eilenberg correspondence (\cite{El}, see also \cite{Pin}), Theorem 2 in \cite{MV2} says that for each $m>0$ the monoid $S_{m+1}$ generates the pseudovariety of piecewise $m$-testable languages.

\section {A method for proving that a semigroup is finitely based}

We use $\Sigma \vdash \Delta$ to indicate that every identity in $\Delta$ can be derived from some identities in $\Sigma$.
The cardinality of a set $X$ is denoted by $|X|$.

\begin{lemma} \label{main} Let $\Sigma$ be a set of identities.

Let $\mathcal P$ and $\mathcal Q$ be some properties of identities such that the property $\mathcal Q$ is stronger than $\mathcal P$.
Let $\dis (\mathcal P \rightarrow \mathcal Q)$ be a function which associates with each $\mathcal P$-identity ${\bf u} \approx {\bf v}$ a set $\dis (\mathcal P \rightarrow \mathcal Q)({\bf u} \approx {\bf v})$
so that the set $\dis (\mathcal P \rightarrow \mathcal Q)({\bf u} \approx {\bf v})$ is empty
if and only if ${\bf u} \approx {\bf v}$ is a $\mathcal Q$-identity.

Suppose that for each $\mathcal P$-identity  ${\bf u} \approx {\bf v}$
which is not a $\mathcal Q$-identity, one can find a $\mathcal P$-identity  ${\bf u_1} \approx {\bf v_1}$ such that
 $\Sigma \vdash \{{\bf u} \approx {\bf u_1}, {\bf v} \approx {\bf v_1}\}$ and $|\dis (\mathcal P \rightarrow \mathcal Q)({\bf u_1} \approx {\bf v_1})| < |\dis (\mathcal P \rightarrow \mathcal Q)({\bf u} \approx {\bf v})|$.

  Then every $\mathcal P$-identity can be derived from $\Sigma$ and from a $\mathcal Q$-identity.
\end{lemma}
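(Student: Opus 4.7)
The plan is to prove the statement by strong induction on the cardinality $N = |\dis(\mathcal P \to \mathcal Q)(\mathbf{u} \approx \mathbf{v})|$ of the ``distance'' set associated with a $\mathcal P$-identity $\mathbf{u} \approx \mathbf{v}$. The function $\dis(\mathcal P \to \mathcal Q)$ is designed precisely so that it vanishes exactly on $\mathcal Q$-identities, which gives a natural well-founded measure to induct on.

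For the base case $N = 0$, the hypothesis on $\dis(\mathcal P \to \mathcal Q)$ forces $\mathbf{u} \approx \mathbf{v}$ itself to be a $\mathcal Q$-identity, so it is trivially derivable from (itself as) a $\mathcal Q$-identity together with $\Sigma$. For the inductive step, assume the conclusion holds for every $\mathcal P$-identity of smaller $\dis$-cardinality, and let $\mathbf{u} \approx \mathbf{v}$ be a $\mathcal P$-identity that is not a $\mathcal Q$-identity. By hypothesis we may produce a $\mathcal P$-identity $\mathbf{u_1} \approx \mathbf{v_1}$ with $|\dis(\mathcal P \to \mathcal Q)(\mathbf{u_1} \approx \mathbf{v_1})| < N$ and $\Sigma \vdash \{\mathbf{u} \approx \mathbf{u_1}, \mathbf{v} \approx \mathbf{v_1}\}$. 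The inductive hypothesis applied to $\mathbf{u_1} \approx \mathbf{v_1}$ furnishes a $\mathcal Q$-identity $\mathbf{p} \approx \mathbf{q}$ such that $\Sigma \cup \{\mathbf{p} \approx \mathbf{q}\} \vdash \mathbf{u_1} \approx \mathbf{v_1}$. Chaining the derivations via transitivity and symmetry,
\[
\mathbf{u} \;\approx\; \mathbf{u_1} \;\approx\; \mathbf{v_1} \;\approx\; \mathbf{v},
\]
we see that $\Sigma \cup \{\mathbf{p} \approx \mathbf{q}\} \vdash \mathbf{u} \approx \mathbf{v}$, which closes the induction.

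There is essentially no combinatorial obstacle here: the lemma is a clean inductive skeleton, and all the genuine work is pushed into whichever concrete instances of $\mathcal P$, $\mathcal Q$, and $\dis(\mathcal P \to \mathcal Q)$ one later supplies when building the rewriting step $\mathbf{u} \approx \mathbf{v} \leadsto \mathbf{u_1} \approx \mathbf{v_1}$. The only subtlety worth flagging is that the produced identity $\mathbf{u_1} \approx \mathbf{v_1}$ must again be a $\mathcal P$-identity (not merely a balanced or arbitrary one), since otherwise the inductive hypothesis would not apply; this is why $\mathcal P$ is required to be closed under the rewriting moves that the user of the lemma will later devise. Once this is observed, the proof reduces to verifying the three-step chain above, which is a one-line application of the transitivity of $\vdash$.
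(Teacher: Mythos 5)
Your proof is correct and is essentially the paper's argument: the paper simply unrolls your strong induction on $|\dis(\mathcal P \rightarrow \mathcal Q)({\bf u} \approx {\bf v})|$ into an explicit iteration that terminates after at most that many steps, producing the same chain ${\bf u} \approx {\bf u_1} \approx \dots \approx {\bf u_k} \approx {\bf v_k} \approx \dots \approx {\bf v_1} \approx {\bf v}$. Your remark that ${\bf u_1} \approx {\bf v_1}$ must again be a $\mathcal P$-identity is the right subtlety to flag, and it is exactly what the lemma's hypothesis guarantees.
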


\begin{proof} Let ${\bf u} \approx {\bf v}$ be an arbitrary $\mathcal P$-identity. If ${\bf u} \approx {\bf v}$ is not a $\mathcal Q$-identity then
the set $\dis (\mathcal P \rightarrow \mathcal Q)({\bf u} \approx {\bf v})$ is not empty. By our assumption, one can find
a $\mathcal P$-identity  ${\bf u_1} \approx {\bf v_1}$ such that
 $\Sigma \vdash \{{\bf u} \approx {\bf u_1}, {\bf v} \approx {\bf v_1}\}$ and $|\dis (\mathcal P \rightarrow \mathcal Q)({\bf u_1} \approx {\bf v_1})| < |\dis (\mathcal P \rightarrow \mathcal Q)({\bf u} \approx {\bf v})|$.

 If the set $\dis (\mathcal P \rightarrow \mathcal Q)({\bf u_1} \approx {\bf v_1})$ is empty then we are done. Otherwise, by our assumption,
  one can find
a $\mathcal P$-identity  ${\bf u_2} \approx {\bf v_2}$ such that
 $\Sigma \vdash \{{\bf u_1} \approx {\bf u_2}, {\bf v_1} \approx {\bf v_2}\}$ and $|\dis (\mathcal P \rightarrow \mathcal Q)({\bf u_2} \approx {\bf v_2})| < |\dis (\mathcal P \rightarrow \mathcal Q)({\bf u_1} \approx {\bf v_1})|$.

 By repeating this procedure $k  \le |\dis (\mathcal P \rightarrow \mathcal Q)({\bf u} \approx {\bf v})|$ times, we obtain a
 $\mathcal P$-identity  ${\bf u_k} \approx {\bf v_k}$
 such that the set $\dis (\mathcal P \rightarrow \mathcal Q)({\bf u_k} \approx {\bf v_k})$ is empty. This means that the identity ${\bf u_k} \approx {\bf v_k}$
 has Property $\mathcal Q$. The sequence ${\bf u} \approx {\bf u_1} \approx  {\bf u_2} \approx \dots  \approx {\bf u_{k-1}} \approx {\bf u_k} \approx {\bf v_k} \approx {\bf v_{k-1}} \approx \dots \approx {\bf v_2} \approx {\bf v_1} \approx  {\bf v}$ gives us a derivation of ${\bf u} \approx {\bf v}$ from $\Sigma$ and from the $\mathcal Q$-identity ${\bf u_k} \approx {\bf v_k}$.
 \end{proof}

If some variable $x$ occurs $n \ge 0$ times in a word ${\bf u}$ then we write $occ_{\bf u}(x)=n$ and say that $x$ is {\em $n$-occurring} in ${\bf u}$.
  We use letter $t$ with or without subscripts to denote linear (1-occurring) variables. If we use letter $t$ several times in a word,  we assume that different occurrences of $t$ represent distinct linear variables.

  For each $n>0$ we define $\con_n({\bf u})= \{x \in \mathfrak A \mid 0 < occ_{\bf u}(x) \le n \}$. We use $\lin({\bf u})$ to denote the set $\con_1({\bf u})$ of all linear variables in $\bf u$. We use $\non({\bf u})$ to denote the set of all non-linear variables in $\bf u$. Evidently, $\con({\bf u}) = \lin({\bf u}) \cup \non({\bf u})$.

For each $n>0$ an identity  ${\bf u} \approx {\bf v}$ is called a {\em ${\mathcal P}_{n}$-identity} if it is regular and ${\bf u}(\con_n({\bf u})) = {\bf v}(\con_n({\bf u}))$. In particular, an identity ${\bf u} \approx {\bf v}$ is a $\mathcal P _1$-identity if and only if $\lin({\bf u}) = \lin({\bf v})$,  $\non({\bf u}) = \non({\bf u})$ and the order of linear letters is the same in $\bf u$ and $\bf v$. An identity ${\bf u} \approx {\bf v}$ is called {\em block-balanced} if for each variable $x \in \mathfrak A$, we have ${\bf u}(x, \lin({\bf u})) = {\bf v}(x, \lin({\bf u}))$.

A {\em block} of a word ${\bf u}$ is a maximal subword of ${\bf u}$ that does not contain any linear letters of ${\bf u}$.
Evidently, an identity ${\bf u} \approx {\bf v}$ is block-balanced if and only if it is a balanced ${\mathcal P}_1$-identity and each block in $\bf u$
is a permutation of the corresponding block in $\bf v$.
For each $\mathcal P_1$-identity ${\bf u} \approx {\bf v}$ we define

$\bullet$ $\dis (\mathcal P_1 \rightarrow$ block-balanced)$({\bf u} \approx {\bf v}): = \{x \in \con({\bf u}) \mid {\bf u}(x, \lin({\bf u})) \ne {\bf v} (x, \lin({\bf u}))\}$.

It is easy to see that the set $\dis (\mathcal P_1 \rightarrow$ block-balanced)$({\bf u} \approx {\bf v})$ is empty if and only if
${\bf u} \approx {\bf v}$ is a block-balanced identity.

If $x$ is a non-linear variable in a word $\bf u$ then we say that $\bf u$ is {\em $x$-compact} if all occurrences of $x$ are collected together in each block of $\bf u$. For example, the word $xxytyxy$ is $x$-compact while the word $xyyx$ is not.
Now we illustrate how to use Lemma \ref{main}.

\begin{lemma} \label{albb} If a monoid $S$ satisfies the identities $\{\sigma_1, \sigma_{\mu}\}$ then
every identity of $S$ can be derived from
 some almost-linear identities and block-balanced identities of $S$.
\end{lemma}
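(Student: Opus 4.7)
The plan is to apply Lemma \ref{main} with $\Sigma$ equal to $\{\sigma_\mu, \sigma_1\}^\delta$ together with all almost-linear identities of $S$, property $\mathcal{P}$ being ``${\mathcal P}_1$-identity of $S$'', property $\mathcal{Q}$ being ``block-balanced identity of $S$'', and distance function $\dis({\mathcal P}_1 \to \text{block-balanced})$ as defined immediately above the statement. A direct check confirms that every identity in $\{\sigma_\mu, \sigma_1\}^\delta$ is block-balanced (each merely permutes letters inside a single block), so $\Sigma$ consists only of almost-linear and block-balanced identities of $S$, matching the desired conclusion.

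Before invoking Lemma \ref{main}, I would reduce to the case of a ${\mathcal P}_1$-identity. If ${\bf u}\approx{\bf v}$ is irregular then $xy$ is not an isoterm for $S$, and Fact \ref{xy} supplies a finite almost-linear basis for $S$, finishing the proof at once. Otherwise ${\bf u}\approx{\bf v}$ is regular, and sending well-chosen non-linear variables to the identity element of $S$ produces almost-linear identities of $S$ (of the form $x\approx x^n$ or identities involving only linear letters) that can be used to rewrite ${\bf u}$ and ${\bf v}$ into a ${\mathcal P}_1$-identity.

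Next I would carry out the inductive step demanded by Lemma \ref{main}. Given a non-block-balanced ${\mathcal P}_1$-identity ${\bf u}\approx{\bf v}$ of $S$, pick $x\in \dis({\mathcal P}_1\to \text{block-balanced})({\bf u}\approx{\bf v})$. Using $\sigma_\mu$, $\sigma_1$ and their $\delta$-closures, I would first rewrite ${\bf u}$ into an $x$-compact word ${\bf u}^* = D_0 s_1 D_1 \cdots s_n D_n$ with $D_i=x^{a_i}C_i$ and $C_i$ free of $x$. Since $S$ is a monoid, deleting every non-linear variable other than $x$ from ${\bf u}\approx{\bf v}$ yields the almost-linear identity ${\bf u}(x,\lin({\bf u})) \approx {\bf v}(x,\lin({\bf u}))$ of $S$, which lies in $\Sigma$. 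Substituting the $x$-free fragments $C_i$ as padding around the linear letters $s_i$ turns this almost-linear identity into ${\bf u}^* \approx {\bf u}_1$, where ${\bf u}_1:=x^{b_0}C_0 s_1 x^{b_1}C_1 \cdots s_n x^{b_n}C_n$ and $(b_0,\dots,b_n)$ is the sequence of $x$-exponents of ${\bf v}(x,\lin({\bf u}))$. Setting ${\bf v}_1:={\bf v}$ we obtain $\Sigma\vdash\{{\bf u}\approx {\bf u}_1,\ {\bf v}\approx {\bf v}_1\}$ and ${\bf u}_1(x,\lin({\bf u}_1))={\bf v}(x,\lin({\bf u}))$, so $x$ leaves the distance set. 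No new variable enters it, since neither the $x$-compacting step nor the substitution alters ${\bf u}(y,\lin({\bf u}))$ for any $y\ne x$. The distance therefore strictly decreases, and Lemma \ref{main} delivers the desired decomposition.

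The principal obstacle is verifying the claim used above that $\{\sigma_\mu,\sigma_1\}^\delta$ really suffices to rewrite every word into an $x$-compact form. This reduces to showing that any two adjacent distinct non-linear letters inside any block can be swapped, for which one needs a case analysis on the contextual position of the block (initial, middle, or terminal relative to the linear letters) and on the occurrence profile of the two letters being swapped, since $\sigma_\mu$ requires flanking occurrences on both sides of the block while $\sigma_1$ requires them only on the right. Once this combinatorial fact is established, the rest of the argument is routine bookkeeping inside Lemma \ref{main}.
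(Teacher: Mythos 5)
Your proposal follows essentially the same route as the paper's proof: the same $\Sigma$ (all almost-linear identities of $S$ together with $\{\sigma_1,\sigma_\mu\}^\delta$), the same pair of properties ($\mathcal P_1$ and block-balanced), the same distance function, and the same inductive step --- make ${\bf u}$ $x$-compact and then apply the almost-linear identity ${\bf u}(x,\lin({\bf u}))\approx{\bf v}(x,\lin({\bf u}))$ of $S$, which removes $x$ from the distance set without adding any other variable. Your reduction to $\mathcal P_1$-identities is a little more improvised than the paper's (which simply splits on whether $xy$ is an isoterm and invokes Fact \ref{xy}), but it works.

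The one point you should correct is in your final paragraph: it is \emph{not} true that any two adjacent distinct non-linear letters inside a block can be swapped using $\{\sigma_1,\sigma_\mu\}^\delta$. A pair consisting of the last occurrence of $x$ and the last occurrence of $y$ cannot be swapped by these identities --- that is precisely the swap $\sigma_2$ would provide, and it is exactly the $\{\sigma_1,\sigma_\mu\}$-bad case of Definition \ref{goodfact}(ii). If you tried to prove the general swapping claim you have stated, you would fail. Fortunately the compactification never needs it: you only move \emph{non-last} occurrences of $x$ to the right, so every pair you swap contains a non-last occurrence; then $\sigma_1$ applies when the right-hand neighbour is also a non-last occurrence, and $\sigma_\mu$ applies when it is a non-first (in particular, a last) occurrence. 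With the claim restated in that weaker, correct form, your argument coincides with the paper's.
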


 \begin{proof}  Let $\Sigma$ denote the set of all almost-linear identities of $S$ together with $\{\sigma_1, \sigma_{\mu}\}^\delta$.
Let ${\bf u} \approx {\bf v}$ be a $\mathcal P_{1}$-identity of $S$ which is not block-balanced.

 Since ${\bf u} \approx {\bf v}$ is a $\mathcal P_{1}$-identity, we
 have  $\lin({\bf u}) = \lin({\bf v})$ and $\non({\bf u}) = \non({\bf v})$. Since ${\bf u} \approx {\bf v}$ is not block-balanced,
 for some variable $x$ such that $occ_{\bf u} (x) >1$ the identity ${\bf u}(x, \lin({\bf u})) \approx {\bf v} (x, \lin({\bf u}))$ is non-trivial.
 We may assume that $\bf u$ is $x$-compact. (Otherwise, by using the identities in  $\{\sigma_1, \sigma_{\mu}\}^\delta$ and swapping the adjacent occurrences of variables, we move some non-last occurrences of $x$ to the right until we obtain an $x$-compact word).

Since $\bf u$ is $x$-compact, the word ${\bf u}(x, \lin({\bf u}))$ is applicable to ${\bf u}$. So, for some word ${\bf u_1}$ we have
${\bf u}(x, \lin({\bf u})) \approx {\bf v}(x, \lin({\bf u})) \vdash {\bf u} \approx {\bf u_1}$. Notice that ${\bf u_1}(x, \lin({\bf u})) = {\bf v} (x, \lin({\bf u}))$.
This means that $|\dis (\mathcal P_1 \rightarrow$ block-balanced)$({\bf u_1} \approx {\bf v})| < |\dis (\mathcal P_1 \rightarrow$ block-balanced)$({\bf u} \approx {\bf v})|$.

 Lemma \ref{main} implies that every $\mathcal P_1$-identity of $S$ can be derived from $\Sigma$ and from some block-balanced identity of $S$.
 Since both $\sigma_1$ and $\sigma_\mu$ are block-balanced identities, every $\mathcal P_1$-identity of $S$ can be derived from
 some almost-linear and block-balanced identities of $S$.

 If the word $xy$ is an isoterm for $S$, then every identity of $S$ satisfies property ${\mathcal P}_{1}$.
 If the word $xy$ is not an isoterm for $S$, then in view of Fact \ref{xy}, we may assume that $S \models x \approx x^n$ for some $n>1$ and satisfies only regular identities. Then by using the identity $x \approx x^n$, one can transform every word $\bf p$ into a word $\bf u$
 so that each variable is non-linear in $\bf u$. This means that every identity of $S$ can be derived from $x \approx x^n$ and from a ${\mathcal P}_1$-identity of $S$.
Consequently, every identity of $S$ can be derived from
 some almost-linear identities and block-balanced identities of $S$.
 \end{proof}

For each balanced identity ${\bf u} \approx {\bf v}$ we define

$\bullet$ $\dis($balanced $\rightarrow$ trivial)$({\bf u} \approx {\bf v}): = \{ \{c,d\} \mid  c,d \in \ocs({\bf u}), c <_{\bf u} d, d <_{\bf v} c \}$.

In other words, $\dis($balanced $\rightarrow$ trivial)$({\bf u} \approx {\bf v})$ is the set of all unstable pairs in a balanced identity ${\bf u} \approx {\bf v}$.
It is easy to see that the set $\dis($balanced $\rightarrow$ trivial)$({\bf u} \approx {\bf v})$ is empty if and only if ${\bf u} \approx {\bf v}$
is a trivial identity.
To be consistent with \cite{JS, OS} we will write $\ch({\bf u} \approx {\bf v})$ instead of $\dis($balanced $\rightarrow$ trivial)$({\bf u} \approx {\bf v})$.
The following theorem illustrates how to use this  function.
It can also be easily deduced from Proposition 4.1 in \cite{EL1}.

\begin{theorem} \label{LL1} Every monoid $S$ which satisfies $\{\sigma_1, \sigma_\mu, \sigma_2\}$ is finitely based by $\{\sigma_1, \sigma_\mu, \sigma_2\}^\delta$ and some almost-linear identities.
\end{theorem}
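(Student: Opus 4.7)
The plan is to invoke Lemma \ref{albb} and then apply Lemma \ref{main} a second time, now with the chaos function $\ch$. Since $S$ satisfies $\{\sigma_1, \sigma_\mu\}$, Lemma \ref{albb} reduces the task to deriving every block-balanced identity of $S$ from $\{\sigma_1, \sigma_\mu, \sigma_2\}^\delta$, because the almost-linear identities of $S$ are finitely based by Lemma \ref{vol}.

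For the block-balanced identities I would apply Lemma \ref{main} with $\mathcal P$ equal to block-balanced, $\mathcal Q$ equal to trivial, $\dis = \ch$, and $\Sigma = \{\sigma_1, \sigma_\mu, \sigma_2\}^\delta$. Given a non-trivial block-balanced identity ${\bf u} \approx {\bf v}$, Lemma \ref{crit} furnishes a critical pair $\{p, q\} \subseteq \ocs({\bf u})$ with $p \ll_{\bf u} q$. Because linear letters occupy the same positions in $\bf u$ and $\bf v$ for any $\mathcal P_1$-identity, no occurrence of a linear variable can belong to an unstable pair, so $p$ and $q$ must lie in a single block of $\bf u$ and both be occurrences of non-linear variables, say of $x$ and $y$ respectively. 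Since two occurrences of the same variable are never unstable, $x \neq y$.

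The key step is to produce $\bf u_1$ from $\bf u$ by swapping just the two adjacent letters at positions $p$ and $q$, using a substitution instance of an identity in $\{\sigma_1, \sigma_\mu, \sigma_2\}^\delta$. Let $a, b$ denote the number of $x$-occurrences in $\bf u$ strictly before $p$ and strictly after $q$, and let $c, d$ be the corresponding counts for $y$; non-linearity gives $a + b \ge 1$ and $c + d \ge 1$. A case analysis shows that one of the following instances always applies: $\sigma_\mu$ with $X \mapsto x, Y \mapsto y$ (when $a, d \ge 1$); the same identity with $X \mapsto y, Y \mapsto x$ (when $b, c \ge 1$); $\sigma_1$ or its reverse, depending on the relative order of the first $x$- and first $y$-occurrence after $q$ (when $b, d \ge 1$); and dually $\sigma_2$ or its reverse (when $a, c \ge 1$). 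Verifying that the negations of these four enabling conditions cannot simultaneously hold together with $a+b\ge 1$ and $c+d\ge 1$ is elementary. Since swapping two adjacent letters changes the relative order of exactly one pair of occurrences, $\bf u_1$ remains block-balanced with $\bf v$ and $|\ch({\bf u_1} \approx {\bf v})| = |\ch({\bf u} \approx {\bf v})| - 1$.

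Lemma \ref{main} then produces a derivation of every block-balanced identity of $S$ from $\{\sigma_1, \sigma_\mu, \sigma_2\}^\delta$, and combining this with the finite basis for the almost-linear identities of $S$ yields the theorem. The main obstacle is the case analysis above; the scenarios that force both $\sigma_1$ and $\sigma_2$ into the basis are precisely the dual cases where the critical pair consists of the first (respectively, last) occurrence of both $x$ and $y$ in $\bf u$, where neither $\sigma_\mu$ nor its mirror instance is applicable.
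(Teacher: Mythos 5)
Your proposal is correct and follows essentially the same route as the paper: reduce to block-balanced identities via Lemma \ref{albb}, then apply Lemma \ref{main} with the function $\ch$ and eliminate a critical pair at each step by a swap using an identity from $\{\sigma_1, \sigma_\mu, \sigma_2\}^\delta$ (the paper's Claim \ref{bb}), finishing with Lemma \ref{vol} for the almost-linear part. The only difference is that you spell out the case analysis on which of $\sigma_\mu$, $\sigma_1$, $\sigma_2$ performs the swap, a detail the paper leaves implicit, and your analysis is accurate.
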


\begin{proof} The following statement is reversed in Proposition \ref{blbalanced}.

\begin{claim} \label{bb} Every block-balanced identity can be derived from $\{\sigma_1, \sigma_\mu, \sigma_2\}^\delta$.
\end{claim}

\begin{proof}
Let ${\bf u} \approx {\bf v}$ be a non-trivial block-balanced identity.
Since ${\bf u} \approx {\bf v}$ is non-trivial, it contains an unstable pair  $\{c,d \} \subseteq \ocs({\bf u})$. In view of Lemma \ref{crit}, we may assume that $c \ll_{\bf u} d$. Since the identity ${\bf u} \approx {\bf v}$ is block-balanced, both $c$ and $d$ are occurrences of some non-linear variables $x \ne y$.
We swap $c$ and $d$ in $\bf u$ by using an identity
in $\{\sigma_1, \sigma_\mu, \sigma_{2}\}^\delta$ and obtain a word ${\bf u_1}$. Evidently, $|\ch({\bf u_1} \approx {\bf v})| < |\ch({\bf u} \approx {\bf v})|$.
Lemma \ref{main} implies that every block-balanced identity can be derived from $\{\sigma_1, \sigma_\mu, \sigma_2\}^\delta$.
\end{proof}

Lemma \ref{albb}, Claim \ref{bb} and the result of Volkov (Lemma \ref{vol}) immediately imply that $S$ is
finitely based by $\{\sigma_1, \sigma_\mu, \sigma_2\}^\delta$ and some almost-linear identities.
\end{proof}

Now we use Lemma \ref{main} to obtain an accelerated tool for proving that a semigroup is finitely based.

An {\em assignment of Types from 1 to $n$} is a collection of functions $\{T_{\bf u} \mid {\bf u} \in \mathfrak A^+\}$ such that
for each word $\bf u$,  $T_{\bf u}$ assigns values in  $\{1,2, \dots,n\}$ to some pairs of occurrences of distinct variables in $\bf u$.

 If ${\bf u} \approx {\bf v}$ is a balanced identity then $l_{{\bf u}, {\bf v}}$ is a bijection from  $\ocs({\bf u})$ to  $\ocs({\bf v})$
 defined by $l_{{\bf u}, {\bf v}} (_{i{\bf u}}x) = {_{i{\bf v}}x}$. We say that a property $\mathcal P$ of identities is {\em transitive} if an identity ${\bf u} \approx {\bf v}$ satisfies $\mathcal P$ whenever both ${\bf u} \approx {\bf w}$ and ${\bf w} \approx {\bf v}$ satisfy $\mathcal P$.
Let $\mathcal P$ be a transitive property of identities which is at least as strong as the property of being a balanced identity.
We say that an assignment of Types is {\em $\mathcal P$-compatible} if it satisfies the following:

(i) if for some $c \ne d \in \ocs({\bf u})$ the pair $\{c,d\}$ is unstable in some $\mathcal P$-identity ${\bf u} \approx {\bf v}$,
then $T_{\bf u}(\{c,d\})$ is defined;

(ii) for each $\mathcal P$-identity ${\bf u} \approx {\bf v}$ and for each $c \ne d \in \ocs({\bf u})$
we have $T_{\bf u}(\{c,d\}) = T_{\bf v}(\{l_{{\bf u}, {\bf v}}(c),l_{{\bf u}, {\bf v}}(d)\})$.

 Each $\mathcal P$-compatible assignment of Types induces a function on $\mathcal P$-identities.
We say that a $\mathcal P$-identity ${\bf u} \approx {\bf v}$ is {\em of Type $k$} if $k$ is the maximal number such that the identity ${\bf u} \approx {\bf v}$ contains an unstable pair of Type $k$. If ${\bf u} \approx {\bf v}$ does not contain any unstable pairs (i.e. trivial) then we say that ${\bf u} \approx {\bf v}$ is of Type $0$.

\begin{lemma}  \label{chaos2} Let $\mathcal P$ be a transitive property of identities which is at least as strong as the property of being a balanced identity and
 $\Delta$ be a set of identities. Suppose that one can find a $\mathcal P$-compatible assignment of Types from 1 to $n$ so that for each $1 \le i \le n$, if a $\mathcal P$-identity ${\bf u} \approx {\bf v}$ contains a critical pair $\{c,d \} \subseteq \ocs({\bf u})$ of Type $i$ then one can
find a word $\bf w$ such that

(i) $\Delta \vdash {\bf u} \approx {\bf w}$;

(ii) ${\bf w} \approx {\bf v}$ is a $\mathcal P$-identity;

(iii) the pair $\{c,d \}$ is stable in ${\bf w} \approx {\bf v}$;

(iv) each pair of Type $\ge i$ is stable in ${\bf w} \approx {\bf v}$ whenever it is stable in ${\bf u} \approx {\bf v}$.

Then every $\mathcal P$-identity can be derived from $\Delta$.
\end{lemma}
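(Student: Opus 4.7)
The plan is to invoke Lemma \ref{main} directly, with $\Sigma = \Delta$, with $\mathcal{Q}$ being the property of being a trivial identity, and with a distance function $\dis$ whose cardinality encodes the lexicographic order on the tuple $(N_n, N_{n-1}, \dots, N_1)$, where $N_k({\bf u} \approx {\bf v})$ counts the Type $k$ unstable pairs in ${\bf u} \approx {\bf v}$. Since $\mathcal{P}$ is stronger than balanced, both ${\bf u}$ and ${\bf v}$ share the same occurrence set, so a uniform bound $M > N_k$ (valid for every $k$) can be chosen as a function of $|\ocs({\bf u})|$, say $M = |\ocs({\bf u})|^2 + 1$; any reduction we perform preserves this bound since $\mathcal{P}$-related words have the same length. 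I then define $\dis({\bf u} \approx {\bf v})$ as a concrete set of cardinality $\sum_{k=1}^n N_k \cdot M^{k-1}$, for instance the disjoint union over $k \in \{1,\dots,n\}$ of $M^{k-1}$ labelled copies of each Type $k$ unstable pair. Then $|\dis| = 0$ is equivalent to every $N_k$ vanishing, which for a balanced identity forces ${\bf u} = {\bf v}$; and a strict drop in $|\dis|$ is equivalent to a strict lex drop of $(N_n, \dots, N_1)$.

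To verify the reduction hypothesis of Lemma \ref{main}, let ${\bf u} \approx {\bf v}$ be a non-trivial $\mathcal{P}$-identity. Lemma \ref{crit} produces a critical pair $\{p, q\} \subseteq \ocs({\bf u})$; let $j$ be its Type. The hypothesis of Lemma \ref{chaos2} at index $i = j$ then supplies a word ${\bf w}$ satisfying (i)--(iv), and I take ${\bf u_1} = {\bf w}$, ${\bf v_1} = {\bf v}$. Clauses (i) and (ii) give $\Delta \vdash \{{\bf u} \approx {\bf u_1}, {\bf v} \approx {\bf v_1}\}$ and that ${\bf u_1} \approx {\bf v_1}$ is a $\mathcal{P}$-identity. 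Clause (iv) applied at level $j$ asserts that every Type $\ge j$ pair stable in ${\bf u} \approx {\bf v}$ remains stable in ${\bf u_1} \approx {\bf v_1}$, giving $N_k({\bf u_1} \approx {\bf v_1}) \le N_k({\bf u} \approx {\bf v})$ for all $k \ge j$; and clause (iii) adds that the specific pair $\{p, q\}$, previously contributing to $N_j({\bf u} \approx {\bf v})$, no longer does, so $N_j({\bf u_1} \approx {\bf v_1}) \le N_j({\bf u} \approx {\bf v}) - 1$. Reading $(N_n, \dots, N_1)$ from the top down, the first index where the tuples differ is at most $j$, and the new value there is strictly smaller, so lex order drops and $|\dis|$ strictly decreases.

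Lemma \ref{main} then yields that every $\mathcal{P}$-identity is derivable from $\Delta$ together with a trivial identity --- i.e.\ from $\Delta$ alone, since trivial identities are tautologies. The main obstacle is not conceptual but bookkeeping: a single reduction does not shrink any immediately obvious plain invariant such as the total number of unstable pairs, because critical pairs of low Type may leave higher-Type counts untouched; what does drop is the lex tuple $(N_n, \dots, N_1)$, since condition (iv) protects all strictly higher Types from degrading. Encoding this tuple as the size of a single set is the one place that needs care, and the uniform bound $M$ afforded by $\mathcal{P}$-preservation of word length makes the encoding consistent throughout any derivation chain.
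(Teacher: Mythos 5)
Your proof is correct, and it rests on the same underlying descent principle as the paper's --- condition (iv) protects every Type above the one being repaired, so the tuple of Type-stratified counts of unstable pairs decreases lexicographically --- but the packaging is genuinely different. The paper splits the argument in two: an inner claim (its Claim 4) showing that from a $\mathcal P$-identity of Type $k$ one can always strictly shrink the set $\ch_k$ of Type-$k$ unstable pairs, proved by contradiction via an infinite sequence of words and the largest Type recurring infinitely often; and then $n$ chained invocations of Lemma \ref{main}, one per Type level, with $\mathcal Q$ being ``of Type at most $k-1$,'' finished by induction on $k$. You instead make a single invocation of Lemma \ref{main} with $\mathcal Q$ the trivial identities, encoding the lexicographic order as the cardinality $\sum_k N_k M^{k-1}$ of one set. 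The two points that need care in your route both check out: the uniform bound $M$ is legitimate because every word in the chain is balanced against the fixed right-hand side ${\bf v}$, so all occurrence sets have the same size and every $N_k$ stays below $M$ throughout; and the counts $N_k$ are comparable across the chain because the Type assignment is $\mathcal P$-compatible (clause (ii) of compatibility transports Types along $l_{{\bf u},{\bf v}}$), which you use implicitly and which does hold. Your version is arguably cleaner, replacing the paper's infinite-descent contradiction with an explicit well-founded measure, at the mild cost of the artificial device of padding each Type-$k$ pair with $M^{k-1}$ labelled copies so that the measure remains literally the cardinality of a set, as the statement of Lemma \ref{main} requires.
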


\begin{proof} For each $1 \le i \le n$, we use $\ch _i({\bf x} \approx {\bf y})$ to denote the set of all unstable pairs of Type $i$ in a $\mathcal P$-identity ${\bf x} \approx {\bf y}$.

\begin{claim} \label{claim4} Let  ${\bf u} \approx {\bf v}$ be a $\mathcal P$-identity of Type $k$ for some $1 \le k \le n$. Then one can find a word $\bf u_1$ such that
$\Delta \vdash {\bf u} \approx {\bf u_1}$, the identity ${\bf u_1} \approx {\bf v}$ is a $\mathcal P$-identity of Type at most $k$ and $\ch _k({\bf u_1} \approx {\bf v})$ is a proper subset of $\ch _k({\bf u} \approx {\bf v})$.
\end{claim}

\begin{proof} Since  ${\bf u} \approx {\bf v}$ is of Type $k>0$, it contains an unstable pair of Type $k$.
Then by Lemma \ref{crit}, the identity ${\bf u} \approx {\bf v}$ contains a critical pair $\{a_1,b_1 \} \subseteq \ocs({\bf u})$.
The pair $\{a_1,b_1 \}$ is of Type $T_1 \in \{1,2, \dots,k\}$.
By our assumption, one can find a word $\bf p_1$ such that
$\Delta \vdash {\bf u} \approx {\bf p_1}$, for each $i> T_1$, $\ch _i({\bf p_1} \approx {\bf v})$ is a subset of $\ch _i({\bf u} \approx {\bf v})$ and $\ch _{T_1}({\bf p_1} \approx {\bf v})$ is a proper subset of $\ch _{T_1}({\bf u} \approx {\bf v})$.

If $\ch _k({\bf p_1} \approx {\bf v})$ is a proper subset of $\ch _k({\bf u} \approx {\bf v})$ then we are done. Otherwise, $T_1 < k$, $\ch _k({\bf p_1} \approx {\bf v}) = \ch _k({\bf u} \approx {\bf v})$ and in view of Lemma \ref{crit}, the identity ${\bf p_1} \approx {\bf v}$  contains a critical pair  $\{a_2,b_2 \} \subseteq \ocs({\bf p_1})$. The pair $\{a_2,b_2 \}$ is of Type $T_2 \in \{1,2, \dots,k\}$. By our assumption, one can find a word $\bf p_2$ such that
$\Delta \vdash {\bf p_1} \approx {\bf p_2}$,
for each $i >T_2$, $\ch _i({\bf p_1} \approx {\bf v})$ is a subset of $\ch _i({\bf p_2} \approx {\bf v})$ and $\ch _{T_2}({\bf p_2} \approx {\bf v})$ is a proper subset of $\ch _{T_2}({\bf p_1} \approx {\bf v})$. And so on.

If for some $j>0$, $\ch _k({\bf p_j} \approx {\bf v})$ is a proper subset of $\ch _k({\bf u} \approx {\bf v})$ then we are done. Otherwise, we obtain an infinite sequence of words ${\bf p_1}, {\bf p_2}, \dots$ and numbers $T_1, T_2, \dots$ such that
for each $j>0$ we have $\ch _k({\bf p_j} \approx {\bf v}) = \ch _k({\bf u} \approx {\bf v})$ and $0 < T_j <k$.

Let $m <k$ be the biggest number that repeats in this sequence infinite number of times. This means that starting with some number $Q$ big enough, we do not see any critical pairs of Types bigger than $m$ and that
one can find a subsequence $Q < j_1 < j_2 < \dots$ so that
$m= T_{j_1} = T_{j_2} = T_{j_3} = \dots$. Then for each $g=1,2, \dots$, the set  $\ch _m({\bf p_{j_g}} \approx {\bf v})$ is a proper subset of $\ch _m({\bf p_{j_{g-1}}} \approx {\bf v})$. This means that the number of critical pairs of Type $m$ must be decreasing to zero. A contradiction.
\end{proof}

In view of Lemma \ref{main}, for each $1 \le k \le n$, every $\mathcal P$-identity of Type $k$ can be derived from $\Delta$ and from a $\mathcal P$-identity of Type at most $(k-1)$. By induction, every $\mathcal P$-identity can be derived from $\Delta$.
\end{proof}

If $x$ and $y$ are non-linear variables in a word $\bf u$ then we say that $\bf u$ is {\em $xy$-compact} if all occurrences of $x$ and $y$ are collected together in each block of $\bf u$. For example, the word $pxxyztpyxyz$ is $xy$-compact while the word $xyzyxz$ is not.

\begin{theorem} \cite[Theorem 1.1]{EL1} \label{fb3} Every monoid $S$ which satisfies $\{\sigma_1, \sigma_{\mu}\}$ (or dually, $\{\sigma_{\mu}, \sigma_2\}$) is finitely based by some almost-linear identities and by some block-balanced identities with two non-linear variables.
\end{theorem}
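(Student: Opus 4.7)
The plan is to combine Lemma~\ref{albb} with Lemma~\ref{main}. By Lemma~\ref{albb}, every identity of $S$ can be derived from almost-linear identities of $S$ and from block-balanced identities of $S$. It therefore suffices to derive every block-balanced identity of $S$ from
$\Sigma := \{\sigma_1,\sigma_\mu\}^\delta \cup \{\text{almost-linear identities of } S\} \cup \{\text{block-balanced identities of } S \text{ with at most two non-linear variables}\}$.
Lemma~\ref{vol} gives a finite basis for the almost-linear part, and Lemma~\ref{lee} gives a finite basis for the rest once one observes that every 2-non-linear-variable block-balanced identity of $S$ reduces modulo $\{\sigma_1,\sigma_\mu\}^\delta$ to an identity of the form~(\ref{e1}) by using middle and beginning swaps to normalize every non-final block.

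For the reduction to $\Sigma$, I would apply Lemma~\ref{main} with the dissimilarity function
\[
\dis({\bf u} \approx {\bf v}) = \bigl\{ \{x,y\} \subseteq \non({\bf u}) : {\bf u}(x,y,\lin({\bf u})) \ne {\bf v}(x,y,\lin({\bf u})) \bigr\}.
\]
In a block-balanced identity the relative order of any two occurrences of $\bf u$ is reconstructible either from a 2-variable projection (two distinct non-linear variables) or from block-balancedness (at least one linear variable), so $\dis = \emptyset$ forces ${\bf u} = {\bf v}$, making the terminal case of Lemma~\ref{main} the trivial identity.

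For the reduction step, given a block-balanced ${\bf u} \approx {\bf v}$ with $\{x,y\} \in \dis$, I would first apply iterated $\{\sigma_1,\sigma_\mu\}^\delta$-swaps to \emph{both} $\bf u$ and $\bf v$ to bring them into same-side $xy$-compact forms $\bf u', \bf v'$ in which every block has all $x$- and $y$-occurrences collected on the right. Then I would apply the 2-variable identity $e_{xy} := [{\bf u}(x,y,\lin({\bf u})) \approx {\bf v}(x,y,\lin({\bf u}))]$ of $S$ (this identity is in $\Sigma$, and is unaffected by the compactification because the latter never swaps an $x$ with a $y$) to $\bf u'$ by substituting its linear variables with the non-$xy$ material of $\bf u'$, yielding a word $\bf u_1$; set $\bf v_1 := \bf v'$. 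Then $\bf u_1$ and $\bf v_1$ agree on the $\{x,y\}$-projection by construction; on every $\{x,z\}$- and $\{y,z\}$-projection for $z \notin \{x,y\}$, because same-side compactification places all $x$'s and $y$'s to the right of every $z$ in each block, so both projections of each block equal $z^{\beta} x^{\alpha}$ (resp.\ $z^{\beta} y^{\alpha'}$) with exponents fixed by block-balancedness; and on every $\{z,w\}$-projection with $z, w \notin \{x, y\}$, because the compactification swaps never reorder non-$xy$ variables relative to each other. Consequently $\dis({\bf u_1} \approx {\bf v_1})$ is strictly smaller than $\dis({\bf u} \approx {\bf v})$.

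The main obstacle is verifying that the same-side $xy$-compactification of the first step can actually be carried out inside $\{\sigma_1,\sigma_\mu\}^\delta$ alone. Each elementary swap $xz \leftrightarrow zx$ or $yz \leftrightarrow zy$ requires suitable outside occurrences of the involved variables to match the template of $\sigma_\mu$ (two-sided context) or of $\sigma_1$ (right-sided context); in end-of-word cases where the needed right-hand occurrences are missing, one must either supplement with an appropriate 2-variable block-balanced identity of $S$ (which is already in $\Sigma$), or argue using the block structure to reduce to a case handled by $\sigma_\mu$ or $\sigma_1$. The dual statement for monoids satisfying $\{\sigma_\mu, \sigma_2\}$ follows by the mirror-image argument: compactify $xy$-occurrences to the left of each block and replace $\sigma_1$ by $\sigma_2$ throughout.
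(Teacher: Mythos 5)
Your high-level decomposition is exactly the paper's: Lemma~\ref{albb} to reduce to almost-linear plus block-balanced identities, Lemma~\ref{vol} for the former, a reduction of block-balanced identities to those with two non-linear variables, and Lemma~\ref{lee} after normalizing to the form~(\ref{e1}). The gap is in your reduction step. Your measure counts variable pairs $\{x,y\}$ with disagreeing projections ${\bf u}(x,y,\lin({\bf u}))$, and for Lemma~\ref{main} you must show this count strictly decreases; your argument that no pair $\{x,z\}$ enters $\dis$ rests on the claim that same-side compactification puts every occurrence of $x$ and $y$ to the right of every $z$ in each block, so that both projections collapse to the canonical form $z^{\beta}x^{\alpha}$ per block. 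That canonical form is not reachable from $\{\sigma_1,\sigma_\mu\}^\delta$: these identities can never swap an adjacent pair $\{{_{last{\bf u}}a},{_{last{\bf u}}b}\}$ of last occurrences of two non-linear variables ($\sigma_1$ needs both occurrences non-last, $\sigma_\mu$ needs the right-hand one non-last or the left-hand one non-last), so the $xy$-segment in the final block gets stuck at the first last occurrence of a third variable, and where it gets stuck can differ between ${\bf u}$ and ${\bf v}$. Concretely, take ${\bf u}=xywt\,xyzwz$ and ${\bf v}=xywt\,yxwzz$: this is block-balanced with $\dis=\{\{x,y\},\{z,w\}\}$, the pairs $\{x,z\}$ and $\{y,z\}$ are stable, but pushing the $xy$-segment of the second block as far right as possible yields $zxywz$ in ${\bf u}'$ (the segment passes ${_{1{\bf u}}z}$ and stops at ${_{last{\bf u}}w}$) while in ${\bf v}'$ the segment stops immediately at ${_{last{\bf v}}w}$ and passes no $z$; then ${\bf u_1}(x,z,t)=xtzxz\ne xtxzz={\bf v_1}(x,z,t)$, so $\{x,z\}$ and $\{y,z\}$ enter $\dis$ and your measure increases from $2$ to $3$. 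Your closing paragraph acknowledges the compactification as ``the main obstacle'' but the proposed fix --- supplementing with a two-variable block-balanced identity of $S$ --- does not work, because the identity needed to move a last occurrence past another last occurrence need not be satisfied by $S$ (indeed, when $\{x,z\}\notin\dis$ the projection identity is trivial and swaps nothing).

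This immovability of last-occurrence pairs is precisely why the paper does not run Lemma~\ref{main} on full two-variable projections but instead uses Lemma~\ref{chaos2} with a two-Type assignment on pairs of \emph{occurrences}: Type~1 pairs (not both last occurrences) are dispatched by a single $\{\sigma_1,\sigma_\mu\}^\delta$-swap, and for a Type~2 critical pair $\{{_{last{\bf u}}x},{_{last{\bf u}}y}\}$ one only compactifies ${\bf u}$ (moving non-last occurrences, which touches no Type~2 pair) and applies ${\bf u}(x,y,\lin({\bf u}))\approx{\bf v}(x,y,\lin({\bf u}))$; condition (iv) of Lemma~\ref{chaos2} then only requires preserving the relative order of \emph{last} occurrences of stable pairs, which survives both operations, rather than preserving entire projections. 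To repair your argument you would either have to adopt this occurrence-level bookkeeping or prove a genuinely new invariance statement for your coarser measure; as written the induction does not close.
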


\begin{proof} By Lemma \ref{albb},
every identity of $S$ can be derived from
 some almost-linear identities of $S$ and from some block-balanced identities of $S$.
By the result of Volkov (Lemma \ref{vol}) all almost-linear identities of $S$ can be derived from a finite set of almost-linear identities of $S$.

\begin{claim} \label{fb21}  All block-balanced identities of $S$ can be derived from its block-balanced identities with two non-linear variables.
\end{claim}

\begin{proof} We assign a Type to each pair $\{c,d \} \subseteq \ocs({\bf u})$ of occurrences of distinct non-linear variables $x \ne y$ in a word $\bf u$ as follows: $\{c,d \}$ is of Type 2 if $\{c,d\} = \{{_{{last}{\bf u}}x}, {_{{last}{\bf u}}y} \}$ and  of Type 1 otherwise. It is easy to see that this assignment of Types is compatible with the property of being a block-balanced identity.

 Let $\Delta$ be the set of all block-balanced identities of $S$ with two non-linear variables. Let ${\bf u} \approx {\bf v}$ be a block-balanced identity of $S$ and $\{c,d \} \subseteq \ocs({\bf u})$ be a critical pair in ${\bf u} \approx {\bf v}$.
If $\{c,d \}$ is of Type 1, then by using an identity from $\{ \sigma_1, \sigma_{\mu}\}^\delta$ we swap $c$ and $d$ in ${\bf u}$ and obtain a word ${\bf w}$. Evidently, the word ${\bf w}$ satisfies all three requirements of Lemma \ref{chaos2}.

If $\{c,d \}$ is of Type 2, then $c = {_{{last}{\bf u}}x}$ and $d = {_{{last}{\bf u}}y}$.
 We may assume that $\bf u$ is $xy$-compact. (Otherwise, by using the identities in  $\{\sigma_1, \sigma_{\mu}\}^\delta$ and swapping the adjacent occurrences of variables, we move some non-last occurrences of $x$ and $y$ to the right until we obtain an $xy$-compact word.)
If we apply the identity ${\bf u} (x,y, \lin({\bf u})) \approx {\bf v} (x,y, \lin({\bf u}))$ to $\bf u$  we obtain a word $\bf w$ which satisfies all three requirements of Lemma \ref{chaos2}.

Lemma \ref{chaos2} implies that all block-balanced identities of $S$ can be derived from its block-balanced identities with two non-linear variables.
\end{proof}

If $\bf u$ is a word with two non-linear variables then  by using the identities $\{ \sigma_1, \sigma_{\mu}\}^\delta$ and commuting adjacent occurrences of variables,
the word $\bf u$ can be transform into one side of an identity of the form (\ref{e1}). By the result of Lee (Lemma \ref{lee}), all identities of $S$ of the form (\ref{e1}) can be derived from a finite subset. Therefore, the monoid $S$ is finitely based by some almost-linear identities and by some block-balanced identities with two non-linear variables. \end{proof}

\section{Finitely based subvarieties of $\var \{\sigma_1, \sigma_2 \}$}

We say that an identity ${\bf u} \approx {\bf v}$ has Property $\mathcal P _{1,2}$ if $\lin ({\bf u}) = \lin ({\bf v})$, $\non ({\bf u}) = \non ({\bf v})$ and for each $x, y \in \con ({\bf u})$ we have $({_{1{\bf u}}x}) <_{\bf u} ({_{last{\bf u}}y})$ iff $({_{1{\bf v}}x}) <_{\bf v} ({_{last{\bf v}}y})$. Evidently, Property $\mathcal P _{1,2}$ is stronger than $\mathcal P _{1}$.
The following lemma will be reversed in Proposition \ref{1ell}.

\begin{lemma} \label{s1s21}
Every block-balanced $\mathcal P _{1,2}$-identity can be derived from $\{\sigma_1, \sigma_2\}^\delta$.
\end{lemma}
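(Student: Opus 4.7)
The plan is to invoke Lemma~\ref{main} with $\mathcal{P}$ the property ``block-balanced $\mathcal{P}_{1,2}$-identity'', $\mathcal{Q}$ the property of being trivial, $\Sigma=\{\sigma_1,\sigma_2\}^{\delta}$, and distance function $\ch$.  Given a non-trivial block-balanced $\mathcal{P}_{1,2}$-identity ${\bf u}\approx{\bf v}$, Lemma~\ref{crit} supplies a critical pair $\{c,d\}$ with $c\ll_{\bf u}d$, and block-balancedness forces $c$ and $d$ to be occurrences of distinct non-linear variables, say of $X$ and $Y$, lying in a common block of ${\bf u}$.

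The key claim I would prove is that every such critical pair satisfies one of the following: (a) both $X$ and $Y$ have an occurrence strictly after $d$ in ${\bf u}$, or (b) both have an occurrence strictly before $c$.  The argument is by contradiction.  If (a) and (b) both fail, then $(c={_{last{\bf u}}X}$ or $d={_{last{\bf u}}Y})$ and $(c={_{1{\bf u}}X}$ or $d={_{1{\bf u}}Y})$; since $X$ and $Y$ are non-linear, $c$ cannot be simultaneously first and last $X$, and likewise $d$ for $Y$, so only the two ``cross'' configurations $\{c={_{last{\bf u}}X},\ d={_{1{\bf u}}Y}\}$ and $\{c={_{1{\bf u}}X},\ d={_{last{\bf u}}Y}\}$ survive.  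In the first of these, $d={_{1{\bf u}}Y}>_{\bf u}{_{last{\bf u}}X}=c$, so $\mathcal{P}_{1,2}$ applied to the pair $(Y,X)$ forces the same inequality in ${\bf v}$, i.e.\ $c<_{\bf v}d$, contradicting the instability $d<_{\bf v}c$.  The second configuration is excluded symmetrically by applying $\mathcal{P}_{1,2}$ to the pair $(X,Y)$.

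Given the claim, in case~(a) I would swap $c$ and $d$ using $\sigma_1$: depending on whether the nearest later occurrence of $X$ precedes or follows the nearest later occurrence of $Y$, one of the two substitutions $x\mapsto X,\,y\mapsto Y$ (applied in the forward direction) or $x\mapsto Y,\,y\mapsto X$ (applied in the reverse direction) of $\sigma_1$ matches the relevant factor of ${\bf u}$, and any empty gaps between the matched occurrences are absorbed by $\{\sigma_1\}^{\delta}$.  Case~(b) is handled dually via $\{\sigma_2\}^{\delta}$.  Since the resulting word ${\bf u_1}$ differs from ${\bf u}$ only by transposing the adjacent letters $c$ and $d$, the multiset of each block is preserved and the positions of the first and last occurrences of every variable change by at most one in a way that preserves every order relation of the form ``first $\alpha$ versus last $\beta$''; hence ${\bf u_1}\approx{\bf v}$ is again a block-balanced $\mathcal{P}_{1,2}$-identity.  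As $\{c,d\}$ is the unique pair whose relative order differs between ${\bf u}$ and ${\bf u_1}$, we have $|\ch({\bf u_1}\approx{\bf v})|=|\ch({\bf u}\approx{\bf v})|-1$, and Lemma~\ref{main} closes the argument.  The main obstacle is the key claim; the subsequent swap and stability analysis is routine bookkeeping, with the only subtlety being to use the renamed substitution of $\sigma_1$ in the correct direction when the later $Y$ precedes the later $X$ in~(a).
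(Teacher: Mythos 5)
Your proposal is correct and follows essentially the same route as the paper: locate a critical pair via Lemma~\ref{crit}, use block-balancedness to see both occurrences are of non-linear variables, use Property $\mathcal P_{1,2}$ together with instability to rule out the configurations $\{{_{1{\bf u}}x},{_{last{\bf u}}y}\}$, swap the pair with an identity from $\{\sigma_1,\sigma_2\}^\delta$, and conclude by the decrease of $\ch$ and Lemma~\ref{main}. Your write-up merely makes explicit two points the paper leaves implicit (the case analysis behind the key claim and the check that the new identity is still a block-balanced $\mathcal P_{1,2}$-identity), both of which you handle correctly.
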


\begin{proof} Let ${\bf u} \approx {\bf v}$ be a non-trivial block-balanced $\mathcal P _{1,2}$-identity.
Since ${\bf u} \approx {\bf v}$ is non-trivial, it contains an unstable pair  $\{c,d \} \subseteq \ocs({\bf u})$. In view of Lemma \ref{crit}, we may assume that $c \ll_{\bf u} d$. Since ${\bf u} \approx {\bf v}$ is block-balanced, both $c$ and $d$ are occurrences of some non-linear variables $x \ne y \in \non ({\bf u})$.
Since ${\bf u} \approx {\bf v}$ has Property $\mathcal P _{1,2}$, the pair $\{c,d \}$ is not of the form $\{ {_{1{\bf u}}x}, {_{last{\bf u}}y}\}$.
Therefore, one can swap $c$ and $d$ in $\bf u$ by using an identity
in $\{\sigma_1, \sigma_{2}\}^\delta$ and obtain a word ${\bf u_1}$. Notice that $\ch ({\bf u_1} \approx {\bf w})$ is a proper subset of $\ch ({\bf u} \approx {\bf v})$.
By Lemma \ref{main}, every block-balanced $\mathcal P _{1,2}$-identity can be derived from $\{\sigma_1, \sigma_2\}^\delta$.
\end{proof}

For $n>0$, a word $\bf u$ is called $n$-limited if each variable occurs in $\bf u$ at most $n$ times.
 An identity is called {\em $n$-limited} if both sides of this identity are $n$-limited words. We use $A_0^1$ to denote the monoid obtained by adjoining an identity element to the semigroup $A_0= \langle a,b \mid aa=a, bb=b, ab=0 \rangle$ of order four.

\begin{prop} \label{Straub} For a monoid $S$ the following are equivalent:

(i) $Eq(S) = J_2$;

(ii) $Eq(S)$ is the set of all $\mathcal P _{1,2}$-identities;

(iii) $S$ is finitely based by $\{\sigma_1, \sigma_2, xt_1xt_2x \approx xt_1t_2x \}^\delta$;

(iv) $S$ is equationally equivalent to $A_0^1$.

\end{prop}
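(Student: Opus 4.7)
The equivalence splits naturally: (i) $\Leftrightarrow$ (ii) is essentially definitional, (i) $\Leftrightarrow$ (iv) rests on a known fact about $A_0^1$, and the substance is concentrated in (ii) $\Leftrightarrow$ (iii).

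For (i) $\Leftrightarrow$ (ii), I would unpack what scattered subwords of length at most $2$ encode: length-$1$ subwords pin down $\con({\bf u})$; subwords of the form $xx$ pin down $\non({\bf u})$; and for distinct $x,y$ the subword $xy$ appears in ${\bf u}$ iff ${_{1{\bf u}}x} <_{\bf u} {_{last{\bf u}}y}$. These are exactly the defining conditions of a $\mathcal{P}_{1,2}$-identity, so $J_2$ coincides with the set of $\mathcal{P}_{1,2}$-identities. For (i) $\Leftrightarrow$ (iv), I would appeal to the classical fact $Eq(A_0^1) = J_2$ (the source of the proposition's label) together with the tautology that equational equivalence means equal equational theories.

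The core of the proof is (ii) $\Leftrightarrow$ (iii), which I reduce to showing that $\Sigma := \{\sigma_1, \sigma_2,\ xt_1xt_2x \approx xt_1t_2x\}^\delta$ generates exactly the $\mathcal{P}_{1,2}$-identities. The inclusion ``$\Sigma$-derivable $\subseteq \mathcal{P}_{1,2}$'' is immediate: each generator is itself $\mathcal{P}_{1,2}$ by direct inspection of scattered subwords, and $\mathcal{P}_{1,2}$ is preserved by substitution, context and transitivity. For the reverse inclusion I would apply Lemma \ref{main} in two stages. Stage I takes $\mathcal{P}$ to be $\mathcal{P}_{1,2}$-identities, $\mathcal{Q}$ to be the $2$-limited $\mathcal{P}_{1,2}$-identities, and the distance to count the total excess of non-linear-variable occurrences beyond two; when $y$ has three consecutive occurrences in ${\bf u}$ separated by subwords $\alpha,\beta$, substituting $t_1 \mapsto \alpha$, $t_2 \mapsto \beta$ in $yt_1yt_2y \approx yt_1t_2y$ deletes the middle occurrence and produces ${\bf u_1}$ still $\mathcal{P}_{1,2}$-equivalent to ${\bf v}$, since ${_{1{\bf u}}y}$ and ${_{last{\bf u}}y}$ are preserved and no other variable is disturbed.

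Stage II rests on the observation that every $2$-limited $\mathcal{P}_{1,2}$-identity is automatically block-balanced. Indeed, for each non-linear $x$ the block index of ${_{1{\bf u}}x}$ is determined by the $\mathcal{P}_{1,2}$-invariant comparisons ${_{1{\bf u}}x} <_{\bf u} t$ with the linear variables $t$, and likewise for ${_{last{\bf u}}x}$; since these two occurrences exhaust $x$'s multiplicity, the multiset of non-linear variables in each block is a $\mathcal{P}_{1,2}$-invariant. Lemma \ref{s1s21} then derives such an identity from $\{\sigma_1, \sigma_2\}^\delta \subseteq \Sigma$. Concatenating Stages I and II derives the original $\mathcal{P}_{1,2}$-identity from $\Sigma$. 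The main technical hurdle is making the block-balanced assertion of Stage II precise---verifying that the block signature $(\text{block of }{_{1{\bf u}}x},\ \text{block of }{_{last{\bf u}}x})$ is fully recoverable from the $\mathcal{P}_{1,2}$-data---but once that is secured the reduction cascade closes (ii) $\Leftrightarrow$ (iii), and hence the whole proposition.
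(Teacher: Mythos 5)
Your proposal is correct and follows essentially the same route as the paper: the same unpacking of length-$\le 2$ scattered subwords for (i)$\leftrightarrow$(ii), and the same two-step reduction for the core implication --- strip each side to a $2$-limited word via $xt_1xt_2x \approx xt_1t_2x$, observe that a $2$-limited (balanced) $\mathcal P_{1,2}$-identity is block-balanced, and finish with Lemma \ref{s1s21}. The only cosmetic difference is how the $A_0^1$ vertex is attached: the paper closes the cycle via Edmunds' basis for $A_0^1$ (for (iii)$\rightarrow$(iv)) and Blanchet-Sadri's basis for $J_2$ (for (iv)$\rightarrow$(i)), whereas you cite the combined fact $Eq(A_0^1)=J_2$ directly and instead verify the derivation-stability of $\mathcal P_{1,2}$ by hand, which is a legitimate and equivalent bookkeeping choice.
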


\begin{proof} (i) $\rightarrow$ (ii) Take $({\bf u} \approx {\bf v}) \in J_2$. Since the word $t$ is an isoterm for $J_2$, we have that
$\lin({\bf u}) = \lin({\bf v})$ and $\non({\bf u}) = \non({\bf v})$.

Suppose that for some $x \ne y \in \con({\bf u})$ we have
$({_{last{\bf u}}x}) <_{\bf u} ({_{1{\bf u}}y})$ but $({_{1{\bf v}}y}) <_{\bf v} ({_{last{\bf v}}x})$. Then the word $\bf u$ deletes to $x^ny^m$ for some $n,m >0$.
Notice that the word $\bf u$ does not contain the scattered subword $yx$. On the other hand, the word $\bf v$  contains the scattered subword $yx$. To avoid a contradiction we conclude that ${\bf u} \approx {\bf v}$ is a $\mathcal P _{1,2}$-identity.

Conversely, let ${\bf u} \approx {\bf v}$ be a $\mathcal P _{1,2}$-identity. If $\bf u$ contains a scattered subword $xy$ for some $x, y \in \con({\bf u})$ then $({_{1{\bf u}}x}) <_{\bf u} ({_{last{\bf u}}y})$. Since  ${\bf u} \approx {\bf v}$ is a $\mathcal P _{1,2}$-identity, we have $({_{1{\bf v}}x}) <_{\bf v} ({_{last{\bf v}}y})$. Consequently, $xy$ is a scattered subword of $\bf v$.

(ii) $\rightarrow$ (iii) It is easy to see that every $\mathcal P _{1}$-identity is a consequence of $\{xt_1xt_2x \approx xt_1t_2x\}^\delta$ and a 2-limited balanced identity. It is also easy to see that a 2-limited balanced $\mathcal P _{1,2}$-identity is block-balanced. Therefore, every $\mathcal P _{1,2}$-identity is a consequence of $\{xt_1xt_2x \approx xt_1t_2x\}^\delta$ and of a 2-limited block-balanced $\mathcal P _{1,2}$-identity. So, by Lemma \ref{s1s21}, every identity of $S$ is a consequence of $\{xt_1xt_2x \approx xt_1t_2x, \sigma_1, \sigma_2 \}^\delta$.

(iii) $\rightarrow$ (iv) According to Proposition 3.2(a) in \cite{Ed} the set $\{xt_1xt_2x \approx xt_1t_2x, \sigma_1, \sigma_2 \}^\delta$ is a finite basis for
the monoid $A_0^1$.

(iv) $\rightarrow$ (i) According to Theorem 3.5 in \cite{BS} the set $J_2$ is finitely based by  $\{xt_1xt_2x \approx xt_1t_2x, (xy)^2 \approx (yx)^2 \}$.
It is easy to see that this set of identities is equivalent to $\{xt_1xt_2x \approx xt_1t_2x, \sigma_1, \sigma_2 \}^\delta$.
 \end{proof}

We say that an identity ${\bf u} \approx {\bf v}$ has property $\mathcal P _{1b}$ if $\lin ({\bf u}) = \lin ({\bf v})$, $\non ({\bf u}) = \non ({\bf v})$,
${\bf u}(\lin({\bf u})) = {\bf v} (\lin({\bf u}))$ and if for some variable $x\in \non ({\bf u})$ the identity ${\bf u}(x, \lin({\bf u})) \approx {\bf v} (x, \lin({\bf u}))$ is non-trivial then all occurrences of $x$ in $\bf u$ (in $\bf v$) belong to the same block of $\bf u$ (of $\bf v$).
Evidently, Property $\mathcal P _{1b}$ is stronger than $\mathcal P _{1}$ but weaker than the property of being a block-balanced identity.

\begin{lemma} \label{s1s22} Let $S$ be a  monoid such that $S \models  \{\sigma_1, \sigma_{2}\}$.
Then every $\mathcal P _1$-identity of $S$ can be derived from some almost-linear identities of $S$, from $\{\sigma_1, \sigma_2\}^\delta$
and from a  $\mathcal P _{1b}$-identity of $S$.
\end{lemma}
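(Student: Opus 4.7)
The plan is to apply Lemma \ref{main} with $\mathcal{P}$ the property of being a $\mathcal{P}_1$-identity and $\mathcal{Q}$ the property of being a $\mathcal{P}_{1b}$-identity, taking $\Sigma$ to consist of all almost-linear identities of $S$ together with $\{\sigma_1,\sigma_2\}^\delta$. For the required function I use
\[
\dis(\mathcal{P}_1\to\mathcal{P}_{1b})({\bf u}\approx{\bf v}) := \{x\in\non({\bf u}) \mid \pi_x \text{ is non-trivial and } x \text{ occurs in more than one block of } {\bf u} \text{ or of } {\bf v}\},
\]
where $\pi_x$ abbreviates ${\bf u}(x,\lin({\bf u}))\approx{\bf v}(x,\lin({\bf u}))$; this set is empty precisely when ${\bf u}\approx{\bf v}$ is $\mathcal{P}_{1b}$. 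Given a $\mathcal{P}_1$-identity of $S$ that is not $\mathcal{P}_{1b}$, I pick a bad variable $x$ and plan to modify only $\bf u$ (taking ${\bf v}_1:={\bf v}$) so that $x$ drops out of the above set while no other variable is added.

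The first step is to rewrite $\bf u$ into an $x$-compact word using only $\{\sigma_1,\sigma_2\}^\delta$. This is the main obstacle, since unlike in Lemma \ref{albb} we do not have $\sigma_\mu$ at our disposal: $\{\sigma_1,\sigma_2\}^\delta$ only permits swapping two adjacent non-linear letters when both sit at non-last occurrences of their variables (via $\sigma_1$) or both at non-first occurrences (via $\sigma_2$). I argue block by block. In any block strictly between the first and last block containing $x$, every occurrence of $x$ is both non-first and non-last, so a swap with any non-linear neighbour is always legal. In the first block containing $x$ the crucial observation is that every occurrence of $x$ there other than the globally first $x$ is a middle occurrence (hence non-first); a non-linear blocker $y$ lying between two consecutive $x$'s of that block can therefore be pushed past the right $x$ via $\sigma_2$ if $y$ is non-first, or via $\sigma_1$ if $y$ is non-last, and one of these always holds for a non-linear $y$. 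Iterating, all non-$x$ letters between the leftmost and rightmost $x$ of that block are flushed out to the right of its rightmost $x$; the last block containing $x$ is treated dually.

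The second step applies the projection identity $\pi_x$. Because $S$ is a monoid, substituting the identity element for the deleted non-linear variables turns $\pi_x$ into an almost-linear identity of $S$, hence $\pi_x\in\Sigma$. Since $\bf u$ has been made $x$-compact, $\pi_x$ is directly applicable to $\bf u$ (with the non-$x$ fragments of the blocks of $\bf u$ substituted for its linear letters) and yields a word ${\bf u}_1$ satisfying ${\bf u}_1(x,\lin({\bf u}))={\bf v}(x,\lin({\bf u}))$.

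All moves in Steps 1 and 2 either permute non-linear letters inside a single block of $\bf u$ or redistribute the $x$'s among the blocks; the linear skeleton of $\bf u$ is preserved and, for every $y\neq x$, the multiset of $y$'s inside each block remains the same. Consequently both the $y$-projection and the block support of $y$ are unchanged, so no new bad variable appears, and one gets $\dis(\mathcal{P}_1\to\mathcal{P}_{1b})({\bf u}_1\approx{\bf v})=\dis(\mathcal{P}_1\to\mathcal{P}_{1b})({\bf u}\approx{\bf v})\setminus\{x\}$, whose cardinality is strictly smaller. Lemma \ref{main} then yields the conclusion.
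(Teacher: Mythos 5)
Your overall strategy --- Lemma \ref{main} with $\Sigma$ equal to the almost-linear identities of $S$ together with $\{\sigma_1,\sigma_2\}^\delta$, the same deficiency set, and the reduction ``make $\bf u$ $x$-compact, then apply the projection identity ${\bf u}(x,\lin({\bf u}))\approx{\bf v}(x,\lin({\bf u}))$'' --- is exactly the paper's. But your decision to \emph{modify only} $\bf u$ creates a genuine gap. Your deficiency set (correctly, since this is what $\mathcal P_{1b}$ requires) declares $x$ bad when $\pi_x$ is non-trivial and $x$ meets several blocks of $\bf u$ \emph{or} of $\bf v$; in particular $x$ can be bad while all of its occurrences lie in a single block of $\bf u$. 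In that case your Step 1 collapses: the flushing argument for ``the first block containing $x$'' rests on the observation that every occurrence of $x$ there other than ${_{1{\bf u}}x}$ is a middle (non-first \emph{and} non-last) occurrence, and this is false exactly when ${_{last{\bf u}}x}$ lies in the same block. The failure is not merely in the argument but in the claim itself: if inside one block of $\bf u$ an occurrence ${_{last{\bf u}}z}$ immediately precedes ${_{1{\bf u}}w}$ and both sit strictly between ${_{1{\bf u}}x}$ and ${_{last{\bf u}}x}$ (e.g.\ ${\bf u}=zxzwxw\,t$), then $\bf u$ can never be made $x$-compact by $\{\sigma_1,\sigma_2\}^\delta$. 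Indeed, every consequence of $\{\sigma_1,\sigma_2\}^\delta$ is a block-balanced $\mathcal P_{1,2}$-identity (Proposition \ref{1ell}), so the chain ${_{1{\bf u}}x}<{_{last{\bf u}}z}<{_{1{\bf u}}w}<{_{last{\bf u}}x}$ is preserved in any word reachable from $\bf u$, and the two occurrences of $x$ stay separated. Such a $\bf u$ is single-block in $x$, yet $x$ may still be bad on account of $\bf v$, so your reduction step cannot always be executed and Lemma \ref{main} does not apply as stated.

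The repair is the one sentence you omitted and the paper includes: for each bad $x$, at least one of $\bf u$, $\bf v$ has ${_{1}}x$ and ${_{last}}x$ in different blocks, and one should compactify \emph{that} side --- if it is $\bf v$, apply ${\bf v}(x,\lin({\bf u}))\approx{\bf u}(x,\lin({\bf u}))$ to the compactified $\bf v$ to obtain ${\bf v_1}$ and keep ${\bf u_1}={\bf u}$. Lemma \ref{main} explicitly allows replacing $\bf v$ by a word ${\bf v_1}$ with $\Sigma\vdash{\bf v}\approx{\bf v_1}$, and your counting argument goes through verbatim since $\pi_x$ becomes trivial for ${\bf u}\approx{\bf v_1}$ while no other variable's projection or block support changes. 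With that amendment your proof coincides with the paper's (the paper moves the middle occurrences of $x$ toward the fixed first and last occurrences rather than flushing the blockers outward, but these are the same manipulation).
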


\begin{proof} Let $\Sigma$ denote the set of all almost-linear identities of $S$ together with $\{\sigma_1, \sigma_2\}^\delta$.

Let ${\bf u} \approx {\bf v}$ be a $\mathcal P _1$-identity of $S$ which does not have Property $\mathcal P _{1b}$. This means that $\lin({\bf u}) = \lin({\bf v})$, $\non({\bf u}) = \non({\bf v})$, ${\bf u}(\lin({\bf u})) = {\bf v} (\lin({\bf u}))$,
for some variable $x \in  \non ({\bf u})$ the identity ${\bf u}(x, \lin({\bf u})) \approx {\bf v} (x, \lin({\bf u}))$ is non-trivial
and either ${_{1{\bf u}}x}$ and  ${_{last{\bf u}}x}$ belong to different blocks of $\bf u$ or ${_{1{\bf v}}x}$ and  ${_{last{\bf v}}x}$ belong to different blocks of $\bf v$.

By symmetry, we may assume that ${_{1{\bf u}}x}$ and  ${_{last{\bf u}}x}$ belong to different blocks of $\bf u$.
We may also assume that $\bf u$ is $xx$-compact.
(Otherwise, we fix the first and the last occurrences of $x$ and by using the identities in $\{\sigma_1, \sigma_2\}^\delta$ we move
some non-first and non-last occurrences of $x$ until we obtain an $xx$-compact word).

The word ${\bf u}(x, \lin({\bf u}))$ is applicable to ${\bf u}$. So, for some word ${\bf u_1}$ we have
${\bf u}(x, \lin({\bf u})) \approx {\bf v}(x, \lin({\bf u})) \vdash {\bf u} \approx {\bf u_1}$. Notice that ${\bf u_1}(x, \lin({\bf u})) = {\bf v} (x, \lin({\bf u}))$.
This means that $|\dis (\mathcal P_1 \rightarrow \mathcal P_{1b})({\bf u_1} \approx {\bf v})| < |\dis (\mathcal P_1 \rightarrow \mathcal P_{1b})({\bf u} \approx {\bf v})|$. By Lemma \ref{main}, every identity of $S$ can be derived from $\Sigma$ and from some $\mathcal P_{1b}$-identity of $S$.
\end{proof}

\begin{theorem} \label{1lwords} Let $S$ be a  monoid such that $S \models  \{\sigma_1, \sigma_{2}\}$. Suppose also that for some
 $k\ge0$ the word $x^ky^k$ is an isoterm for $S$ and $S \models  \{t_1xt_2x \dots t_{k+1}x \approx x^{k+1}t_1t_2 \dots t_{k+1}, x^{k+1} \approx x^{k+2}\}$.

Then $S$ is finitely based by some almost-linear identities together with $\{\sigma_1, \sigma_2\}^\delta$.
\end{theorem}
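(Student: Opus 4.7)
The plan is to derive every identity of $S$ from the almost-linear identities of $S$ together with $\{\sigma_1,\sigma_2\}^\delta$; Lemma \ref{vol} then supplies the desired finite basis. Write $\tau_k$ for the identity $t_1xt_2x\cdots t_{k+1}x\approx x^{k+1}t_1t_2\cdots t_{k+1}$. First I would make preliminary reductions. Because $x^ky^k$ is an isoterm, $S$ satisfies only regular identities, and standard substitution arguments show that each $x^a$ with $a\le k$ is an isoterm, that $t_1t_2$ is an isoterm, and that the occurrence count of any variable on the two sides of an identity of $S$ is either equal or $\ge k+1$ on both sides. Using $x^{k+1}\approx x^{k+2}$ to normalize exponents then makes every identity of $S$ equivalent, modulo almost-linear identities, to a $\mathcal{P}_1$-identity of $S$. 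Lemma \ref{s1s22} (applicable since $S\models\{\sigma_1,\sigma_2\}$) further reduces the task to deriving every $\mathcal{P}_{1b}$-identity of $S$ from the target set.

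The key tools are three almost-linear consequences of the hypotheses: deleting $t_2,\ldots,t_{k+1}$ from $\tau_k$ yields $t_1x^{k+1}\approx x^{k+1}t_1$, so $x^{k+1}$ is central; $x^{k+1}\approx x^{k+2}$ says $x^{k+1}$ absorbs any adjacent $x$; and the substitutive consequence $w_1xw_2x\cdots w_{k+1}x\approx x^{k+1}w_1\cdots w_{k+1}$ of $\tau_k$ collects $k+1$ interspersed occurrences of $x$ into a compact block. Combining these, one can extract $x^{k+1}$ from any word containing at least $k+1$ occurrences of $x$: collect $k+1$ of them using substituted $\tau_k$, absorb the remaining ones via absorption and centrality, then slide the compact $x^{k+1}$ to the front using centrality. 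Both Lemma \ref{main} applications below rest on this extraction.

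In the first application of Lemma \ref{main}, let $\mathcal{P}$ be ``$\mathcal{P}_{1b}$-identity of $S$'', let $\mathcal{Q}$ be ``block-balanced identity of $S$'', and let the discrepancy count bad variables. Given a bad variable $x$ in a $\mathcal{P}_{1b}$-identity $\mathbf{u}\approx\mathbf{v}$ of $S$, substituting every other non-linear variable to $1$ yields the almost-linear identity $\mathbf{u}(x,\lin)\approx\mathbf{v}(x,\lin)$; the isoterm hypothesis on $x^ky^k$ forces $occ(x)\ge k+1$, for otherwise substituting a suitable linear variable by $y^k$ would produce an identity non-trivially rewriting $x^ky^k$. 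Since all occurrences of $x$ lie in a single block of $\mathbf{u}$ and of $\mathbf{v}$, the extraction produces $\mathbf{u}\approx x^{k+1}\mathbf{u}_0$ and $\mathbf{v}\approx x^{k+1}\mathbf{v}_0$, where $\mathbf{u}_0,\mathbf{v}_0$ are $\mathbf{u},\mathbf{v}$ with $x$ deleted. The resulting identity $x^{k+1}\mathbf{u}_0\approx x^{k+1}\mathbf{v}_0$ is a $\mathcal{P}_{1b}$-identity of $S$ with one fewer bad variable, so Lemma \ref{main} reduces the $\mathcal{P}_{1b}$ case to the block-balanced case.

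In the second application, let $\mathcal{P}$ be ``block-balanced identity of $S$'', let $\mathcal{Q}$ be ``$\mathcal{P}_{1,2}$ block-balanced identity of $S$'', and let the discrepancy count pairs $(y,z)$ violating $\mathcal{P}_{1,2}$. For such a pair the deleted identity $\mathbf{w}(y,z)\approx z^my^n$ of $S$ must have $yz$ appearing consecutively on the left; swapping $y$ and $z$ and applying the resulting identity to $y^kz^k$ shows that $m,n\le k$ would contradict the isoterm hypothesis, so $occ(y)\ge k+1$ or $occ(z)\ge k+1$. Extracting $y^{k+1}$ (or $z^{k+1}$) as before gives a block-balanced identity of $S$ with strictly smaller discrepancy. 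Iterating reduces to a $\mathcal{P}_{1,2}$ block-balanced identity, handled by Lemma \ref{s1s21}. The main obstacle I expect is carrying out the extraction cleanly: in particular, verifying that substituted $\tau_k$ correctly handles occurrences of $x$ scattered within a single block among other non-linear variables, and that the absorption and centralization steps do not disturb the other variables.
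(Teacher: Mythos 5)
Your argument is correct, and it starts and ends where the paper does (every identity of $S$ is a $\mathcal P_1$-identity, Lemma \ref{s1s22} reduces to $\mathcal P_{1b}$-identities, and Lemma \ref{s1s21} plus Volkov's Lemma \ref{vol} finish), but the middle is decomposed differently. The paper normalizes all high-multiplicity variables \emph{upfront}: every identity of $S$ follows from $\{t_1xt_2x\cdots t_{k+1}x\approx x^{k+1}t_1\cdots t_{k+1},\ x^{k+1}\approx x^{k+2}\}^\delta$ together with the $k$-limited identity obtained by deleting every variable occurring at least $k+1$ times (which is again an identity of the monoid $S$); inside the $k$-limited world the isoterms $x^ky^k$, $x^kt$ and $tx^k$ then make every identity $\mathcal P_{1,2}$ and every $\mathcal P_{1b}$-identity block-balanced for free, so no further induction is needed. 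You instead keep arbitrary multiplicities and extract $x^{k+1}$ \emph{on demand}, running Lemma \ref{main} twice more (once to reach block-balancedness, once to reach $\mathcal P_{1,2}$), each time using the isoterm hypothesis to force the offending variable to occur at least $k+1$ times on both sides. Both versions rest on the same extraction mechanism — the substituted collecting identity, centrality of $x^{k+1}$ from its deleted form $t_1x^{k+1}\approx x^{k+1}t_1$, and absorption via $x^{k+1}\approx x^{k+2}$, all almost-linear identities of $S$ and hence admissible in $\Sigma$. What your route buys is that the paper's ``easy to see'' opening sentence becomes fully explicit; what it costs is the bookkeeping you flag at the end: one must check that prepending $x^{k+1}$ and deleting the remaining occurrences of $x$ preserves the $\mathcal P_{1b}$ (respectively block-balanced) property, strictly decreases the relevant discrepancy set without creating new violations, and that the lower bound $k+1$ on the number of occurrences holds on both sides (it does, since otherwise some $x^j$ with $j\le k$, or some $tx^j$ or $x^jt$, would fail to be an isoterm). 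All of these checks go through, so the proposal is sound.
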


\begin{proof}  It is easy to see that every identity of $S$ can be derived from $\{t_1xt_2x \dots t_{k+1}x \approx x^{k+1}t_1t_2 \dots t_{k+1}, x^{k+1} \approx x^{k+2}\}^\delta$ and a $k$-limited  identity of $S$. Since the word $x^ky^k$ is an isoterm for $S$, every $k$-limited identity of $S$ has property  $\mathcal P _{1,2}$. Consequently, it has property $\mathcal P_1$.
Since the words $x^kt$ and $tx^k$ are isoterms for $S$, every $k$-limited $\mathcal P_{1,b}$-identity of $S$ is block-balanced. By Lemma \ref{s1s22},
every $k$-limited identity of $S$ can be derived from some almost-linear identities of $S$, from $\{\sigma_1, \sigma_2\}^\delta$ and from some block-balanced
identity of $S$. Now Lemma \ref{s1s21} and the result of Volkov (Lemma \ref{vol}) imply that $S$
is finitely based by some almost-linear identities together with $\{\sigma_1, \sigma_2\}^\delta$.
\end{proof}

\begin{theorem} \label{fbS3} Let $\vv$ be a monoid subvariety of $\var \{\sigma_1, \sigma_2\}$ which contains the monoid $A_0^1$. Then  $\vv$ is finitely based by some almost-linear identities together with $\{\sigma_1, \sigma_2\}^\delta$ in each of the following cases:

(i) $\vv$ is non-periodic;

(ii) $\vv$ is aperiodic and for some $0 < d < m$ and $c + p > m$, $\vv \models x^{m-d}tx^d \approx x^ctx^p$
where $m>1$ is the minimal such that $\vv \models x^m \approx x^{m+1}$.

\end{theorem}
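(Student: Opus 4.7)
The plan is to derive every identity of $\vv$ from $\{\sigma_1,\sigma_2\}^\delta$ together with some almost-linear identities of $\vv$; by Volkov's Lemma~\ref{vol} the almost-linear part can then be replaced by a finite subset. Since $\vv\supseteq A_0^1$ and by Proposition~\ref{Straub} $Eq(A_0^1)=J_2$ coincides with the set of all $\mathcal P_{1,2}$-identities, every identity of $\vv$ is a $\mathcal P_{1,2}$-identity, and in particular a $\mathcal P_1$-identity. Lemma~\ref{s1s22} reduces any $\mathcal P_1$-identity of $\vv$ to a $\mathcal P_{1b}$-identity of $\vv$ modulo $\{\sigma_1,\sigma_2\}^\delta$ and the almost-linear identities of $\vv$, and Lemma~\ref{s1s21} derives every block-balanced $\mathcal P_{1,2}$-identity from $\{\sigma_1,\sigma_2\}^\delta$. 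So the real work is reducing each $\mathcal P_{1b}$-identity of $\vv$ to a block-balanced one over the intended basis.

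Let ${\bf u}\approx{\bf v}$ be such a $\mathcal P_{1b}$-identity of $\vv$; it is automatically $\mathcal P_{1,2}$. If ${\bf u}(x,\lin({\bf u}))\ne{\bf v}(x,\lin({\bf u}))$ for some non-linear variable $x$, then $\mathcal P_{1b}$ confines all occurrences of $x$ in each of ${\bf u}$ and ${\bf v}$ to a single block, and $\mathcal P_{1,2}$ forces these two blocks to occupy the same position among the linear letters. The only possible discrepancy is then between the occurrence counts $a=occ_{\bf u}(x)$ and $b=occ_{\bf v}(x)$.

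In case (i), non-periodicity forces ${\bf u}\approx{\bf v}$ to be balanced, hence $a=b$, and the identity is already block-balanced; Lemma~\ref{s1s21} finishes the case. In case (ii), substituting $1$ for every variable other than $x$ in ${\bf u}\approx{\bf v}$ yields $\vv\models x^a\approx x^b$; a short power-identity argument from the minimality of $m$ forces $a,b\ge m$, so $x^a\approx x^b$ is a consequence of the almost-linear identity $x^m\approx x^{m+1}$. To apply this count-adjustment inside ${\bf u}$, we first consolidate the $a$ occurrences of $x$ in the discrepant block into a single consecutive run, using the hypothesized almost-linear identity $x^{m-d}tx^d\approx x^c tx^p$ (with its linear variable $t$ substituted by other letters of the block) together with $\{\sigma_1,\sigma_2\}^\delta$ and $x^m\approx x^{m+1}$. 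The inequality $c+p>m$ supplies the slack needed to grow an $x$-run past each intervening letter until the power collapse $x^m\approx x^{m+1}$ can absorb it. Once the $x$'s are consecutive, we apply $x^a\approx x^b$ and iterate over all discrepant variables, arriving at a block-balanced identity.

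The main obstacle is this consolidation step in case (ii). In Lemma~\ref{albb} the analogous rearrangement was performed using $\sigma_\mu$ to freely swap interior occurrences of variables, but that tool is unavailable under only $\{\sigma_1,\sigma_2\}^\delta$. The argument must therefore rely entirely on the hypothesized identity $x^{m-d}tx^d\approx x^c tx^p$, and the strict inequality $c+p>m$ is essential: it guarantees that each application of the hypothesis strictly lengthens the $x$-run on one side of the substituted letter, eventually producing a run long enough for the power collapse to finalize the consolidation.
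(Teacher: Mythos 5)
Your overall skeleton is the same as the paper's: Proposition~\ref{Straub} gives that every identity of $\vv$ has Property $\mathcal P_{1,2}$, Lemma~\ref{s1s22} reduces everything to $\mathcal P_{1b}$-identities modulo $\{\sigma_1,\sigma_2\}^\delta$ and almost-linear identities, case (i) is settled because non-periodicity forces balancedness, and Lemma~\ref{s1s21} together with Lemma~\ref{vol} closes the argument. The gap is the central step of case (ii). You propose to consolidate all $a$ occurrences of $x$ in the discrepant block into a single consecutive run and then apply $x^a \approx x^b$. This consolidation is not merely delicate --- it is provably impossible in general. Every identity at your disposal ($\sigma_1$, $\sigma_2$, $x^{m-d}tx^d \approx x^ctx^p$, $x^m \approx x^{m+1}$) holds in $A_0^1$, since $\vv \supseteq A_0^1$, so by Proposition~\ref{Straub} (and the derivation-stability of $\mathcal P_{1,2}$, Theorem~\ref{stvar2}) any word obtained from ${\bf u}$ by these identities forms a $\mathcal P_{1,2}$-identity with ${\bf u}$. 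If the block of $x$ properly contains all occurrences of some variable $z$ --- for instance the block $xz^2x$ with $m=2$ --- then $({_{1{\bf u}}x}) <_{\bf u} ({_{last{\bf u}}z})$ and $({_{1{\bf u}}z}) <_{\bf u} ({_{last{\bf u}}x})$, while any word in which all occurrences of $x$ are consecutive violates one of these two relations. Note also that $x^{m-d}tx^d \approx x^ctx^p$ by itself never merges the two runs of $x$ unless $c=0$ or $p=0$, which the hypothesis does not guarantee; so the ``slack'' $c+p>m$ cannot do what you ask of it.

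The fix is that consolidation is unnecessary, and this is exactly how the paper proceeds. First normalize $occ_{\bf u}(x)$ and $occ_{\bf v}(x)$ to lie in $\{m, m+1\}$ (minimality of $m$ gives $occ \ge m$; the excess is removed by collecting the first $k-1$ occurrences and applying $x^{k-1}\approx x^m$). Then gather the occurrences of $x$ into exactly \emph{two} runs, $x^{m-d}\,{\bf T}\,x^{d}$: this only requires moving occurrences of $x$ that are neither first nor last past occurrences of other variables, and inside a block every occurrence of every variable is non-first or non-last, so one of $\sigma_1$, $\sigma_2$ always licenses the swap. A single application of $x^{m-d}tx^d \approx x^ctx^p$, with $t$ evaluated at the separating segment ${\bf T}$, changes the multiplicity from $m$ to $c+p \ge m+1$; collecting the first $c+p-1$ occurrences (again only interior occurrences need to move) and applying $x^{c+p-1}\approx x^m$ trims the count to exactly $m+1$, so that ${\bf u_1}(x) = {\bf v}(x)$ and Lemma~\ref{main} applies. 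The identity $x^a \approx x^b$ applied to a full consecutive run is never used and never needed. With this replacement your argument coincides with the paper's.
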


\begin{proof} Proposition \ref{Straub} implies that every identity of $\vv$ is a $\mathcal P_{1,2}$-identity and consequently, is a $\mathcal P_{1}$-identity.
By Lemma \ref{s1s22}, every identity of $\vv$ can be derived from some almost-linear identities of $\vv$, from $\{\sigma_1, \sigma_2\}^\delta$ and from some $\mathcal P_{1b}$-identity of $\vv$.

If $\vv$ is non-periodic, then every identity of $\vv$ is balanced. Now suppose that $\vv$ is aperiodic. Let ${\bf u} \approx {\bf v}$ be a $\mathcal P_{1b}$-identity of $\vv$.

\begin{claim} The identity ${\bf u} \approx {\bf v}$ can be derived from $\{\sigma_1, \sigma_{2}\}^\delta$ and from a balanced $\mathcal P _{1b}$-identity of $S$.
\end{claim}

\begin{proof}
 If the identity ${\bf u} \approx {\bf v}$ is not balanced, then
 for some variable $x$ we have ${\bf u} (x) \ne {\bf v}(x)$. Since ${\bf u} \approx {\bf v}$ is a $\mathcal P_{1b}$-identity, all occurrences of $x$ in ${\bf u}$ belong to the same block of $\bf u$ and all occurrences of $x$ in ${\bf v}$ belong to the same block of $\bf v$. By the minimality of $m$ we have $occ_{\bf u}(x) \ge m$ and $occ_{\bf v}(x) \ge m$.

We may assume that the variable $x$ occurs at most $m+1$ times in $\bf u$ and in $\bf v$. (If $occ_{\bf u}(x)= k > m+1$ then by using $\{\sigma_1, \sigma_2\}^\delta$ and moving non-last and non-first occurrences of $x$ to the left one can collect the first $k-1$ occurrences of $x$ together and apply $x^{k-1} \approx x^m$).
Since the identity ${\bf u} (x) \approx {\bf v}(x)$ is non-trivial, we may assume that $occ_{\bf u}(x) = m$ and $occ_{\bf v}(x) = m+1$.

By our assumption, we have that $\vv \models x^{m-d}tx^d \approx x^ctx^p$ for some $0 <d <m$ and $c+p > m$.
By using $\{\sigma_1, \sigma_2\}^\delta$ we collect the first $m-d$ occurrences of $x$ in $\bf u$ together and the last $d$ occurrences of $x$ together
and obtain a word $\bf w$.
Now we apply $x^{m-d}tx^d \approx x^ctx^p$ to $\bf w$ and obtain a word $\bf q$. Notice that $occ_{\bf q} (x) = c+p$. If $c+p > m+1$ then by using $\{\sigma_1, \sigma_2\}^\delta$ and $x^{c+p-1} \approx x^{m}$
we obtain a word $\bf u_1$ such that $occ_{\bf u_1}(x) = m+1$.

Notice that ${\bf u_1} (x) = {\bf v}(x)$.
This means that $|\dis (\mathcal P_{1b} \rightarrow$ balanced)(${\bf u_1} \approx {\bf v})| < |\dis (\mathcal P_{1b} \rightarrow$ balanced)(${\bf u} \approx {\bf v})|$. By Lemma \ref{main}, every $\mathcal P _{1b}$-identity of $S$ can be derived from some almost-linear identities of $S$, from $\{\sigma_1, \sigma_{2}\}^\delta$ and
 from some balanced $\mathcal P _{1b}$-identity of $S$.\end{proof}

So, every identity of $\vv$ can be derived from some almost-linear identities of $\vv$, from $\{\sigma_1, \sigma_2\}^\delta$ and from some balanced $\mathcal P_{1b}$-identity ${\bf p} \approx {\bf q}$ of $\vv$. Since ${\bf p} \approx {\bf q}$ has Property $\mathcal P_{1,2}$, it is block-balanced. So, every identity of $\vv$ can be derived from some almost-linear identities of $\vv$, from $\{\sigma_1, \sigma_2\}^\delta$ and from some block-balanced identity of $\vv$.

Now Lemma \ref{s1s21} and the result of Volkov (Lemma \ref{vol}) imply that $\vv$ is finitely based by some almost-linear identities together with $\{\sigma_1, \sigma_2\}^\delta$.
\end{proof}

\begin{cor} \label{1lwords4} Let $S$ be a  monoid such that $S \models  \{\sigma_1, \sigma_{2}\}$ and for each $k>0$ the word $x^ky^k$ is an isoterm for $S$.
Then $S$ is finitely based by some almost-linear identities together with $\{\sigma_1, \sigma_2\}^\delta$.
\end{cor}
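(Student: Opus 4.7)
The plan is to deduce the corollary from Lemmas \ref{s1s22}, \ref{s1s21}, and \ref{vol} after first converting the strong isoterm hypothesis into three structural constraints that every identity of $S$ must satisfy. Taking $y=1$ shows that $x^k$ is an isoterm for every $k>0$, so $S$ satisfies no non-trivial identity of the form $x^n \approx x^m$; substituting the identity element for all but one variable in an arbitrary identity of $S$ then forces every identity of $S$ to be balanced. Since $xy=x^1y^1$ is an isoterm, the linear variables cannot be permuted, so every identity of $S$ is a $\mathcal P_1$-identity. To upgrade to Property $\mathcal P_{1,2}$ I would suppose for contradiction that some identity ${\bf u} \approx {\bf v}$ of $S$ satisfies ${_{1{\bf u}}x} <_{\bf u} {_{last{\bf u}}y}$ while ${_{last{\bf v}}y} <_{\bf v} {_{1{\bf v}}x}$, write $n = occ_{\bf u}(x)$ and $m = occ_{\bf u}(y)$, and specialise every variable other than $x$ and $y$ to the identity to obtain a non-trivial identity $y^m x^n \approx {\bf u}(x,y)$ of $S$. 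Since $y^m x^n$ occurs as the middle contiguous subword of $y^k x^k$ with $k = \max(m,n)$, this identity transforms $y^k x^k$ into $y^{k-m}{\bf u}(x,y)x^{k-n} \neq y^k x^k$, contradicting that $y^k x^k$ is an isoterm for $S$.

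The main obstacle is the next step: showing that every $\mathcal P_{1b}$-identity ${\bf u} \approx {\bf v}$ of $S$ is actually block-balanced. Suppose ${\bf u}(x, \lin({\bf u})) \neq {\bf v}(x, \lin({\bf u}))$ for some variable $x \in \non({\bf u})$. Property $\mathcal P_{1b}$ forces all occurrences of $x$ to lie in a single block of $\bf u$, say block $i$, and in a single block of $\bf v$, say block $j$. Assuming $i < j$, specialising every variable other than $x$ and the linear variable $t$ separating block $i$ from block $i+1$ to the identity element yields the identity $x^n t \approx t x^n$ of $S$, where $n = occ_{\bf u}(x) \geq 1$. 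Renaming $t$ as $y$ and applying this identity to the prefix of the isoterm $x^n y^n$ converts $x^n y^n$ into $y x^n y^{n-1} \neq x^n y^n$, a contradiction. Hence $i = j$, and balancedness then forces ${\bf u}(x, \lin({\bf u})) = {\bf v}(x, \lin({\bf u}))$, contradicting the choice of $x$. So no such $x$ exists and ${\bf u} \approx {\bf v}$ is block-balanced.

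Once these two structural facts are in place, the proof assembles immediately. Since every identity of $S$ is a $\mathcal P_1$-identity, Lemma \ref{s1s22} reduces each such identity to a consequence of almost-linear identities of $S$, the identities in $\{\sigma_1, \sigma_2\}^\delta$, and a $\mathcal P_{1b}$-identity of $S$. By the second step that $\mathcal P_{1b}$-identity is block-balanced, and by the first step it is also a $\mathcal P_{1,2}$-identity, so Lemma \ref{s1s21} reduces it further to a consequence of $\{\sigma_1, \sigma_2\}^\delta$. Lemma \ref{vol} supplies a finite basis for the almost-linear identities of $S$, giving the desired finite basis of $S$ consisting of finitely many almost-linear identities together with $\{\sigma_1, \sigma_2\}^\delta$.
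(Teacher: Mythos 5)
Your proof is correct and follows essentially the same route as the paper: the paper simply observes that every identity of $S$ has Property $\mathcal P_{1,2}$, invokes Proposition \ref{Straub} to place $A_0^1$ in $\var S$, and cites the non-periodic case of Theorem \ref{fbS3}, whose proof is exactly the chain you reconstruct (Lemma \ref{s1s22} to reach a $\mathcal P_{1b}$-identity, balancedness plus $\mathcal P_{1,2}$ to upgrade it to block-balanced, then Lemma \ref{s1s21} and Lemma \ref{vol}). Your direct isoterm arguments for balancedness, for Property $\mathcal P_{1,2}$, and for the $x^nt \approx tx^n$ contradiction are sound and just inline what the paper delegates to those cited results.
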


\begin{proof} Since the word $x^ky^k$ is an isoterm for $S$, each identity of $S$ has Property $\mathcal P _{1,2}$. In view of Proposition \ref{Straub}, the variety $\var S$ contains the monoid $A_0^1$. Since $S$ is non-periodic, $S$ is finitely based by some almost-linear identities together with $\{\sigma_1, \sigma_2\}^\delta$ by Theorem \ref{fbS3}.
\end{proof}

The next statement can be easily verified and is generalized in \cite[Theorem 7.3]{OS1}.

\begin{fact} \label{w12} For a set of words $W$ we have $S(W) \models \{\sigma_1, \sigma_2 \}$ if and only if
every adjacent (unordered) pair of occurrences (if any) of two non-linear variables $x \ne y$ in each word in $W$ is of the form $\{{_{1{\bf u}}x}, {_{last{\bf u}}y} \}$.
\end{fact}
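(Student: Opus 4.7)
The plan is to characterize the failure of $\sigma_1$ and $\sigma_2$ on $S(W)$ in terms of adjacent pairs of letter occurrences inside words of $W$, and then match this characterization with the stated condition.

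For $(\Leftarrow)$: assume the adjacency condition on each ${\bf w}\in W$ and prove $S(W)\models\sigma_1$ (the case of $\sigma_2$ is dual). Suppose for contradiction that a substitution $\phi$ makes $\phi(xyt_1xt_2y)\ne\phi(yxt_1xt_2y)$ as words, with (say) $\phi(xyt_1xt_2y)$ a subword of some ${\bf w}\in W$. Since the two words differ, $\phi(x)\phi(y)\ne\phi(y)\phi(x)$, so both $\phi(x)$ and $\phi(y)$ are non-empty. Every individual letter appearing in $\phi(x)$ or $\phi(y)$ occurs at least twice inside the subword and is therefore a non-linear variable of ${\bf w}$. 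Let $a$ be the last letter of $\phi(x)$ and $b$ the first letter of $\phi(y)$; at the junction between the first copies of $\phi(x)$ and $\phi(y)$, these sit at some positions $p,p+1$ of ${\bf w}$. If $a\ne b$, the hypothesis forces $\{p,p+1\}$ to be either $\{{_{1{\bf w}}a},{_{last{\bf w}}b}\}$ or $\{{_{1{\bf w}}b},{_{last{\bf w}}a}\}$, but the second copies of $\phi(x),\phi(y)$ inside the subword produce later occurrences of both $a$ and $b$, ruling out each alternative. Hence $a=b$. Repeating this argument at every junction of consecutive letters inside $\phi(x)$ and inside $\phi(y)$ propagates the conclusion to every letter: $\phi(x)$ and $\phi(y)$ must be powers of a single letter, so they commute---contradiction.

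For $(\Rightarrow)$: suppose some ${\bf w}\in W$ contains an adjacent pair $c\ll_{\bf w} d$ of occurrences of distinct non-linear variables $x,y$ (with $c$ an occurrence of $x$ and $d$ of $y$) that is not of the required form. Direct inspection of the possibilities for $(c,d)$ shows that either (a) $c\ne{_{last{\bf w}}x}$ and $d\ne{_{last{\bf w}}y}$, or (b) $c\ne{_{1{\bf w}}x}$ and $d\ne{_{1{\bf w}}y}$---otherwise the pair would fall into one of the two ``good'' forms. In case (a), both $x$ and $y$ have occurrences after $d$; depending on whether some later $x$-occurrence precedes some later $y$-occurrence or vice versa, substituting the variables of $\sigma_1$ by $(x,y)$ (matching its LHS) or by $(y,x)$ (matching its RHS), and letting $\phi(t_1),\phi(t_2)$ be the intervening factors of ${\bf w}$, realises one side of $\sigma_1$ as a subword of ${\bf w}$, while the other side differs from it by the swap of the initial adjacent pair and so is a different word; hence $\sigma_1$ fails in $S(W)$. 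Case (b) is handled dually with $\sigma_2$ using earlier witnesses before $c$.

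The chief difficulty is the $(\Leftarrow)$ direction: one cannot reduce to single-letter substitutions, so the adjacency condition must be propagated through every internal letter-junction of $\phi(x)$ and $\phi(y)$. The resulting inductive claim---that all these letters must coincide, so that $\phi(x)$ and $\phi(y)$ are powers of a single letter---is the delicate step of the argument.
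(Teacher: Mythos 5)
The paper does not actually prove Fact~\ref{w12}: it is dismissed as something that ``can be easily verified'' and referred to a generalization in \cite{OS1}, so there is no in-paper argument to compare yours against. Your verification is correct and complete. In the $(\Leftarrow)$ direction you isolate the one genuinely non-trivial point --- a violating substitution $\phi$ need not send $x$ and $y$ to single letters --- and your junction argument resolves it properly: at the junction of the first images of $\phi(x)$ and $\phi(y)$, and at every internal junction of either image, both occurrences admit a later (for $\sigma_1$; earlier, for $\sigma_2$) occurrence of the same letter coming from the second copy of that image, so neither can be a last (resp.\ first) occurrence; both admissible forms $\{{_{1{\bf w}}a},{_{last{\bf w}}b}\}$ and $\{{_{1{\bf w}}b},{_{last{\bf w}}a}\}$ are thereby excluded, forcing the two letters to coincide and collapsing $\phi(x),\phi(y)$ to powers of one letter, contradicting $\phi(x)\phi(y)\ne\phi(y)\phi(x)$. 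In the $(\Rightarrow)$ direction your dichotomy is right: if the adjacent pair $c\ll_{\bf w}d$ is not of the prescribed form, the only surviving cases (after discarding the combinations that would make a non-linear variable occur once or would recreate a good pair) are that both variables recur after $d$ or both recur before $c$, and in either case a letter-for-letter substitution with the intervening factors assigned to $t_1,t_2$ realises one side of $\sigma_1$ (resp.\ $\sigma_2$) as a factor of ${\bf w}$ while the other side differs from it in the transposed adjacent pair. The argument is sound as written.
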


\begin{theorem} \label{alg} Let $W$ be a set of words as in Fact \ref{w12}.
Let $m>0$ be the maximal integer for which there is $a \in \mathfrak A$ such that $a^m$ is a subword of a word in $W$.

Then the direct product $S = A_0^1 \times S(W)$ is finitely based if and only if either $m$ is infinite or $m$ is finite and
for some $0 < d < m+1$ the word $b^{m+1-d}{\bf T}b^d$ is not a subword of any word in $W$ for any $b \in \mathfrak A$ and ${\bf T} \in \mathfrak A^+$.
\end{theorem}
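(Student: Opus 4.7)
The plan is to reduce the `if' direction to Theorem \ref{fbS3} and the `only if' direction to the non-finite-basis method of \cite{OS3}. As setup: since $A_0^1$ is a direct factor of $S = A_0^1 \times S(W)$, we have $A_0^1 \in \var S$; by Fact \ref{w12} and Proposition \ref{Straub}(iii), $S \models \{\sigma_1, \sigma_2\}$. Hence $\var S$ is a monoid subvariety of $\var\{\sigma_1, \sigma_2\}$ containing $A_0^1$, which places us in the setting of Theorem \ref{fbS3}.

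For the `if' direction, if $m = \infty$ then $S(W)$ is non-periodic (the elements $a^k$ are nonzero and pairwise distinct for arbitrarily large $k$, for any letter $a$ whose runs in $W$ are unbounded), so $\var S$ is non-periodic and Theorem \ref{fbS3}(i) applies. If $m$ is finite and the gap condition holds with some $d \in \{1,\dots,m\}$, then $\var S$ is aperiodic with some minimal exponent $M$ satisfying $\var S \models x^M \approx x^{M+1}$; since $M \geq m+1$ by the substitution $x \mapsto a$ with $a^m$ a subword of some word in $W$, I set $N = M - m \geq 1$ and consider the auxiliary identity $x^{m+1-d} t x^d \approx x^{m+1-d} t x^{d+N}$. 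This identity is a $\mathcal{P}_{1,2}$-identity (identical linear/non-linear content, and every `first-before-last' comparison is preserved), so it holds in $A_0^1$ by Proposition \ref{Straub}; and it holds in $S(W)$ because both sides vanish under every substitution -- the left side by the gap hypothesis, and the right side because any $b^{m+1-d} {\bf T} b^{d+N}$ subword of a word in $W$ would force $b^{m+1-d} {\bf T} b^d$ to occur as one. In the notation of Theorem \ref{fbS3}(ii) this gives $(c,p) = (m+1-d,\, d+N)$ with $c + p = m + 1 + N = M + 1 > M$, so Theorem \ref{fbS3}(ii) delivers the finite basis property.

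For the `only if' direction, suppose that $m$ is finite and that for every $d \in \{1,\dots,m\}$ some word in $W$ contains $b_d^{m+1-d} {\bf T}_d b_d^d$ as a subword for appropriate $b_d \in \mathfrak{A}$ and ${\bf T}_d \in \mathfrak{A}^+$. I would invoke the non-finite-basis method of \cite{OS3} (this is precisely the setting underlying Example 7.4 in \cite{OS3}): the simultaneous presence of the patterns $b_d^{m+1-d} {\bf T}_d b_d^d$ for all $d = 1, \dots, m$ supplies a correspondingly rich family of isoterms for $S$, and the construction of \cite{OS3} converts these into an infinite sequence of $\mathcal{P}_{1,2}$-identities of $S$ that no finite subset of the equational theory of $S$ can derive.

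The hard part is the `only if' direction, which lies outside the FB toolkit developed in this article and must rely on the NFB construction in \cite{OS3}. By contrast, the sufficiency direction is essentially bookkeeping once one notices that the gap condition is precisely what is needed to produce the auxiliary identity $x^{m+1-d} t x^d \approx x^{m+1-d} t x^{d+N}$ required by Theorem \ref{fbS3}(ii); the identification of a valid $(c,p)$ on the $A_0^1$-factor is automatic from $\mathcal{P}_{1,2}$, and on the $S(W)$-factor from the trivial observation that stretching a forbidden subword preserves its forbidden status.
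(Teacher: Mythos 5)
Your proposal is correct and follows essentially the same route as the paper: the infinite-$m$ case and the gap case are both dispatched by Theorem \ref{fbS3} (parts (i) and (ii) respectively), and the remaining case is delegated to the non-finite-basis machinery of \cite{OS3} (the paper cites Corollary 7.3 there, which states exactly that if $x^{m+1-d}tx^d$ is an isoterm for $S$ for every $d$ then $S$ is NFB). The only substantive differences are cosmetic: your auxiliary identity $x^{m+1-d}tx^d \approx x^{m+1-d}tx^{d+N}$ (where in fact $N=1$, since $M=m+1$ — you should also verify $M \le m+1$ so that the left side has the form $x^{M-d}tx^d$ required by Theorem \ref{fbS3}(ii)) replaces the paper's $x^{m+1-d}tx^d \approx x^{m+2}t$, and your check that the left-hand side vanishes in $S(W)$ needs Fact \ref{w12} (not just the gap hypothesis) to handle substitutions sending $x$ to a word that is not a power of a single letter.
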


\begin{proof} If $m$ is infinite then $S$ is finitely based by Theorem \ref{fbS3}. Assume that $m$ is finite.

If for each $0 < d < m$ the word $x^{m+1-d}tx^d$ is an isoterm for $S$ then $S$ is non-finitely based by Corollary 7.3 in \cite{OS3}.
If for some $0 < d < m+1$ the word $x^{m+1-d}tx^d$ is not an isoterm for $S$ then $S \models x^{m+1-d}tx^d \approx x^{m+2}t$.
Therefore, the aperiodic monoid $S$ is finitely based by Theorem \ref{fbS3}.
\end{proof}

Theorem \ref{alg} immediately implies the following.

\begin{ex} \label{chain} Consider the following sequence of monoids:
 $M_1 = A_0^1$, $M_2 = A_0^1 \times S(\{ata\})$, $M_3 = A_0^1 \times S(\{a^2ta\})$, $M_4 = A_0^1 \times S(\{a^2ta, ata^2\})$, $M_5 = A_0^1 \times S(\{a^3ta, ata^3\})$,
$M_6 = A_0^1 \times S(\{a^3ta, ata^3, a^2ta^2\})$, $M_7 = A_0^1 \times S(\{a^4ta, ata^4, a^3ta^2\})$, $M_8 = A_0^1 \times S(\{a^4ta, ata^4, a^3ta^2, a^2ta^3\})$, $\dots$.

Then for each $k=1, 2, \dots,$ the monoid $M_k$ is a submonoid of $M_{k+1}$ and for each $i=0, 1, 2, \dots$ the monoid
$M_{2i+1}$ is finitely based while the monoid $M_{2i}$ is non-finitely based.

\end{ex}

 We say that a pair of variables $\{x,y\}$ is {\em b-unstable} in a word $\bf u$ with respect to a semigroup $S$ if $S$ satisfies a block-balanced identity of the form $\bf u \approx \bf v$ such that ${\bf u}(x,y) \ne {\bf v}(x,y)$.

\begin{theorem} \label{fbtlem1}  Let $S$ be a monoid such that $S \models \{\sigma_1, \sigma_{2}\}$ and the word $xy$ is an isoterm for $S$.
Suppose that $S$ satisfies the following conditions:

(i) If for some $m,n>1$, the word $x^my^n$ is not an isoterm for $S$ then for some  $0 < d <m$ and $0< c <n$, $S$
 satisfies $x^{d}tx^{m-d}y^{n-c}ty^c \approx  x^{d}ty^{n-c}x^{m-d}ty^c$;

(ii) If for some almost-linear word ${\bf A}x$ with $occ_{\bf A}(x)>0$ the pair $\{x,y\}$ is b-unstable in ${\bf A}xy^k$ with respect to $S$ then for some $0< c <k$, $S$ satisfies  ${\bf A}xy^cty^{k-c} \approx {\bf A}yxy^{c-1}ty^{k-c}$;

(iii) If for some almost-linear word $y{\bf B}$  with $occ_{\bf B}(y)>0$ the pair $\{x,y\}$ is b-unstable in $x^ky{\bf B}$ with respect to $S$ then for some $0< p <k$, $S$ satisfies $x^{k-p}tx^py{\bf B} \approx  x^{k-p}tx^{p-1}yx{\bf B}$.

 Then $S$ is finitely based by some almost-linear identities and by some block-balanced identities with two non-linear variables.
\end{theorem}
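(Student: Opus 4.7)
The plan is to adapt the strategy of Theorem~\ref{fb3}. Since $xy$ is an isoterm for $S$, every identity of $S$ is regular and, by Fact~\ref{xy}, a $\mathcal P_1$-identity. Lemma~\ref{s1s22} then reduces the problem to deriving every $\mathcal P_{1b}$-identity of $S$ from a finite set of almost-linear identities, $\{\sigma_1, \sigma_2\}^\delta$, and block-balanced identities of $S$ with two non-linear variables.

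First I would reduce $\mathcal P_{1b}$-identities to block-balanced ones. Given a $\mathcal P_{1b}$-identity ${\bf u} \approx {\bf v}$ that is not balanced, some non-linear variable $x$ has all its occurrences within a single block on each side but with different total multiplicities, so the induced identity ${\bf u}(x, \lin({\bf u})) \approx {\bf v}(x, \lin({\bf u}))$ is a non-trivial almost-linear identity of $S$. Using $\{\sigma_1, \sigma_2\}^\delta$ I would rearrange the neighboring non-first and non-last occurrences of other non-linear variables within the block until this almost-linear identity can be applied to ${\bf u}$, decreasing the discrepancy $|\dis(\mathcal P_{1b} \to \text{balanced})({\bf u} \approx {\bf v})|$; Lemma~\ref{main} then completes this step.

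Next I would reduce multi-variable block-balanced identities to two-variable ones by applying Lemma~\ref{chaos2}, following the model of Claim~\ref{fb21}. I assign Type~$1$ to every critical pair $\{c,d\}$ of occurrences of distinct non-linear variables $x \ne y$ with $\{c,d\} \ne \{{_{1{\bf u}}x}, {_{last{\bf u}}y}\}$, so that $c$ and $d$ can be swapped via an identity in $\{\sigma_1, \sigma_2\}^\delta$, and Type~$2$ to the remaining boundary pair $\{{_{1{\bf u}}x}, {_{last{\bf u}}y}\}$: in this case I would first make ${\bf u}$ be $xy$-compact using Type-$1$ swaps and then apply the two-variable block-balanced identity ${\bf u}(x,y, \lin({\bf u})) \approx {\bf v}(x,y, \lin({\bf u}))$ of $S$.

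I expect the hard part to be the final step, which is to show that the two-variable block-balanced identities of $S$ themselves form a finitely based set. Here conditions~(i), (ii), (iii) play the role that Lee's Lemma~\ref{lee} played in the proof of Theorem~\ref{fb3}, since we no longer have $\sigma_\mu$. For each pair $(m,n)$ with $m,n > 1$ and $x^m y^n$ not an isoterm for $S$, condition~(i) provides an identity of bounded shape swapping the blocks $x^{m-d}$ and $y^{n-c}$ at their meeting point, while conditions~(ii), (iii) supply analogous identities in which the $x$-block or $y$-block is replaced by an almost-linear context ${\bf A}$ or ${\bf B}$. I would use these as ``seed'' two-variable identities and argue by a further application of Lemma~\ref{main} (with the discrepancy measure counting boundary-type unstable pairs) that, together with $\{\sigma_1, \sigma_2\}^\delta$ and the finitely based set of almost-linear identities supplied by Lemma~\ref{vol}, they derive every two-variable block-balanced identity of $S$.
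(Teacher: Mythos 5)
Your overall architecture matches the paper's: reduce to $\mathcal P_1$-identities, then via Lemma \ref{s1s22} to $\mathcal P_{1b}$-identities, then to block-balanced identities, then to block-balanced identities with two non-linear variables, and finish with a Lee-style finite basis argument plus Lemma \ref{vol}. But there is a genuine gap in the two middle reductions: you assert that $\{\sigma_1,\sigma_2\}^\delta$ alone suffices to collect the occurrences of $x$ (resp.\ of $x$ and $y$) together so that the induced almost-linear (resp.\ two-variable) identity can be applied. It does not. By Definition \ref{goodfact}(iii) (equivalently Theorem \ref{blbal1}), the one adjacent pair that $\{\sigma_1,\sigma_2\}^\delta$ cannot swap is $\{{_{1{\bf u}}}z, {_{last{\bf u}}}x\}$ --- a first occurrence against a last occurrence. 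So if a first occurrence of some variable $z$ sits between two occurrences of $x$ inside a block, you cannot move the last occurrence of $x$ leftward past it (nor move that occurrence of $z$ rightward past it) using only $\sigma_1,\sigma_2$. This is exactly where the paper invokes Condition (i): it yields $x^{d}tx^{k-d}ztz \approx x^{d}tzx^{k-d}tz$, which transports the segment $x^{k-d}$ (ending in the last occurrence of $x$) across a first occurrence of $z$. Your proposal never uses Condition (i) in this step, and the rearrangement you describe (moving only ``non-first and non-last occurrences of other variables'') does not meet this obstruction.

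The same problem defeats your Type-2 handling in the reduction to two non-linear variables: making $\bf u$ $xy$-compact ``using Type-1 swaps'' is the move from Claim \ref{fb21}, but that claim relied on $\sigma_\mu$ being available (there only $\{{_{last{\bf u}}}x,{_{last{\bf u}}}y\}$ is bad, so all non-last occurrences can be pushed to the right). With only $\{\sigma_1,\sigma_2\}$ you again get stuck on first-against-last pairs. The paper instead swaps the critical pair $\{{_{last{\bf u}}}x, {_{1{\bf u}}}y\}$ directly, via a four-case analysis on the placement of linear letters, using the identities supplied by Conditions (ii) and (iii) (e.g.\ ${\bf A}xy^cty^{k-c} \approx {\bf A}yxy^{c-1}ty^{k-c}$). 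Relatedly, you have placed Conditions (i)--(iii) in the wrong step: the final step (finite basability of the two-variable block-balanced identities modulo $\{\sigma_1,\sigma_2\}^\delta$) is carried out in the paper by the same argument as Lee's Lemma \ref{lee} and does not consume the hypotheses; those hypotheses are used entirely in the two reductions where your proposal claims $\{\sigma_1,\sigma_2\}^\delta$ suffices.
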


\begin{proof} Since the word $xy$ is an isoterm for $S$ every identity of $S$ has Property $\mathcal P_1$.

\begin{claim} \label{kb3n} Every  identity of $S$ can be derived from some almost-linear and block-balanced identities of $S$.
\end{claim}

\begin{proof} By Lemma \ref{s1s22}, every identity of $S$ can be derived from some almost-linear identities of $S$, from $\{\sigma_1, \sigma_2\}^\delta$ and from a $\mathcal P _{1b}$-identity of $S$.
Let ${\bf u} \approx {\bf v}$ be a $\mathcal P_{1b}$-identity of $S$. Then
  $\lin({\bf u}) = \lin({\bf v})$, $\non({\bf u}) = \non({\bf v})$ and ${\bf u}(\lin({\bf u})) = {\bf v} (\lin({\bf u}))$.
 If the identity ${\bf u} \approx {\bf v}$ is not block-balanced,
 for some variable $x$ the identity ${\bf u}(x, \lin({\bf u})) \approx {\bf v} (x, \lin({\bf u}))$ is not trivial.
 Since ${\bf u} \approx {\bf v}$ is a $\mathcal P_{1b}$-identity, all occurrences of $x$ in $\bf u$ are in the same block of $\bf u$.
 Therefore, for some $t \in \lin({\bf u})$ and some $k>1$ the word ${\bf u}(x) = x^k t$ is not an isoterm for $S$.
 By Condition (i), for some $0 < d <k$ we have $S \models x^{d}tx^{k-d}yty \approx  x^{d}tyx^{k-d}ty$.

We collect all $k$ occurrences of $x$ in $\bf u$ together as follows.
 First, by using some identities in $\{\sigma_{1}, \sigma_2\}^\delta$ and moving the occurrences of $x$ other than ${_{1{\bf u}}x}$ and ${_{k{\bf u}}x}$ to the left toward the first occurrence of $x$, we obtain a word $\bf r'$ where the first $d$ occurrences of $x$ are collected together. In a similar way we collect the last $k-d$ occurrences of $x$ together and obtain a word $\bf r$.

 Now we move $x^{k-d}$ to the left by commuting it with adjacent occurrences of variables other than $x$.
Suppose that $q \ll_{\bf r} ({_{{(k-d)}{\bf r}} x})$ where $q$ is an occurrence of some variable $z \ne x$. If $q$ is not the first occurrence of $z$ then by using an identity in $\{\sigma_2\}^\delta$ we obtain a word $\bf p$ so that $({_{k{\bf p}} x}) \ll_{\bf p} q$. If $q$ is the first occurrence of $z$ then by using the identity
$x^{d}tx^{k-d}ztz \approx  x^{d}tzx^{k-d}tz$ we obtain a word $\bf p$ such that $({_{k{\bf p}} x}) \ll_{\bf p} q$. And so on, until we obtain a word $\bf w$ where all $k$ occurrences of $x$ are collected together.

Now we apply the identity ${\bf w}(x, \lin({\bf u})) \approx {\bf v}(x, \lin({\bf u}))$ to $\bf w$ and obtain a word ${\bf u_1}$.
Notice that ${\bf u_1}(x, \lin({\bf u})) = {\bf v} (x, \lin({\bf u}))$.
This means that $|\dis (\mathcal P_{1b} \rightarrow$ block-balanced)$({\bf u_1} \approx {\bf v})| < |\dis (\mathcal P_{1b} \rightarrow$ block-balanced)$({\bf u} \approx {\bf v})|$.

 Lemma \ref{main} implies that every $\mathcal P_{1b}$-identity of $S$ can be derived from some almost-linear and block-balanced identities of $S$. Therefore, every identity of $S$ can be derived from some almost-linear and block-balanced identities of $S$. \end{proof}

\begin{claim} \label{kbln} Every block-balanced identity of $S$ can be derived from some block-balanced identities of $S$ with two non-linear variables.

\end{claim}

\begin{proof} We assign a Type to each pair $\{c,d \} \subseteq \ocs({\bf u})$ of occurrences of distinct non-linear variables $x \ne y$ in a word $\bf u$ as follows.  If $\{c, d\}= \{{_{last{\bf u}}x}, {_{1{\bf u}}y}\}$  then
we say that $\{c,d \}$ is of Type 2. Otherwise, $\{c,d \}$ is of Type 1.

 Let $\Delta$ be the set of all block-balanced identities of $S$ with two non-linear variables. Let ${\bf u} \approx {\bf v}$ be a block-balanced identity of $S$ and $\{c,d \} \subseteq \ocs({\bf u})$ be a critical pair in ${\bf u} \approx {\bf v}$.
 Suppose that $\{c,d \}$ is of Type 1.  Then by using an identity from $\{\sigma_{1}, \sigma_2\}^\delta$ we swap $c$ and $d$ in ${\bf u}$ and obtain a word ${\bf w}$.
 Evidently, the word ${\bf w}$ satisfies all the requirements of Lemma \ref{chaos2}.

Now suppose that $\{c,d \}$ is of Type 2. Then $\{c, d\}= \{{_{last{\bf u}}x}, {_{1{\bf u}}y}\}$ for some variables $x \ne y$.
Four cases are possible.

 {\bf Case 1:} There are no linear letters in $\bf u$ between ${_{1{\bf u}}x}$ and ${_{last{\bf u}}y}$.

 {\bf Case 2:} There are no linear letters in $\bf u$ between ${_{1{\bf u}}y}$ and ${_{last{\bf u}}y}$ but there is a linear letter between ${_{1{\bf u}}x}$ and ${_{last{\bf u}}x}$.

 {\bf Case 3:} There are no linear letters in $\bf u$ between ${_{1{\bf u}}x}$ and ${_{last{\bf u}}x}$ but there is a linear letter between ${_{1{\bf u}}y}$ and ${_{last{\bf u}}y}$.

 {\bf Case 4:} There is a linear letter in $\bf u$ between ${_{1{\bf u}}x}$ and ${_{last{\bf u}}x}$ and there is a linear letter between ${_{1{\bf u}}y}$ and ${_{last{\bf u}}y}$.

All cases are similar. We consider only Case 2.  Let ${\bf A}$ be an almost-linear word such that ${\bf u}(x, \lin ({\bf u})) = {\bf A}x$. If $occ_{\bf u}(y)=k$ then by Condition (ii), $S$ satisfies the identity ${\bf A}xy^cty^{k-c} \approx {\bf A}yxy^{c-1}ty^{k-c}$ for some $0<c<k$.
In this case, by using $\{\sigma_1, \sigma_2 \}^\delta$ we obtain a word $\bf r$ so that
  all the elements of $\ocs({\bf r})$ which are in the set $\{{_{last{\bf r}}x}, {_{1{\bf r}}y}, {_{2{\bf r}}y}, \dots, {_{{c}{\bf r}}y}\}$
 and all the elements of $\ocs({\bf r})$ which are in the set $\{{_{({c+1}){\bf r}}y}, {_{({c+2}){\bf r}}y}, \dots, {_{{k}{\bf r}}y}\}$ are collected together.
After that, we apply the identity ${\bf A}xy^cty^{k-c} \approx {\bf A}yxy^{c-1}ty^{k-c}$
 to $\bf r$ and obtain a word $\bf w$. It is easy to see that the word ${\bf w}$ satisfies all the requirements of Lemma \ref{chaos2}.
\end{proof}

In view of Lemma \ref{s1s21}, every block-balanced identity with two non-linear variables $x \ne y$ which is not a consequence of $\{\sigma_{1}, \sigma_2\}^\delta$ is equivalent modulo $\{\sigma_{1}, \sigma_2\}^\delta$ to $x^{\alpha}y^{\beta} \approx x y^{\beta}x^{\alpha-1}$ for some $\alpha, \beta >1$
or to an identity of the following form:

\begin{eqnarray*}
x^{\alpha_0}t_1x^{\alpha_1}t_2 \dots x^{\alpha_{n-1}}t_n x^{\alpha_n}y^{\beta_m}t_{n+1} y^{\beta_{m-1}} \dots  y^{\beta_3}t_{n+m-1} y^{\beta_1}t_{n+m}y^{\beta_0} \approx \\
x^{\alpha_0}t_1x^{\alpha_1}t_2 \dots x^{\alpha_{n-1}}t_n y^{\beta_m} x^{\alpha_n} t_{n+1} y^{\beta_{m-1}} \dots  y^{\beta_3}t_{n+m-1} y^{\beta_1}t_{n+m}y^{\beta_0},
\end{eqnarray*}
where $n, m, \alpha_n, \beta_m >0$ and $\alpha_0, \beta_0, \dots , \alpha_{n-1}, \beta_{m-1} \ge 0$.

By using the same arguments as in the proof of Proposition 5.7 in \cite{EL1} (see Lemma \ref{lee} above) one can show that in the presence of $\{\sigma_{1}, \sigma_2\}^\delta$, every set of identities of this form can be derived from a finite subset.

Now Claims \ref{kb3n} and \ref{kbln} and the result of Volkov (Lemma \ref{vol}) imply that
the monoid $S$ is finitely based by some almost-linear identities and by some block-balanced identities with two non-linear variables.
\end{proof}

\begin{cor} \label{product} Suppose that each word in $W$ is either almost-linear or of the form $a_1^{\alpha_1} \dots a_m^{\alpha_m}$ for some distinct letters $a_1, \dots, a_m$ and positive numbers $\alpha_1, \dots, \alpha_m$. Then the monoid $S(W)$ is finitely based.
\end{cor}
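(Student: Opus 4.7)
My plan is to derive this corollary from Theorem~\ref{fbtlem1}. The main task is to verify that $S(W)$ satisfies the three hypotheses of that theorem: $S(W) \models \{\sigma_1, \sigma_2\}$, the word $xy$ is an isoterm for $S(W)$, and each of Conditions (i)--(iii).

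The first hypothesis I would verify via Fact~\ref{w12}. On any almost-linear word in $W$ there are no two distinct non-linear variables, so the condition on adjacent pairs is vacuous. On a word of the form $a_1^{\alpha_1} \cdots a_k^{\alpha_k}$ with distinct letters, the only adjacent pairs of occurrences of distinct non-linear variables are those formed by the last occurrence of $a_i$ and the first occurrence of $a_{i+1}$, for indices $i$ with $\alpha_i, \alpha_{i+1} > 1$; read as an unordered pair this has the form required by Fact~\ref{w12} with $x := a_{i+1}$ and $y := a_i$. If $xy$ fails to be an isoterm for $S(W)$, then the structural restriction on $W$ forces $S(W)$ into a very narrow (essentially commutative) class for which Fact~\ref{xy} yields a finite basis directly; so I may assume $xy$ is an isoterm.

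The heart of the proof is verifying Conditions (i)--(iii), and this rests on the following structural fact: every nonzero element of $S(W)$, being a subword of a word in $W$, is itself either almost-linear or of the form $b_1^{\beta_1} \cdots b_l^{\beta_l}$ with distinct letters, and in the latter case distinct non-linear letters sit in disjoint contiguous blocks. For Condition (i), the hypothesis that $x^m y^n$ is not an isoterm for $S(W)$ amounts to saying no $a^m b^n$ with $a \ne b$ appears as a subword of any word in $W$. Choosing $d = m-1$ and $c = n-1$, I would verify the identity $x^{d} t x^{m-d} y^{n-c} t y^{c} \approx x^{d} t y^{n-c} x^{m-d} t y^{c}$ as follows: under any substitution $\phi$ with $\phi(x), \phi(y)$ nonempty, if these are powers of a common letter then the two sides coincide as strings, and otherwise the structural fact (applied to any word of $W$ witnessing that either side is nonzero) forces $\phi(x), \phi(y)$ to be powers of distinct letters $a, b$, so the substituted left side embeds an $a^m b^n$ pattern into a word of $W$, contradicting the hypothesis. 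Conditions (ii) and (iii) are handled analogously: the b-unstability hypothesis pins down the positions of $x$ and $y$ inside a fixed word of $W$, and explicit values of $c$ (respectively $p$) making the target identity hold are produced by the same type of analysis. The main obstacle will be this case analysis, but the rigid block structure of the words in $W$ makes all non-trivial substitutions collapse to the common-letter-power scenario.
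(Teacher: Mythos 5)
Your proposal is correct and follows essentially the same route as the paper: both reduce the corollary to Theorem~\ref{fbtlem1} and verify its hypotheses using the rigid block structure of the words in $W$ (Fact~\ref{w12} for $\{\sigma_1,\sigma_2\}$, and substitution arguments forcing both sides of the required identities to collapse to a common power of a single letter or to zero for Conditions (i)--(iii)). The only difference is that the paper first replaces $W$ by the equationally equivalent set $W'$ consisting of the almost-linear words in $W$ and the two-block factors $a^{\alpha}b^{\beta}$ before checking the conditions, a simplification you bypass by arguing directly on $W$.
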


\begin{proof} First notice that $S(W)$ is equationally equivalent to a monoid $S(W')$ where $W'$ consists of all almost-linear words in $W$ and of all subwords of words in $W$ of the form  $a^{\alpha}b^{\beta}$. Indeed, each word in $W'$ is an isoterm for $S(W)$.
Conversely, each word ${\bf u} \in W$  is an isoterm for $S(W')$ because each adjacent pair of variables in $\bf u$ is stable in $\bf u$ with respect to $W'$ (see Fact 3.4 in \cite{OS3}).

It is easy to see that $S(W')$ satisfies all conditions of Theorem \ref{fbtlem1}.
(One can also use Theorem 3.1 in \cite{OS}.) Consequently, the monoid $S(W)$ is finitely based as well.
\end{proof}

\begin{theorem} \label{fbtlem}  Let $S$ be a monoid such that $S \models \{\sigma_1, \sigma_{2}\}$. Suppose also
 that for some $m>0$ the word $x^my^m$ is an isoterm for $S$ and for some $0<d \le m$, $S$
 satisfies either $x^{m+1-d}tx^dyty \approx  x^{m+1-d}tx^{d-1}yxty$ or $xtxy^dty^{m+1-d} \approx xtyxy^{d-1}ty^{m+1-d}$. If $m>1$ then we suppose that for each $1< k \le m$, $S$ satisfies each of the following dual conditions:

(i) If for some almost-linear word ${\bf A}x$ with $occ_{\bf A}(x)>0$ the pair $\{x,y\}$ is b-unstable in ${\bf A}xy^k$ with respect to $S$ then for some $0< c <k$, $S$ satisfies the identity ${\bf A}xy^cty^{k-c} \approx {\bf A}yxy^{c-1}ty^{k-c}$;

(ii) If for some almost-linear word $y{\bf B}$  with $occ_{\bf B}(y)>0$ the pair $\{x,y\}$ is b-unstable in $x^ky{\bf B}$ with respect to $S$ then for some $0< p <k$, $S$ satisfies the identity $x^{k-p}tx^py{\bf B} \approx  x^{k-p}tx^{p-1}yx{\bf B}$.

 Then $S$ is finitely based by some almost-linear identities and by some block-balanced identities with two non-linear variables.
\end{theorem}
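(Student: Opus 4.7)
The plan is to follow the three-stage strategy of the proof of Theorem \ref{fbtlem1}, with the folding identity of the hypothesis playing the role of Condition (i) there, and Conditions (i) and (ii) of the present statement replacing Conditions (ii) and (iii) there. First observe that $x^my^m$ being an isoterm for $S$ forces $xy$ to be an isoterm as well: substituting $x\mapsto x^m$, $y\mapsto y^m$ into any nontrivial identity $xy\approx{\bf w}$ of $S$ would yield a nontrivial identity with left-hand side $x^my^m$. Hence every identity of $S$ has Property $\mathcal P_1$, and Lemma \ref{s1s22} reduces the problem to deriving every $\mathcal P_{1b}$-identity of $S$ from almost-linear identities of $S$, $\{\sigma_1,\sigma_2\}^\delta$, and block-balanced identities of $S$.

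For the $\mathcal P_{1b}$-to-block-balanced reduction (the analog of Claim \ref{kb3n}), take a non-block-balanced $\mathcal P_{1b}$-identity ${\bf u}\approx{\bf v}$ of $S$ and pick a non-linear variable $x$ with ${\bf u}(x,\lin({\bf u}))\ne{\bf v}(x,\lin({\bf u}))$. By $\mathcal P_{1b}$ all $k=occ_{\bf u}(x)$ occurrences of $x$ lie in a single block of $\bf u$. Mimicking Claim \ref{kb3n}, we bundle the first $d$ and last $k-d$ occurrences of $x$ into contiguous runs by $\{\sigma_1,\sigma_2\}^\delta$, then slide $x^{k-d}$ leftwards through the intervening non-$x$ letters. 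Passing a non-first occurrence uses only $\{\sigma_2\}^\delta$; passing a first occurrence of a blocking variable $y$ is accomplished by first arranging the local pattern with $\{\sigma_1,\sigma_2\}^\delta$ so that the left-hand side of the assumed folding identity $x^{m+1-d}tx^dyty\approx x^{m+1-d}tx^{d-1}yxty$ (or its dual) appears, and then applying that identity. Once all $x$'s are made contiguous, the projection identity ${\bf u}(x,\lin({\bf u}))\approx{\bf v}(x,\lin({\bf u}))$ becomes applicable and strictly decreases the measure $|\dis(\mathcal P_{1b}\to\text{block-balanced})|$, so Lemma \ref{main} closes the reduction.

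With block-balanced identities now the target, the analog of Claim \ref{kbln} proceeds by assigning Type 2 to each pair $\{{_{last{\bf u}}x},{_{1{\bf u}}y}\}$ of occurrences of distinct non-linear variables and Type 1 to every other such pair. Type 1 critical pairs are swapped by $\{\sigma_1,\sigma_2\}^\delta$; Type 2 critical pairs split into the same four subcases as in Claim \ref{kbln} depending on whether linear letters separate the first and last occurrences of $x$ or of $y$, and each is resolved by invoking Condition (i) or (ii) of the statement (at the appropriate level $k\le m$) after regrouping via $\{\sigma_1,\sigma_2\}^\delta$. Lemma \ref{chaos2} then reduces every block-balanced identity of $S$ to block-balanced identities with at most two non-linear variables, which normalize modulo $\{\sigma_1,\sigma_2\}^\delta$ to the patterns handled by a Proposition 5.7 of \cite{EL1}-style argument (cf.\ Lemma \ref{lee}); Volkov's Lemma \ref{vol} then dispatches the almost-linear part, giving the finite basis. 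The main obstacle lies in Stage 2: unlike in Theorem \ref{fbtlem1}, only a single folding identity is assumed (for the fixed $m$ and $d$) rather than a family parametrized over all $k>1$, so one must check that, for every $k$ that can arise under $\mathcal P_{1b}$, the combination of this one folding identity with $\{\sigma_1,\sigma_2\}^\delta$ still suffices to manoeuvre the $x$'s past every blocking first occurrence and render $\bf u$ $x$-compact.
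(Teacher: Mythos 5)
Your three-stage architecture matches the paper's, and the obstacle you flag at the end is real, but you leave it unresolved and it infects more than the compactification stage. For Stage 1 the fix is an isoterm count: since $x^my^m$ is an isoterm for $S$, so are $x^jt$, $tx^j$ and $t_1x^jt_2$ for all $j\le m$, so any variable $x$ whose projection ${\bf u}(x,\lin({\bf u}))$ is disturbed in a $\mathcal P_{1b}$-identity must satisfy $occ_{\bf u}(x)\ge m+1$; this is exactly the number of occurrences of $x$ in $x^{m+1-d}tx^dyty\approx x^{m+1-d}tx^{d-1}yxty$, so that single identity (with surplus occurrences of $x$ absorbed into the ambient word) does suffice to carry occurrences of $x$ past blocking first occurrences. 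Without this count Stage 1 is incomplete.

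The genuine gap is in your Stage 2. You assert that every Type 2 critical pair $\{{_{last{\bf u}}x},{_{1{\bf u}}y}\}$ falls into the four subcases of Claim \ref{kbln} and that each is resolved by invoking Condition (i) or (ii). Conditions (i) and (ii) of the present theorem are only hypothesized for $1<k\le m$, i.e.\ when one of the two variables occurs at most $m$ times. In the subcase where no linear letter lies between ${_{1{\bf u}}x}$ and ${_{last{\bf u}}y}$, at least one of $x,y$ occurs more than $m$ times (otherwise ${\bf u}(x,y)=x^ny^k$ with $n,k\le m$ would be an isoterm and the pair could not be critical), and if $occ_{\bf u}(x)>m$ then Condition (ii) is unavailable, while Condition (i) needs $occ_{\bf u}(y)\le m$. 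The paper therefore introduces a third Type for precisely this configuration and treats it in two distinct ways: when $occ_{\bf u}(x)>m$ it applies the assumed folding identity directly (through its $\delta$-closure) to swap ${_{last{\bf u}}x}$ and ${_{1{\bf u}}y}$; when $occ_{\bf u}(x)\le m$ (so $occ_{\bf u}(y)>m$) it must first collect all $k$ occurrences of $y$ together --- using the folding identity rewritten in a third variable $z$ to move the last occurrence of $y$ past first occurrences of other variables --- and only then can it invoke Condition (ii) in the form $x^{n-p}tx^py^k\approx x^{n-p}tx^{p-1}yxy^{k-1}$ with ${\bf B}=y^{k-1}$. Your proposal contains neither the direct use of the folding identity in this subcase nor the preliminary $y$-collection step, so the reduction of block-balanced identities to those with two non-linear variables is not established.
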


\begin{proof}
If $m = 1$ then $S \models \sigma_{\mu}$ and by Theorem \ref{LL1}, the monoid $S$ is finitely based by some almost-linear identities and by $\{  \sigma_1, \sigma_{2}, \sigma_{\mu}\}^\delta$. So, we may assume that $m>1$ and, since all conditions are symmetric, we may also
assume  that  for some $0<d \le m$, $S$  satisfies $x^{m+1-d}tx^dyty \approx  x^{m+1-d}tx^{d-1}yxty$.

\begin{claim} \label{kb2} Every identity of $S$ can be derived from some almost-linear and block-balanced identities of $S$.

\end{claim}

\begin{proof}
Similar to the proof of Claim \ref{kb3n}.
\end{proof}

\begin{claim} \label{kbl} Every block-balanced identity of $S$ can be derived from some block-balanced identities of $S$ with two non-linear variables.

\end{claim}

\begin{proof} We assign a Type to each pair $\{c,d \} \subseteq \ocs({\bf u})$ of occurrences of distinct non-linear variables in a word $\bf u$ as follows.
 If $\{c, d\}$ is not of the form $\{{_{last{\bf u}}x}, {_{1{\bf u}}y}\}$ for any non-linear variables $x \ne y$ then we say that $\{c,d \}$ is of Type 1.
  If $\{c, d\}= \{{_{last{\bf u}}x}, {_{1{\bf u}}y}\}$ for some variables $x \ne y$ with $2 \le occ_{\bf u} (x) \le m$, $occ_{\bf u} (y) \ge 2$ and there is no linear letter in $\bf u$ between ${_{1{\bf u}}x}$ and ${_{last{\bf u}}y}$ then we say that $\{c,d \}$ is of Type 3.
Otherwise, we say that $\{c,d \}$ is of Type 2.

Let ${\bf u} \approx {\bf v}$ be a block-balanced identity of $S$ and $\{c,d \} \subseteq \ocs({\bf u})$ be a critical pair in ${\bf u} \approx {\bf v}$.
 Suppose that $\{c,d \}$ is of Type 1.  Then by using an identity from $\{\sigma_{1}, \sigma_2\}^\delta$ we swap $c$ and $d$ in ${\bf u}$ and obtain a word ${\bf w}$.
 Evidently, the word ${\bf w}$ satisfies all the requirements of Lemma \ref{chaos2}.

Now suppose that $\{c,d \}$ is of Type 2. Then $\{c, d\}= \{{_{last{\bf u}}x}, {_{1{\bf u}}y}\}$ for some variables $x \ne y$.

{\bf Case 1:} $occ_{\bf u}(x)= n>m$ and there is no linear letter in $\bf u$ between ${_{1{\bf u}}x}$ and ${_{last{\bf u}}y}$.

In this case, by using $\{\sigma_1, \sigma_2 \}^\delta$  we obtain a word $\bf f$ so that
  all the elements of $\ocs({\bf f})$ which are in the set $\{{_{1{\bf f}}x}, {_{2{\bf f}}x}, \dots, {_{{(n-d)}{\bf f}}x}\}$ and all the elements of $\ocs({\bf f})$ which are in the set $\{{_{({n-d+1}){\bf f}}x}, \dots, {_{n{\bf f}}x}, {_{{1}{\bf f}}y}\}$ are collected together.
   After that by using an identity in  $\{x^{m+1-d}tx^dyty \approx  x^{m+1-d}tx^{d-1}yxty\}^\delta$ we swap $c$ and $d$ in $\bf f$ and obtain a word $\bf w$.
  It is easy to see that the word ${\bf w}$ satisfies all the requirements of Lemma \ref{chaos2}.

{\bf Case 2:} there is a linear letter in $\bf u$ between ${_{1{\bf u}}x}$ and ${_{last{\bf u}}y}$.

We handle this case exactly as Cases 2, 3 and 4 in the proof of Theorem \ref{fbtlem1}.

Finally, suppose that $\{c,d \}$ is of Type 3. Then $\{c, d\}= \{{_{last{\bf u}}x}, {_{1{\bf u}}y}\}$, $occ_{\bf u} (x) = n\le m$ and there is no linear letter in $\bf u$ between ${_{1{\bf u}}x}$ and ${_{last{\bf u}}y}$.

Denote $occ_{\bf u}(y)=k$. Since the word $x^my^m$ is an isoterm for $S$, we have $k>m$.
First, we collect all occurrences of $y$ together as follows. By using $\{\sigma_1, \sigma_2 \}^\delta$
we obtain a word $\bf r$ such that all the elements of $\ocs({\bf u})$ which are in the set $\{{_{last{\bf r}}x}, {_{1{\bf r}}y}, {_{2{\bf r}}y}, \dots, {_{{(k-1)}{\bf }{\bf r}}y}\}$ are collected together.
If ${_{{(k-1)}{\bf r}}y}$ and ${_{{k}{\bf r}}y}$ are not adjacent in $\bf r$ then one can find an occurrence $p$ of some non-linear variable $z \not \in \{x,y\}$ such that $p \ll_{\bf r} ({_{{k}{\bf r}}y})$. If $p$ is not the first occurrence of $z$ then by using an identity in $\{\sigma_2 \}^\delta$, we obtain a word $\bf s$ such that $({_{{k}{\bf s}}y}) \ll_{\bf s} p$.
If $p$ is the first occurrence of $z$ then first, by using $\{\sigma_1, \sigma_2 \}^\delta$ we obtain a word $\bf q$ such that all the elements of $\ocs({\bf q})$ which are in the set $\{{_{(k-d+1){\bf q}}y}, \dots, {_{{(k-1)}{\bf q}}y}, p, {_{{k}{\bf q}}y}\}$ are collected together.
After that, by using an identity in $\{y^{m+1-d}ty^dztz \approx  y^{m+1-d}ty^{d-1}zytz \}^\delta$, we obtain a word $\bf s$ such that $({_{{k}{\bf s}}y}) \ll_{\bf s} p$.
And so on. Eventually, we obtain a word $\bf t$ such that all the elements of $\ocs({\bf t})$ which are in the set $\{{_{last{\bf t}}x}, {_{1{\bf t}}y}, {_{2{\bf t}}y}, \dots, {_{{k}{\bf t}}y}\}$ are collected together.

Now by Condition (ii), $S$ satisfies the identity $x^{n-p}tx^py^k \approx  x^{n-p}tx^{p-1}yxy^{k-1}$ for some $0< p< n$.
By using $\{\sigma_1, \sigma_2 \}^\delta$  we obtain a word $\bf e$ so that
  all the elements of $\ocs({\bf e})$ which are in the set $\{{_{1{\bf e}}x}, {_{2{\bf e}}x}, \dots, {_{{(n-p)}{\bf e}}x}\}$ and all the elements of $\ocs({\bf e})$ which are in the set $\{{_{({n-p+1}){\bf e}}x}, \dots, {_{n{\bf e}}x}, {_{{1}{\bf e}}y}\}$ are collected together.
   After that by using $x^{n-p}tx^py^k \approx  x^{n-p}tx^{p-1}yxy^{k-1}$ we swap $c$ and $d$ in $\bf e$ and obtain a word $\bf w$.
  It is easy to see that  the word ${\bf w}$ satisfies all the requirements of Lemma \ref{chaos2}.
\end{proof}

The rest is similar to the proof of Theorem \ref{fbtlem1}.
\end{proof}

\begin{ex} \label{dif}
(i) The monoid $S(\{a^3b^2, a^2b^3\})$ is finitely based by Theorem \ref{fbtlem1} but fails Theorem \ref{fbtlem}.

(ii) The monoid $S(\{a^2t_1a^2b^2t_2b\})$ is finitely based  by Theorem \ref{fbtlem} but fails Theorem \ref{fbtlem1}.

(iii) The monoid $S(\{a^2t_1a^2b^2t_2b^2\})$ is non-finitely based.

\end{ex}

\begin{proof} First notice that each of these monoids satisfies $\{\sigma_1, \sigma_2 \}$ by Fact \ref{w12}.

(i) The word $a^2b^2$ is an isoterm for $S(\{a^3b^2, a^2b^3\})$, but $S(\{a^3b^2, a^2b^3\})$ satisfies none of the following
identities $\{xxtxyty \approx xxtyxty, xtxxyty \approx xtxyxty, xtxytyy \approx xtyxtyy, xtxyyty \approx xtyxyty\}$. So, Theorem \ref{fbtlem} is not applicable here.
On the other hand, $S(\{a^3b^2, a^2b^3\})$ is finitely based by Corollary \ref{product}.

(ii) Notice that $S(\{a^2t_1a^2b^2t_2b\}) \models x^3y^2 \approx y^2x^3$.
So, the word $a^3b^2$ is not an isoterm for $S(\{a^2t_1a^2b^2t_2b\})$. But each of the words $\{a^2tabtb, ata^2btb\}$ is
an isoterm for $S(\{a^2t_1a^2b^2t_2b\})$.
So, Theorem \ref{fbtlem1} is not applicable here.

On the other hand, the word $a^2b^2$ is an isoterm for $S$ and $S \models xtxytyy \approx xtyxtyy$.  Since the words $\{atabb, aabtb\}$ are isoterms for $S$, Conditions (i) and (ii) of  Theorem \ref{fbtlem} are trivially satisfied.
So, $S$ is finitely based by Theorem \ref{fbtlem}.

(iii) The monoid $S=S(\{a^2t_1a^2b^2t_2b^2\})$ is non-finitely based by Theorem 4.4(row 8) in \cite{OS3}. This is because the words $\{a^2b^2, atabtbb, atabbtb, ataabtb,  aatabtb\}$ are isoterms for $S$ and for each $n>1$ we have $S \models ytxxyp_1^2 \dots p_n^2 z xtz \approx  ytxxxyp_1^2 \dots p_n^2 z tz$.
\end{proof}

\section{Some finitely based subvarieties of $\var \{\sigma_\mu \}$}

We say that a word $\bf u$ is {\em compact} if all occurrences of all non-linear variables in $\bf u$ are collected together in each block of $\bf u$.
For example, the word $xxyt_1yyyxt_2x$ is compact because it is $xx$-compact and $yy$-compact. The word $xyyx$ is not compact.
The next lemma is needed only to prove Theorem \ref{abtab}.

\begin{lemma} \label{xx} Every 2-limited word is equivalent to a compact word modulo $\{\sigma_{\mu}, yxxty  \approx xxyty\}^\delta$.
\end{lemma}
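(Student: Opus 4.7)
My plan is to argue by induction on a non-compactness measure $M({\bf u})$, defined as the sum over blocks $B$ of ${\bf u}$ and non-linear variables $x$ having both occurrences in $B$ of the number of letters strictly between those two occurrences. Thus $M({\bf u})=0$ precisely when ${\bf u}$ is compact, and it suffices to show that whenever $M({\bf u})>0$ one can apply an identity from $\{\sigma_{\mu},\, yxxty \approx xxyty\}^{\delta}$ to produce a word ${\bf u}'$ with $M({\bf u}')<M({\bf u})$.

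For the inductive step I would select a non-compact pair $(x, B)$ with occurrences $c_1 \ll_{{\bf u}} c_2$ and gap $\beta$ of minimal length among all non-compact pairs. Minimality forces every variable having both its occurrences inside $\beta$ to have them adjacent, so each letter of $\beta$ is either a singleton in $\beta$ whose partner lies outside $\beta$ (to the left of $c_1$, to the right of $c_2$, or in another block past a linear letter) or one half of an adjacent compact pair $vv$ lying entirely inside $\beta$.

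I then try to strip a letter off the right end of $\beta$. If the rightmost letter of $\beta$ is a singleton $y$ with partner to the right of $c_2$, I apply $\sigma_{\mu}$ in the form $xt_1yxt_2y\approx xt_1xyt_2y$, substituting $t_1$ with the prefix of $\beta$ before $y$ and $t_2$ with the word between $c_2$ and the second occurrence of $y$ (which may contain linear letters of ${\bf u}$); this swaps $y$ past $c_2$, shrinks the gap of $x$ by one, and does not enlarge the gap of $y$, so $M$ strictly decreases. A dual application of $\sigma_{\mu}$ handles a singleton whose partner lies to the left of $c_1$ (moving $c_1$ rightward). If $\beta$ itself is a single compact pair $vv$, then the $t$-deletion $yxxy\approx xxyy$ of $yxxty\approx xxyty$, after renaming to $xvvx\approx vvxx$, collapses $c_1$ and $c_2$ directly.

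The principal obstacle is the case where $\beta$ ends in a compact $vv$-pair but $\beta\neq vv$, so $xvvx\approx vvxx$ does not apply. I would resolve this by shuffling singletons past the terminal $vv$-pair using the full identity $yxxty\approx xxyty$: for a singleton $z$ immediately before $vv$ with partner $z'$ to the right of $c_2$, substituting $y\mapsto z$, $x\mapsto v$, $t\mapsto c_2\gamma$ (with $\gamma$ the word between $c_2$ and $z'$) yields $zvv\,c_2\gamma\, z'\approx vvz\,c_2\gamma\, z'$, transporting $z$ inward; a subsequent $\sigma_{\mu}$-swap of $z$ past $c_2$ either strictly decreases $M$ or collapses $\beta$ toward a pure $vv$-block adjacent to $c_2$, where the previous case applies. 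The technical heart of the argument is verifying that this rearrangement terminates and that singletons whose only partner lies to the left of $c_1$ do not cause the iteration to stall; this will likely require a secondary induction on the profile of $\beta$ (e.g., the number of singletons of each partner-direction), but the two identities on hand are flexible enough to handle every sub-configuration.
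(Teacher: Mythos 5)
Your overall strategy---a termination measure plus local rewrites, in the spirit of Lemma \ref{main}---matches the paper's, and two of your moves are sound: pushing a first occurrence at the right end of the gap rightward past $c_2$ by $\sigma_{\mu}$, and pushing $c_1$ rightward past a last occurrence adjacent to it. But the cases you defer are not technicalities; they are the proof. Both $\sigma_{\mu}$ and $yxxty \approx xxyty$ preserve the relative order of \emph{last} occurrences of variables (this is the derivation-stable property $\mathcal P_{2,2}$ of Theorem \ref{stvar2}; cf.\ Corollary \ref{blbal2}). Hence a last occurrence lying in the gap of $x$ can never be moved to the right of the last occurrence of $x$, and an internal compact pair $vv$ can never be moved rightward past any last occurrence. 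So every last occurrence and every $vv$-pair in the gap is forced to exit to the \emph{left}, and moreover in left-to-right order, since extracting one leftward past another would again transpose two last occurrences. Your right-to-left stripping and your plan to ``transport singletons inward past the terminal $vv$-pair and then past $c_2$'' run directly against this constraint. Concretely, in $abxabx$ the pair for $b$ has gap $({_{1}x})({_{2}a})$: the right end is a last occurrence that cannot cross ${_{2}b}$, the left end is a first occurrence so the ``dual'' move does not apply either, and since all three gaps have length $2$ your chosen minimal pair may well be this stuck one. Likewise a gap of the form $\cdots({_{2}z})\cdots vv\cdots$ cannot be cleared from the right at all. The ``secondary induction on the profile of $\beta$'' that you postpone is exactly the missing content.

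The paper resolves this by orienting everything leftward and by a different induction parameter: it inducts on the number of $\mathcal L$-variables nested inside the gap of $x$, first compactifies those inner variables into adjacent $vv$-pairs, then repeatedly takes the \emph{leftmost} element of the gap that is not a first occurrence---which by leftmost-ness is preceded inside the gap only by first occurrences---and extracts it to the left past those first occurrences and past ${_{1{\bf u}}x}$ (each step is a legal swap of a pair $\{{_{1}w},{_{2}z}\}$ via $\sigma_{\mu}$, or an application of $yxxty \approx xxyty$ for a $vv$-pair), and finally drags ${_{2{\bf u}}x}$ leftward through the remaining first occurrences. If you want to keep a single global measure, note also that your preparatory shuffle $z\,vv \mapsto vv\,z$ need not decrease $M$ when the partner of $z$ lies in another block, so the termination of that shuffling phase would require its own argument.
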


\begin{proof} Let $\bf u$ be a 2-limited word. We say that a 2-occurring variable is an $\mathcal L$-variable in $\bf u$ if there are no linear letters between ${_{1{\bf u}}x}$ and ${_{2{\bf u}}x}$. We use $\mathfrak Q({\bf u}, x)$ to denote the set of all $\mathcal L$-variables $y \ne x$ such that both occurrences of $y$ are between ${_{1{\bf u}}x}$ and ${_{2{\bf u}}x}$. We use $Y({\bf u}, x)$ to denote the set of all occurrences of variables between ${_{1{\bf u}}x}$ and ${_{2{\bf u}}x}$. If $x$ is an $\mathcal L$-variable and $\mathfrak Q({\bf u}, x) = \{z_1, \dots, z_m\}$ for some $m \ge 0$,
 then $Y({\bf u}, x)= Y_1 \cup Y_2 \cup \{{_{1{\bf u}}}z_1, {_{2{\bf u}}z_1}, \dots, {_{1{\bf u}}}z_{m}, {_{2{\bf u}}z_{m}}\}$ where each element of $Y_1$ is the first
 occurrence of some variable in $\bf u$ and each element of $Y_2$ is the second
 occurrence of some variable in $\bf u$. The desired statement is an immediate consequence of the following.

\begin{claim} Every 2-limited word $\bf u$ is equivalent modulo $\{\sigma_{\mu}, yxxty  \approx xxyty\}^\delta$ to a word $\bf p$ with the property that for each $m \ge 0$ and for each $\mathcal L$-variable $x$ with $|\mathfrak Q({\bf u}, x)| \le m$ each of the following is true:

(i) ${_{1{\bf p}}}x \ll_{\bf p} {_{2{\bf p}}x}$;

(ii) for each $c \in \ocs ({\bf u})$ we have $c <_{\bf p} {_{1{\bf p}}x}$ if $c <_{\bf u} {_{1{\bf u}}x}$;

(iii) for each $c \in \ocs ({\bf u})$ we have ${_{2{\bf p}}x} <_{\bf p} c$ if ${_{2{\bf u}}x} <_{\bf u} c$.
\end{claim}

\begin{proof} First, we prove the statement for $m=0$.
Let $x$ be a $\mathcal L$-variable in $\bf u$ such that the set $\mathfrak Q({\bf u}, x)$ is empty. Then $Y({\bf u}, x) = Y_1 \cup Y_2$.
If $q'$ is the smallest in order $<_{\bf u}$ element in $Y_2$, then by using the identities in $\{\sigma_\mu\}^\delta$ and commuting the adjacent occurrences of variables, we move $q'$ to the left until we obtain a word $\bf s_1$ so that  $q' \ll_{\bf s_1} {_{1{\bf s_1}}x}$. And so on. After repeating this $k = |Y_2|$ times, we obtain a word ${\bf s_k}$ so that each occurrence of each variable between ${_{1{\bf s_k}}x}$ and ${_{2{\bf s_k}}x}$ is the first occurrence of this variable. Now by using the identities in $\{\sigma_\mu\}^\delta$ and commuting the adjacent occurrences of variables,
we move ${_{2{\bf s_k}}x}$ to the left until we obtain a word $\bf r_1$ so that ${_{1{\bf r_1}}}x \ll_{\bf r_1} {_{2{\bf r_1}}x}$. Since we only ``push out" the elements of $\ocs ({\bf u})$ which are between ${_{1{\bf u}}x}$ and ${_{2{\bf u}}x}$, the word $\bf r_1$
satisfies Properties (ii)-(iii) as well.

If $z \ne x$ is another $\mathcal L$-variable in $\bf u$ such that the set $\mathfrak Q({\bf u}, z)$ is empty, then by repeating the same procedure, we obtain a word $\bf r_2$ so that ${_{1{\bf r_2}}}x \ll_{\bf r_2} {_{2{\bf r_2}}x}$, ${_{1{\bf r_2}}}z \ll_{\bf r_2} {_{2{\bf r_2}}z}$ and Properties (ii)-(iii) are satisfied for $x$ and $z$. And so on. Thus, the base of induction is established.

Let $x$ be an $\mathcal L$-variable in $\bf u$ with $\mathfrak Q({\bf u}, x) = \{z_1, \dots, z_m\}$.
By our induction hypothesis, the word $\bf u$ is equivalent modulo $\{\sigma_{\mu}, yxxty  \approx xxyty\}^\delta$ to a word $\bf p$ with the property that for each $i=1, \dots, m$ we have ${_{1{\bf p}}}x <_{\bf p} {_{1{\bf p}}}z_i \ll_{\bf p} {_{2{\bf p}}z_i} <_{\bf p} {_{2{\bf p}}}x$.
 If $q'$ is the smallest in order $<_{\bf p}$ element in $Y_2 \cup \{{_{1{\bf p}}}z_1, {_{2{\bf p}}z_1}, \dots, {_{1{\bf p}}}z_{m}, {_{2{\bf p}}z_{m}}\}$, then
 we do the following. If $q' \in Y_2$ then by using the identities in $\{\sigma_\mu\}^\delta$ and commuting the adjacent occurrences of variables, we move $q'$
  to the left until we obtain a word $\bf s_1$ so that  $q' \ll_{\bf s_1} {_{1{\bf s_1}}x}$. If $q'={_{1{\bf p}}z_i}$ for some $i=1, \dots, m$, then
  by using the identities in $\{yxxty  \approx xxyty\}^\delta$, we move $({_{1{\bf p}}z_i})({_{2{\bf p}}z_i})$ to the left
 until we obtain a word $\bf s_1$ so that  $({_{1{\bf s_1}}z_i}) \ll_{\bf s_1} ({_{2{\bf s_1}}z_i}) \ll_{\bf s_1} {_{1{\bf s_1}}x}$. And so on. After repeating this $k = |Y_2|+m$ times, we obtain a word ${\bf s_{k}}$ such that each occurrence of each variable between ${_{1{\bf s_{k}}}x}$ and ${_{2{\bf s_{k}}}x}$ is the first occurrence of this variable. Now by using the identity $\sigma_\mu$ and commuting the adjacent occurrences of variables,
we move ${_{2{\bf s_{k}}}x}$ to the left until we obtain a word $\bf r_1$ such that ${_{1{\bf r_1}}}x \ll_{\bf r_1} {_{2{\bf r_1}}x}$.

If $z \ne x$ is another $\mathcal L$-variable in $\bf u$ with $\mathfrak Q({\bf u}, x) = m$, then we repeat the same procedure and obtain a word $\bf r_2$ so that ${_{1{\bf r_2}}}x \ll_{\bf r_2} {_{2{\bf r_2}}x}$, ${_{1{\bf r_2}}}z \ll_{\bf r_2} {_{2{\bf r_2}}z}$ and Properties (ii)-(iii) are satisfied for $x$ and $z$. And so on. Thus, the step of induction is established.
\end{proof}\end{proof}

\begin{fact} \label{xyz} (i) If the word $xytyx$ is an isoterm for a monoid $S$ then the words $xyztxzy$ and $yzxtzyx$ can form an identity of $S$ only with each other.

(ii) The word $xyztxzy$ is an isoterm for a monoid $S$ if and only if the word $yzxtzyx$ is an isoterm for $S$.
\end{fact}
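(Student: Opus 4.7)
The plan is a deletion-based case analysis for Part (i) that exploits the isoterm hypothesis on $xytyx$. First I would observe that the hypothesis supplies many auxiliary isoterms. By renaming variables, every word of pattern $abtba$---in particular $yztzy$, $xztzx$, $yxtxy$, and $zxtxz$---is also an isoterm for $S$. Moreover, a short substitution argument using $t \mapsto yty$ shows that $xtx$ is an isoterm as well: a non-trivial identity $xtx \approx \mathbf{v}$ would lift to a non-trivial $xytyx \approx \mathbf{v}[t \mapsto yty]$, contradicting the hypothesis.

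Assuming $S \models xyztxzy \approx \mathbf{w}$ with $\mathbf{w} \ne xyztxzy$, I would then use deletions to constrain $\mathbf{w}$. Deleting $x$ in the identity gives $yztzy \approx \mathbf{w}(y,z,t)$, which by the isoterm property forces $\mathbf{w}(y,z,t) = yztzy$. Deleting $y$ and $z$ gives $xtx \approx \mathbf{w}(x,t)$, forcing $\mathbf{w}(x,t) = xtx$. Together these constraints pin down $\mathbf{w}$ up to the placements of the two occurrences of $x$: the letter $x_1$ has three possible slots among $y_1, z_1$ before $t$, and $x_2$ has three possible slots among $z_2, y_2$ after $t$, yielding nine candidate words for $\mathbf{w}$.

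Next I would eliminate seven of the nine candidates by further deletions. Deleting only $z$ produces the identity $xytxy \approx \mathbf{w}(x,y,t)$; in five of the candidates the right-hand side becomes $xytyx$ or $yxtxy$, both isoterms, giving a contradiction. Deleting only $y$ produces $xztxz \approx \mathbf{w}(x,z,t)$, and the remaining two bad candidates yield right-hand sides $xztzx$ or $zxtxz$, also isoterms. The only surviving cases are $\mathbf{w} = xyztxzy$ (trivial) and $\mathbf{w} = yzxtzyx$, which establishes Part (i).

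For Part (ii), the same analysis applies by symmetry to the word $yzxtzyx$: the deletion arguments yield the same nine candidates for a putative $\mathbf{w}'$, and the seven bad ones are eliminated by the same isoterms. Hence any non-trivial identity $yzxtzyx \approx \mathbf{w}'$ must satisfy $\mathbf{w}' = xyztxzy$. Combined with Part (i), both words are isoterms of $S$ if and only if $S$ fails to satisfy the single identity $xyztxzy \approx yzxtzyx$, which is a symmetric condition, yielding the biconditional.

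The main obstacle will be the careful case enumeration in Part (i): tracking through all nine candidates, verifying that the two single-variable deletions (of $y$ alone and of $z$ alone) between them suffice to rule out all seven unwanted candidates, and confirming which of the $abtba$-pattern isoterms is invoked in each case.
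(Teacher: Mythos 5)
Your proposal is correct and follows essentially the same route as the paper: the paper's (one-line) proof likewise deletes down to $\{y,z,t\}$ to force ${\bf u}(y,z,t)=yztzy$ and then asserts that $yzxtzyx$ is the only remaining candidate, with part (ii) obtained from part (i) by symmetry exactly as you do. You simply supply the nine-candidate case analysis (and the auxiliary isoterms $xtx$, $yxtxy$, $xztzx$, obtained by renaming and by the substitution $t\mapsto yty$) that the paper leaves implicit.
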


\begin{proof} (i)
If $S$ satisfies an identity $xyztxzy \approx {\bf u}$ then we have ${\bf u}(y,z,t)= yztzy$. If ${\bf u} \ne xyztxzy$ then the only possibility for $\bf u$
is $yzxtzyx$.

Part (ii) immediately follows from part (i).
\end{proof}

We say that an identity ${\bf u} \approx {\bf v}$ is a {\em compact identity} if both $\bf u$ and $\bf v$ are compact words.
Part (i) of the following statement generalizes Theorem 3.2 in \cite{JS} which says that the monoid $S(\{abtab, abtba\})$ is finitely based.

\begin{theorem} \label{abtab} Let $S$ be a  monoid such that $S \models  \{t_1xt_2x t_{3}x \approx x^{3}t_1t_2 t_{3}, x^3 \approx x^4, \sigma_{\mu}, yxxty  \approx xxyty\} = \Omega$. Suppose also that $S$ satisfies one of the following conditions:

(i) both words $xytyx$ and $xytxy$ are isoterms for $S$;

(ii) the word $xyztxzy$ is an isoterm for $S$.

Then $S$ is finitely based by a subset of $\Omega \cup \{ytyxx \approx ytxxy, xxt \approx txx, xytxy \approx yxtyx, x^2 \approx x^3\}^\delta$.
\end{theorem}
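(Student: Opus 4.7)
The plan is to combine Lemma~\ref{main} (or more precisely its refinement Lemma~\ref{chaos2}) with Lemma~\ref{xx} in a three-stage reduction, showing that every identity of $S$ follows from $\Omega^\delta$ together with a finite set of additional identities drawn from $\{ytyxx \approx ytxxy, xxt \approx txx, xytxy \approx yxtyx, x^2 \approx x^3\}^\delta$. The exact subset that forms the basis will depend on which of these additional identities $S$ actually satisfies under hypothesis (i) or (ii).

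First, I would use the identities $t_1xt_2xt_3x \approx x^3t_1t_2t_3$ and $x^3 \approx x^4$ from $\Omega$, together with $\sigma_\mu^\delta$ and $yxxty \approx xxyty$ to move non-linear variables past one another, in order to reduce any identity of $S$ modulo $\Omega^\delta$ to one in which each non-linear variable has a controlled multiplicity, namely exactly two occurrences or a single $x^3$-block. The point is that any triple of occurrences of $x$ can be collected together via $t_1xt_2xt_3x \approx x^3t_1t_2t_3$ (after first transporting other variables out of the way by $\sigma_\mu^\delta$ and $yxxty \approx xxyty$) and then normalized to $x^3$ using $x^3 \approx x^4$; if $S \models x^2 \approx x^3$, such $x^3$-blocks may be further reduced to $x^2$. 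Once this normalization is performed, I would apply Lemma~\ref{xx} to each side to pass to compact words, so that the residual identity to derive is a compact 2-limited identity of $S$.

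At this point I would invoke Lemma~\ref{chaos2} with an assignment of Types to pairs of occurrences of distinct non-linear variables, where the Type records what machinery is needed to swap them: Type~1 pairs are swappable by $\sigma_\mu^\delta$ alone; Type~2 pairs additionally require $yxxty \approx xxyty$ or $xxt \approx txx$; the highest Type records pairs that can only be exchanged using one of the two-variable identities $ytyxx \approx ytxxy$ or $xytxy \approx yxtyx$. The isoterm hypotheses enter precisely here to rule out the critical pairs that cannot be swapped by any of the listed identities. Under (i), the isoterms $xytyx$ and $xytxy$ restrict which first/last occurrences of two variables can interleave across a linear block, so that a non-swappable critical pair would contradict one of these isoterms. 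Under (ii), the isoterm $xyztxzy$ (together with its companion $yzxtzyx$ supplied by Fact~\ref{xyz}) controls three-variable interactions, so that any surviving critical pair in a compact 2-limited identity involves at most two active non-linear variables and can be handled by a two-variable identity from the extra list.

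The main obstacle will be the case analysis inside the Type assignment, particularly under condition (ii): one must check on the one hand that the isoterm $xyztxzy$ forbids exactly those critical pair patterns which the four extra identities cannot resolve, and on the other hand that whenever a resolvable pattern arises the corresponding identity is in fact a law of $S$ (so that the basis really lies inside $\Omega \cup \{\ldots\}^\delta$). Once these Type-by-Type swap rules are verified, Lemma~\ref{chaos2} yields derivability of every compact 2-limited identity from the claimed subset, and Lemma~\ref{vol} ensures that the almost-linear portion of the basis is finite, completing the proof.
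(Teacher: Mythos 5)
Your plan matches the paper's proof in all essentials: the paper reduces every identity of $S$ to a 2-limited one via $\{t_1xt_2xt_3x \approx x^3t_1t_2t_3,\ x^3\approx x^4,\ x^2\approx x^3\}^\delta$, passes to compact words by Lemma~\ref{xx}, and then runs Lemma~\ref{chaos2} on 2-limited compact identities with a Type assignment in which the isoterm hypotheses (via Fact~\ref{xyz}) eliminate or resolve the problematic critical pairs exactly as you describe. The one caveat is that the substance of the paper's argument lies in the case analysis you explicitly defer --- in particular, under condition~(ii) it requires the structural claim that any variable $z$ with ${_{2{\bf u}}x} <_{\bf u} {_{2{\bf u}}z} <_{\bf u} {_{2{\bf u}}y}$ must have both of its occurrences in that interval (a consequence of the isoterm $xyztxzy$ and Fact~\ref{xyz}), together with the verification that $ytyxx\approx ytxxy$ and $xytxy \approx yxtyx$ are then genuinely laws of $S$ --- but your outline points at the correct tools for all of it.
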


\begin{proof} Let $\Delta$ denote the subset of $\{\sigma_{\mu}, yxxty  \approx xxyty, ytyxx \approx ytxxy, xytxy \approx yxtyx, xxt \approx txx \}^\delta$ satisfied by $S$. We use Lemma \ref{chaos2} to show that every 2-limited compact identity of $S$ is a consequence of $\Delta$.

We assign a Type to each pair $\{c,d \} \subseteq \ocs({\bf u})$ of occurrences of distinct variables $x \ne y$ with $occ_{\bf u}(x) \le 2$ and $occ_{\bf u}(y) \le 2$ as follows. If both $x$ and $y$ are $2$-occurring, $\{c, d\} = \{{_{1{\bf u}}x}, {_{1{\bf u}}y}\}$ or $\{c, d\} = \{{_{2{\bf u}}x}, {_{2{\bf u}}y}\}$ and there is a linear letter (possibly the same) between ${_{1{\bf u}}x}$ and ${_{2{\bf u}}x}$ and between ${_{1{\bf u}}y}$ and ${_{2{\bf u}}y}$ then
we say that $\{c,d \}$ is of Type 2. Otherwise, $\{c,d \}$ is of Type 1.

Let ${\bf u} \approx {\bf v}$ be a 2-limited compact identity of $S$ and $\{c,d \} \subseteq \ocs({\bf u})$ be a critical pair in ${\bf u} \approx {\bf v}$.
Suppose that $\{c,d \}$ is of Type 1.

First assume that, say $c$ is the only occurrence of a linear variable $t$ in $\bf u$. Then, since the word $xtx$ is an isoterm for $S$, $d$ must be an occurrence of
a 2-occurring variable $x$ and ${\bf u}( x, t) \approx {\bf v}( x, t)$ is the following identity: $xxt \approx txx$. Since ${_{1{\bf u}}x} \ll_{\bf u} {_{2{\bf u}}x}$,
we can apply $xxt \approx txx$ to ${\bf u}$ and obtain the word ${\bf w}$. Evidently, the word ${\bf w}$ satisfies all the requirements of Lemma \ref{chaos2}.

Next assume that $\{c, d\} = \{{_{1{\bf u}}x}, {_{2{\bf u}}y}\}$ for some 2-occurring variables $x$ and $y$. If there are linear letters between
${_{1{\bf u}}x}$ and  ${_{2{\bf u}}x}$ and between ${_{1{\bf u}}y}$ and  ${_{2{\bf u}}y}$
then by using an identity from $\{\sigma_{\mu}\}^\delta$ we swap $c$ and $d$ in $\bf u$ and obtain a word $\bf w$. Otherwise, we swap $c$ and $d$ in $\bf u$ by using the identity $xxt \approx txx$. In any case the resulting word ${\bf w}$ satisfies all the requirements of Lemma \ref{chaos2}.

 Now assume that $c = {_{1{\bf u}}x} \ll_{\bf u} {_{1{\bf u}}y} =d$ for some 2-occurring variables $x$ and $y$. Let $a$ denote the smallest in order $<_{\bf u}$ element of the set $\{{_{2{\bf u}}x}, {_{2{\bf u}}y}\}$. Since  $\{c,d \}$ is of Type 1, there is no linear letter between ${_{1{\bf u}}y}$ and $a$.
Since both $\bf u$ and $\bf v$ are compact words, we have that $a = {_{2{\bf u}} y}$, $({_{1{\bf u}}x}) \ll_{\bf u} ({_{1{\bf u}}y}) \ll_{\bf u} ({_{2{\bf u}} y})$ and $({_{1{\bf v}}y}) \ll_{\bf v} ({_{2{\bf v}} y})$. We use the identity $xyytx \approx yyxtx$ and obtain
the  word $\bf w$ so that $({_{1{\bf w}}y}) \ll_{\bf w} ({_{2{\bf w}}y}) \ll_{\bf w} ({_{1{\bf w}} x})$. It is easy to check that the word ${\bf w}$ satisfies all the requirements of Lemma \ref{chaos2}.

Finally, assume that $c = {_{2{\bf u}}x} \ll_{\bf u} {_{2{\bf u}}y} =d$ for some 2-occurring variables $x$ and $y$.
Let $b$ denote the largest in order $<_{\bf u}$ element of the set
$\{{_{1{\bf u}}x}, {_{1{\bf u}}y}\}$. Since $\{c,d \}$ is of Type 1, there is no linear letter between $b$ and ${_{2{\bf u}}x}$.
Since $\bf u$ is a compact word, we have that  $b = {_{1{\bf u}} x}$,  $({_{1{\bf u}}x}) \ll_{\bf u} ({_{2{\bf u}}x}) \ll_{\bf u} ({_{2{\bf u}} y})$, $({_{1{\bf v}}x}) \ll_{\bf v} ({_{2{\bf v}} x})$ and there is a linear letter between ${_{1{\bf u}}y}$ and ${_{1{\bf u}}x}$.
We apply the identity $ytxxy ={\bf u}( x, y, t) \approx {\bf v}( x, y, t)= ytyxx$ to $\bf u$ and obtain a word $\bf w$ which satisfies all the requirements of Lemma \ref{chaos2}.

 If $S$ satisfies Condition (i) which says that both words $xytyx$ and $xytxy$ are isoterms for $S$, then the identity ${\bf u} \approx {\bf v}$ does not have any
 unstable pairs of Type 2 and we are done.

 Let us suppose that $S$ satisfies Condition (ii) which says that the word $xyztxzy$ is an isoterm for $S$.
If $\{c,d \}$ is of Type 2, then $\{c,d\} = \{{_{1{\bf u}}x}, {_{1{\bf u}}y}\}$ or $\{c,d\} = \{{_{2{\bf u}}x},{_{2{\bf u}}y}\}$ for some 2-occurring variables $x\ne y$ and there is a linear letter between ${_{1{\bf u}}x}$ and ${_{2{\bf u}}x}$ and between ${_{1{\bf u}}y}$ and ${_{2{\bf u}}y}$. Since the word $xytyx$ is an isoterm for $S$, for some letter $t$ we have ${\bf u}( x, y, t) =xytxy$ and ${\bf v}( x, y, t) =yxtyx$.

In view of the symmetry, without loss of generality, we may assume that $c = {_{1{\bf u}}x}  \ll_{\bf u} {_{1{\bf u}}y} =d$. Since the word $xyt_1xt_2y$ is an isoterm for $S$, there is no linear letter in $\bf u$ between ${_{2{\bf u}}x}$ and ${_{2{\bf u}}y}$.

\begin{claim} \label{zyx} If for some variable $z$ we have ${_{2{\bf u}}x} <_{\bf u} {_{2{\bf u}}z} <_{\bf u} {_{2{\bf u}}y}$ then
we have ${_{2{\bf u}}x} <_{\bf u} {_{1{\bf u}}z} \ll_{\bf u} {_{2{\bf u}}z} <_{\bf u} {_{2{\bf u}}y}$.
\end{claim}

  \begin{proof} If there is a linear letter between ${_{1{\bf u}}z}$ and ${_{2{\bf u}}z}$ then for some letter $t$ we have
  ${\bf u}( x, y, z, t) =xyztxzy$ or ${\bf u}( x, y, z, t) =zxytxzy$. But by Fact \ref{xyz}, both these words are isoterms for $S$.
  The rest follows from the fact that $\bf u$ is a compact word.
  \end{proof}

We use $Y({\bf u}, x, y)$ to denote the set of all occurrences of variables between ${_{2{\bf u}}x}$ and ${_{2{\bf u}}y}$.
In view of Claim \ref{zyx} we have $Y({\bf u}, x, y)= Y_1 \cup \{{_{1{\bf u}}}z_1, {_{2{\bf u}}z_1}, \dots, {_{1{\bf u}}}z_{m}, {_{2{\bf u}}z_{m}}\}$ where each element of $Y_1$ is the first occurrence of some variable in $\bf u$.
If $m>0$ then it is easy to see that $S$ satisfies the identity $ytyxx \approx ytxxy$. Suppose that the set $Y({\bf u}, x, y)$ is not empty and $q$ is the smallest in order $<_{\bf u}$ element in $Y({\bf u}, x, y)$. If $q \in Y_1$, we use $\{\sigma_{\mu}\}^\delta$ and obtain a word ${\bf r_1}$ so that $q \ll_{\bf r_1} {_{2{\bf r_1}}x}$. If $q$ is the first occurrence of $z_i$ for some $i=1, \dots, m$, then we use  $ytyxx \approx ytxxy$ and obtain a word ${\bf r_1}$ so that ${_{1{\bf p}}}z_1 \ll_{\bf r_1} {_{2{\bf p}}z_1} \ll_{\bf r_1} {_{2{\bf u}}x}$. In any case we have $|Y({\bf r_1}, x, y)| < |Y({\bf u}, x, y)|$. And so on. After at most $|Y({\bf u}, x, y)|$ steps we obtain a word ${\bf r_m}$ so that the set $Y({\bf r_m}, x, y)$ is empty. This means that ${_{2{\bf u}}x} \ll_{\bf r_m} {_{2{\bf u}}y}$. Now we apply the identity $xytxy \approx yxtyx$ to ${\bf r_m}$ and obtain a word $\bf w$. It is easy to check that the word ${\bf w}$ satisfies all the requirements of Lemma \ref{chaos2}.

So, every 2-limited compact identity of $S$ can be derived from $\Delta$.  In view of Lemma \ref{xx}, every 2-limited identity of $S$ can be derived from $\{\sigma_{\mu}, yxxty  \approx xxyty\}^\delta$ and a compact identity of $S$. Finally, every identity of $S$ can be derived from a subset of $\{t_1xt_2x t_{3}x \approx x^{3}t_1t_2 t_{3}, x^3 \approx x^4, x^2 \approx x^3\}^\delta$ and a 2-limited identity of $S$. Therefore, every identity of $S$ can be derived from a subset of $\Delta \cup \{t_1xt_2x t_{3}x \approx x^{3}t_1t_2 t_{3}, x^3 \approx x^4, x^2 \approx x^3\}^\delta = \Omega \cup \{ytyxx \approx ytxxy, xxt \approx txx, xytxy \approx yxtyx, x^2 \approx x^3\}^\delta$.
\end{proof}

\begin{ex} The monoids $S(abctacb)$ and $S(cbatbca)$ are equationally equivalent and finitely based.
\end{ex}

\begin{proof} These monoids are equationally equivalent by Fact \ref{xyz} and
finitely based by Theorem \ref{abtab}(ii). \end{proof}

According to \cite{OS2}, the monoid $S(abctacb)$ is not equationally equivalent to any monoid of the form $S(W)$ where
$W$ is a set of words with two non-linear variables.

\section{Some derivation-stable properties of identities and a description of the equational theories for some varieties}

\begin{table}[tbh]
\begin{center}
\small
\begin{tabular}{|l|l|l|}
\hline defining formula for $\sim_S$  &  generating monoid $S$ & basis of identities  \\
\hline

\protect\rule{0pt}{10pt} ${\bf u} \approx {\bf v}$ is regular: &  two-element semilattice & $\{x \approx xx, xy \approx yx\}$ \\

\protect\rule{0pt}{10pt} $\con({\bf u}) = \con ({\bf v})$  &   &  \\
\hline

\protect\rule{0pt}{10pt} ${\bf u} \approx {\bf v}$ is balanced: &  infinite cyclic semigroup & $\{xy \approx yx\}$ \\

\protect\rule{0pt}{10pt}  $\forall x \in \mathfrak A$, ${\bf u} (x) = {\bf v}(x)$  &   &  \\
\hline

\protect\rule{0pt}{10pt} ${\bf u} \approx {\bf v}$ is block-balanced: &  $S(W_{AL})$, $W_{AL}$ is the set  & $\{\sigma_1, \sigma_{\mu}, \sigma_2\}^\delta$   \\

\protect\rule{0pt}{10pt}  $\forall x \in \mathfrak A$,  ${\bf u}(x, \lin({\bf u})) = {\bf v}(x, \lin({\bf v}))$  & of all almost-linear words  &   \\
\hline

\protect\rule{0pt}{10pt}  ${\bf u} \approx {\bf v}$ is $\mathcal P _{1,2}$-identity: $\lin ({\bf u}) = \lin ({\bf v})$,  &  the monoid $A_0^1$  &  $\{\sigma_1, \sigma_2,$ \\

\protect\rule{0pt}{10pt} $\non ({\bf u}) = \non ({\bf v})$, $\forall x,y \in \con ({\bf u})$, &  of order five &  $ xt_1xt_2x \approx xt_1t_2x \}^\delta$ \\

\protect\rule{0pt}{10pt}  $({_{1{\bf u}}x}) <_{\bf u} ({_{last{\bf u}}y})$ iff $({_{1{\bf v}}x}) <_{\bf v} ({_{last{\bf v}}y})$ &   &   \\
\hline

\protect\rule{0pt}{10pt}  ${\bf u} \approx {\bf v}$ is a block-balanced    &  $S(W_{AL} \cup \{a^kb^k | k>0\})$ & $\{\sigma_1, \sigma_2\}^\delta$  \\

\protect\rule{0pt}{10pt}  $\mathcal P _{1,2}$-identity &    & \\
\hline

\protect\rule{0pt}{10pt}  ${\bf u} \approx {\bf v}$ is a $\mathcal P _{n}$-identity: &  $S(W_n)$, $W_n$ is the set   & $\{t_1xt_2xt_3x \dots t_{n+1}x \approx $ \\

\protect\rule{0pt}{10pt}  $\con({\bf u}) = \con({\bf v})$ & of all $n$-limited words  &  $ \approx x^{n+1}t_1t_2 \dots t_{n+1},$ \\

\protect\rule{0pt}{10pt}  ${\bf u} (\con_n ({\bf u})) = {\bf v} (\con_n ({\bf v}))$ &  & $x^{n+1} \approx x^{n+2} \}^\delta$ \\
\hline

\protect\rule{0pt}{10pt}  ${\bf u} \approx {\bf v}$ is a $\mathcal P _1$-identity: & $S(\{ab\})$ & $\{x^2t \approx tx^2 \approx xtx,$ \\

\protect\rule{0pt}{10pt}  $\con({\bf u}) = \con({\bf v})$ &  &  $ x^2 \approx x^3 \}$ \\

\protect\rule{0pt}{10pt}  ${\bf u}(\lin({\bf u})) = {\bf v}(\lin({\bf v}))$ &  &   \\

\hline

\end{tabular}
\caption{Three ways to define a variety
\protect\rule{0pt}
{11pt}}

\label{classes}
\end{center}
\end{table}

We say that a property  of identities $\mathcal P$ is {\em derivation-stable} if an identity $\tau$ satisfies property ${\mathcal P}$ whenever
$\Sigma \vdash \tau$ and each identity in $\Sigma$ satisfies property ${\mathcal P}$. It is easy to check that such properties of identities as being a balanced identity, being a regular identity, being a $\mathcal P _n$-identity ($n>0$), being a block-balanced identity are all derivation stable.
Evidently, a property $\mathcal P$ of an identity is derivation-stable if and only if $\mathcal P$ defines a fully invariant congruence ($\sim_S$) on the free semigroup corresponding to some semigroup $S$.
Each row of Table \ref{classes} corresponds to a variety of monoids. Each variety in Table \ref{classes} is defined in three different
ways: by the property of its identities, by its generating monoid and by its basis of identities. The first two rows of Table \ref{classes} correspond to the well-known varieties. The fact that the three descriptions in Rows 3-5 define the same variety is justified in Propositions \ref{blbalanced}, \ref{Straub} and \ref{1ell} respectively. The information in Rows 6-7 can be easily deduced from Theorem 3.1 in \cite{JS}.

\begin{prop} \label{blbalanced} For a monoid $S$ the following are equivalent:

(i) $Eq(S)$ is the set of all block-balanced identities;

(ii) $S$ is finitely based by $\{\sigma_1, \sigma_{\mu}, \sigma_2\}^\delta$;

(iii) $S$ is equationally equivalent to $S(W_{AL})$ where $W_{AL}$ is the set of all almost-linear words.

\end{prop}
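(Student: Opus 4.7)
The plan is to establish that each of (i), (ii), (iii) is equivalent to the statement that $Eq(S)$ equals the set of all block-balanced identities.

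The equivalence of (i) with (ii) follows from two observations. First, each of $\sigma_1, \sigma_\mu, \sigma_2$ is itself a block-balanced identity, so by the derivation-stability of block-balancedness (noted at the start of Section 6, and easy to verify), the deductive closure of $\{\sigma_1, \sigma_\mu, \sigma_2\}^\delta$ consists only of block-balanced identities. Second, Claim \ref{bb} provides the reverse inclusion: every block-balanced identity is derivable from $\{\sigma_1, \sigma_\mu, \sigma_2\}^\delta$. Together these two observations yield that the deductive closure of $\{\sigma_1, \sigma_\mu, \sigma_2\}^\delta$ equals the set of all block-balanced identities, which gives $\mathrm{(i)} \Leftrightarrow \mathrm{(ii)}$.

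For $\mathrm{(i)} \Leftrightarrow \mathrm{(iii)}$, I would characterize $Eq(S(W_{AL}))$ as the set of block-balanced identities. For the forward inclusion, I would exploit the fact that every almost-linear word is an isoterm for $S(W_{AL})$, since it itself lies in $W_{AL}$. Assuming $S(W_{AL}) \models {\bf u} \approx {\bf v}$, the monoid-substitution sending every variable outside $\{x\} \cup \lin({\bf u})$ to the identity element of $S(W_{AL})$ produces the identity ${\bf u}(x, \lin({\bf u})) \approx {\bf v}(x, \lin({\bf u}))$ in $S(W_{AL})$. Its left-hand side is almost-linear, hence an isoterm, forcing the two sides to coincide as words. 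Applied to each variable $x$, this is precisely the definition of block-balancedness.

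The reverse inclusion reduces, via Claim \ref{bb}, to verifying that $S(W_{AL}) \models \{\sigma_1, \sigma_\mu, \sigma_2\}$. The identities $\sigma_1$ and $\sigma_2$ are immediate from Fact \ref{w12}, because every word in $W_{AL}$ has at most one non-linear variable and hence admits no adjacent pair of occurrences of two distinct non-linear variables. The main obstacle is the direct verification of $\sigma_\mu$. I would handle it by case analysis on a substitution $\theta$: both sides of $\sigma_\mu = xt_1xyt_2y \approx xt_1yxt_2y$ contain the same multiset of letters (they differ only in the order of $\theta(x)$ and $\theta(y)$ in the central factor), so one evaluation is almost-linear if and only if the other is; when neither is almost-linear both vanish in $S(W_{AL})$, while if either is almost-linear then every letter of $\theta(x)$ and of $\theta(y)$ must equal a single common letter, so $\theta(x)$ and $\theta(y)$ are powers of that letter and commute, making the two evaluations agree as words.
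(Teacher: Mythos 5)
Your proposal is correct and takes essentially the same route as the paper: (i)$\leftrightarrow$(ii) via the derivation-stability of block-balancedness together with Claim \ref{bb}, and (i)$\leftrightarrow$(iii) by deleting down to ${\bf u}(x,\lin({\bf u})) \approx {\bf v}(x,\lin({\bf u}))$ and invoking that almost-linear words are isoterms for $S(W_{AL})$. The only difference is that you explicitly verify $S(W_{AL}) \models \{\sigma_1, \sigma_{\mu}, \sigma_2\}$ (via Fact \ref{w12} and a correct case analysis for $\sigma_{\mu}$), a step the paper simply asserts.
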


\begin{proof} (i) $\leftrightarrow$ (ii)
Notice that the identities $\sigma_1$, $\sigma_\mu$ and $\sigma_2$ are block-balanced.
If an identity ${\bf u} \approx {\bf v}$ can be derived from $\{\sigma_1, \sigma_\mu, \sigma_2 \}^\delta$, then in view of the fact that the property of being a block-balanced identity is derivation-stable, the identity ${\bf u} \approx {\bf v}$ is also block-balanced.
The rest follows from Claim \ref{bb} in the proof of Theorem \ref{LL1}.

(i) $\leftrightarrow$ (iii) First notice that $S(W_{AL}) \models \{\sigma_1, \sigma_\mu, \sigma_2 \}$.

Let ${\bf u} \approx {\bf v}$ be an identity of $S(W_{AL})$. If ${\bf u} \approx {\bf v}$ is not block-balanced, then for some $x \in \mathfrak A$, we have ${\bf u}(x, \lin({\bf u})) \not = {\bf v}(x, \lin({\bf u}))$. Since $S(W_{AL})$ is a monoid, we have $S(W_{AL}) \models {\bf u}(x, \lin({\bf u})) \approx {\bf v}(x, \lin({\bf u}))$. But this is impossible because the word ${\bf u}(x, \lin({\bf u}))$ is an isoterm for $S(W_{AL})$. Therefore, the monoid $S(W_{AL})$ satisfies only block-balanced identities. \end{proof}

\begin{prop} \label{1ell} For a monoid $S$ the following are equivalent:

(i) $Eq(S)$ is the set of all block-balanced $\mathcal P _{1,2}$-identities;

(ii) $S$ is finitely based by $\{\sigma_1, \sigma_2\}^\delta$;

(iii) $S$ is equationally equivalent to $S(W_{AL} \cup \{a^kb^k | k>0\})$.

\end{prop}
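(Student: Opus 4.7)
First, I would check by inspection that $\sigma_1$ and $\sigma_2$ are themselves block-balanced $\mathcal{P}_{1,2}$-identities. The property of being block-balanced is derivation-stable (noted at the start of Section 6), and the property of being a $\mathcal{P}_{1,2}$-identity coincides with being a $J_2$-identity by Proposition \ref{Straub}(i)$\leftrightarrow$(ii), which is an equational theory and hence derivation-stable. So every identity derivable from $\{\sigma_1,\sigma_2\}^\delta$ is block-balanced and $\mathcal{P}_{1,2}$, while the reverse inclusion is precisely Lemma \ref{s1s21}; together these yield (i)$\leftrightarrow$(ii).

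For (iii)$\rightarrow$(ii), I would apply Fact \ref{w12} to $W := W_{AL} \cup \{a^k b^k \mid k > 0\}$. Almost-linear words have no adjacent pair of distinct non-linear variables (vacuous), and each $a^k b^k$ has a unique such pair, joining the last $a$ with the first $b$, which has the required form (with $x = b$, $y = a$). Hence $S(W) \models \{\sigma_1, \sigma_2\}$, and by (i)$\leftrightarrow$(ii) just established, $S(W)$ satisfies every block-balanced $\mathcal{P}_{1,2}$-identity.

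The main obstacle is (ii)$\rightarrow$(iii), namely that $S(W)$ satisfies no other identity. Given $\mathbf{u} \approx \mathbf{v} \in Eq(S(W))$, I would first establish block-balance via an isoterm argument: substituting some variable to the letter $a$ and all others to $1$ forces regularity and $\lin(\mathbf{u}) = \lin(\mathbf{v})$, since $a \ne a^k$ in $S(W)$ for $k > 1$; then deleting the variables outside $\{x\} \cup \lin(\mathbf{u})$ yields $S(W) \models \mathbf{u}(x, \lin(\mathbf{u})) \approx \mathbf{v}(x, \lin(\mathbf{u}))$, and since $\mathbf{u}(x, \lin(\mathbf{u}))$ is almost-linear (hence in $W_{AL}$), it is an isoterm, forcing the two words to coincide. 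The $\mathcal{P}_{1,2}$-property is the delicate step: suppose for contradiction that some $x \ne y$ has $({_{1\mathbf{u}}}x) <_{\mathbf{u}} ({_{last\,\mathbf{u}}}y)$ but $({_{last\,\mathbf{v}}}y) <_{\mathbf{v}} ({_{1\mathbf{v}}}x)$, so that $\mathbf{v}(x, y) = y^p x^q$. A case analysis on the shape of $\mathbf{u}(x, y)$ would conclude the proof: if $\mathbf{u}(x, y) = x^r y^s$, or one of $x, y$ is linear in $\mathbf{u}$, then the substitution $x \to a$, $y \to b$ produces two distinct almost-linear images, each its own isoterm in $S(W)$; otherwise $\mathbf{u}(x, y)$ contains both $xy$ and $yx$ as scattered subwords with $x, y$ both non-linear, and the swapped substitution $x \to b$, $y \to a$ makes the image of $\mathbf{u}$ vanish in $S(W)$ (it has both $ab$ and $ba$ as scattered subwords with both letters non-linear, so it is neither almost-linear nor of the form $a^I b^J$), while the image of $\mathbf{v}$ becomes the nonzero word $a^p b^q$. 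Either case contradicts $S(W) \models \mathbf{u} \approx \mathbf{v}$, finishing the proof.
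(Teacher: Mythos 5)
Your proof of (i)$\leftrightarrow$(ii) is exactly the paper's: $\sigma_1,\sigma_2$ are block-balanced $\mathcal P_{1,2}$-identities, both properties are derivation-stable, and Lemma \ref{s1s21} gives the converse. Where you genuinely diverge is the treatment of (iii). The paper dispatches (ii)$\leftrightarrow$(iii) in one line by citing Corollary \ref{1lwords4}: since $S(W_{AL}\cup\{a^kb^k \mid k>0\})$ satisfies $\{\sigma_1,\sigma_2\}$ and has every $x^ky^k$ as an isoterm, that corollary makes it finitely based by almost-linear identities together with $\{\sigma_1,\sigma_2\}^\delta$, and the almost-linear identities are all trivial because $W_{AL}$ consists of isoterms; so the route goes through the Section 4 machinery (Theorem \ref{fbS3} and Lemma \ref{s1s22}). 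You instead compute $Eq(S(W_{AL}\cup\{a^kb^k\mid k>0\}))$ from scratch: Fact \ref{w12} gives the inclusion of the deductive closure of $\{\sigma_1,\sigma_2\}^\delta$ into the equational theory, and direct isoterm and substitution arguments give the reverse inclusion --- the block-balance half mirrors the paper's own proof of Proposition \ref{blbalanced}((i)$\leftrightarrow$(iii)), and the $\mathcal P_{1,2}$ half is a bespoke case analysis on $\mathbf{u}(x,y)$ in the spirit of the scattered-subword argument in Proposition \ref{Straub}. Your version is longer but more elementary and self-contained; the paper's is shorter on the page but only because the heavy lifting was already done elsewhere. Two small imprecisions, neither fatal: first, your direction labels are off --- what you call ``(iii)$\rightarrow$(ii)'' and ``(ii)$\rightarrow$(iii)'' are really the two inclusions of the single equality $Eq(S(W_{AL}\cup\{a^kb^k\mid k>0\}))=\{$block-balanced $\mathcal P_{1,2}$-identities$\}$, from which all the equivalences involving (iii) follow. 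Second, in your first case the images under $x\mapsto a$, $y\mapsto b$ need not be almost-linear (e.g.\ $\mathbf{u}(x,y)=x^ry^s$ with $r,s\ge 2$ yields $a^rb^s$, which has two non-linear letters but is still a nonzero element as a subword of $a^{\max(r,s)}b^{\max(r,s)}$); the contradiction survives because the two image words are distinct and at least one represents a nonzero element of the Rees quotient, which is all that is needed, but ``each its own isoterm'' is not the right justification.
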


\begin{proof} (i) $\leftrightarrow$ (ii) First notice that $\sigma_1$ and $\sigma_2$ are block-balanced $\mathcal P _{1,2}$-identities.
Since both properties are derivation-stable, any consequence of $\{\sigma_1, \sigma_2\}^\delta$ is again a block-balanced $\mathcal P _{1,2}$-identity.
The rest follows from Lemma \ref{s1s21}.

(ii) $\leftrightarrow$ (iii) follows from Corollary \ref{1lwords4}.
\end{proof}

Here are four more properties of identities similar to Property $\mathcal P_{1,2}$.

\begin{definition} \label{stable}
 We say that an identity ${\bf u} \approx {\bf v}$ with $\lin ({\bf u}) = \lin ({\bf v})$ and $\non ({\bf u}) = \non ({\bf v})$ satisfies

(i) Property $\mathcal P_{1,1}$ if for each $x \ne y \in \con({\bf u})$ we have $({_{1{\bf u}}x}) <_{\bf u} ({_{1{\bf u}}y})$ iff $({_{1{\bf v}}x}) <_{\bf v} ({_{1{\bf v}}y})$
(the order of first occurrences of variables is the same in $\bf u$ and in ${\bf v}$);

(ii) Property $\mathcal P_{2,2}$ if for each $x \ne y \in \con({\bf u})$ we have $({_{last{\bf u}}x}) <_{\bf u} ({_{last{\bf u}}y})$ iff $({_{last{\bf v}}x}) <_{\bf v} ({_{last{\bf v}}y})$
(the order of last occurrences of variables is the same in $\bf u$ and in ${\bf v}$);

(iii) Property $\mathcal P_{1,2}$ if for each $x \ne y \in \con({\bf u})$ we have $({_{1{\bf u}}x}) <_{\bf u} ({_{last{\bf u}}y})$ iff $({_{1{\bf v}}x}) <_{\bf v} ({_{last{\bf v}}y})$.

We say that a balanced identity ${\bf u} \approx {\bf v}$ satisfies

(iv) Property $\mathcal P_{1,\mu}$ if for each $x \ne y \in \con({\bf u})$ and each $1 \le i \le occ_{\bf u}(y)$ we have $({_{1{\bf u}}x}) <_{\bf u} ({_{i{\bf u}}y})$ iff $({_{1{\bf v}}x}) <_{\bf v} ({_{i{\bf v}}y})$;

(v) Property $\mathcal P_{\mu, 2}$ if for each $x \ne y \in \con({\bf u})$ and each $1 \le i \le occ_{\bf u}(x)$ we have $({_{i{\bf u}}x}) <_{\bf u} ({_{last{\bf u}}y})$ iff $({_{i{\bf v}}x}) <_{\bf v} ({_{last{\bf v}}y})$.

\end{definition}

The following machinery is needed only to prove Theorem \ref{stvar2}.

We say that a set of identities $\Sigma$ is {\em full} if each identity $({\bf u} \approx {\bf v}) \in \Sigma$ satisfies the following condition:

(*) If the words ${\bf u}$ and ${\bf v}$ do not begin (end) with the same linear letter, then
$\Sigma$ contains the identity $t{\bf u} \approx t{\bf v}$ (${\bf u}t \approx {\bf v}t$) for some $t \not \in \con({\bf uv})$.

For example, if $\Sigma$ is a full set of identities containing $\sigma_\mu$:
$xt_1xyt_2y \approx xt_1yxt_2y$, then $\Sigma$ must also contain the identities $txt_1xyt_2y \approx txt_1yxt_2y$, $xt_1xyt_2yt \approx xt_1yxt_2yt$ and $txt_1xyt_2yt_3 \approx txt_1yxt_2yt_3$.

 A {\em substitution} $\Theta: \mathfrak A \rightarrow \mathfrak A^+$ is a homomorphism of the free semigroup $\mathfrak A^+$.
Let $\Sigma$ be a full set of identities. A {\em derivation} of an
identity ${\bf U} \approx {\bf V}$ from $\Sigma$ is a sequence of words ${\bf U}={\bf U}_1 \approx {\bf U}_2 \approx \dots \approx {\bf U}_l={\bf V}$ and substitutions $\Theta_1, \dots, \Theta_{l-1} (\mathfrak A \rightarrow \mathfrak A ^+$) so that for each  $i=1, \dots, l-1$ we have ${\bf U}_i=\Theta_i({\bf u}_i)$ and  ${\bf U}_{i+1}=\Theta_i({\bf v}_i)$ for some identity ${\bf u}_i \approx {\bf v}_i \in \Sigma$. It is easy to see that each finite set of identities $\Sigma$ is a
subset of a finite full set of identities $\Sigma'$ so that
$\var \Sigma= \var \Sigma'$ and that an identity $\tau$ can be derived
from $\Sigma$ in the usual sense if and only if $\tau$ can be
derived from $\Sigma'$ in the sense defined in the previous
sentence.

We say that a property $\mathcal P$ of identities is {\em substitution-stable} provided that for every substitution $\Theta: \mathfrak A \rightarrow \mathfrak A^+$,
the identity $\Theta({\bf u}) \approx  \Theta({\bf v})$ satisfies property $\mathcal P$ whenever ${\bf u} \approx {\bf v}$ satisfies $\mathcal P$.
 Evidently, a property of identities is derivation-stable if and only if it is transitive and substitution-stable.

Let $\Theta: \mathfrak A \rightarrow \mathfrak A^+$ be a substitution so that $\Theta({\bf u})={\bf U}$. Then
$\Theta$ induces a map $\Theta_{\bf u}$ from $\ocs({\bf u})$
into  subsets of $\ocs({\bf U})$ as follows.
If $1 \le i \le occ_{\bf u}(x)$ then $\Theta_{\bf u}({_{i{\bf u}}x})$ denotes the set of all elements of $\ocs({\bf U})$
contained in the subword of ${\bf U}$ of the form $\Theta(x)$ that corresponds to the $i^{th}$ occurrence of variable $x$ in ${\bf u}$. For example, if $\Theta(x)=ab$ and $\Theta(y)=bab$ then $\Theta_{xyx}({_{2(xyx)}x})=\{{_{3(abbabab)}a}, {_{4(abbabab)}b} \}$.
Evidently, for each $x \in \ocs({\bf u})$ the set $\Theta_{\bf u} (x)$ is an interval in $(\ocs({\bf U}), <_{\bf U})$.
Now we define a function $\Theta^{-1}_{\bf u}$ from $\ocs({\bf U})$ to $\ocs({\bf u})$ as follows.
If $c \in \ocs({\bf U})$ then $\Theta^{-1}_{\bf u}(c)=d$ so that $\Theta_{\bf u} (d)$ contains $c$.
For example, $\Theta^{-1}_{xyx}({_{3(abbabab)}a})= {_{2(xyx)}x}$.
It is easy to see that if ${\bf U}=\Theta({\bf u})$ then the function $\Theta^{-1}_{\bf u}$ is a homomorphism from $(\ocs({\bf U}), <_{\bf U})$ to $(\ocs({\bf u}), <_{\bf u})$, i.e. for every $c,d \in \ocs({\bf U})$ we have $\Theta^{-1} _{\bf u}(c) \le _{\bf u} \Theta^{-1}_{\bf u}(d)$  whenever $c <_{\bf U} d$.

 If $X \subseteq \ocs({\bf u})$ and $f_{{\bf u}, {\bf v}}$ is an injection from a subset of $\ocs({\bf u})$ into the set $\ocs({\bf v})$
 then we say that the set $X$ is {\em $f_{{\bf u}, {\bf v}}$-stable} in an identity ${\bf u} \approx {\bf v}$
if the map $f_{{\bf u}, {\bf v}}$ is defined on $X$ and is an isomorphism of the (totally) ordered sets $(X, <_{\bf u})$ and $(f_{{\bf u}, {\bf v}}(X), <_{\bf v})$.
Otherwise, we say that the set $X$ is {\em $f_{{\bf u}, {\bf v}}$-unstable} in ${\bf u} \approx {\bf v}$.
Let $e_{{\bf u}, {\bf v}}$ be a map from $\{ _{1{\bf u}}x, \ _{last{\bf u}}x \mid x \in \non({\bf u}) \cap \non({\bf v}) \}$ to $\{ _{1{\bf v}}x, \ _{last{\bf v}}x \mid x \in \non({\bf u}) \cap \non({\bf v}) \}$ defined by $e_{{\bf u}, {\bf v}} (_{1{\bf u}}x) = {_{1{\bf v}}x}$ and $e_{{\bf u}, {\bf v}} (_{last{\bf u}}x) = {_{last{\bf v}}x}$. The following lemma is needed only to prove Theorem \ref{stvar2}.

\begin{lemma} \label{1occ} Let ${\bf u} \approx {\bf v}$ be a $\mathcal P_{1,1}$-identity and $\Theta: \mathfrak A \rightarrow \mathfrak A^+$ be a substitution.
 If ${\bf U} = \Theta({\bf u})$ and ${\bf V}= \Theta({\bf v})$ then for each $x \in \con({\bf U})$ we have $\Theta_{\bf u}^{-1}({_{1{\bf U}}x}) = {_{1{\bf u}}z}$ and $\Theta_{\bf v}^{-1}({_{1{\bf V}}x})= {_{1{\bf v}}z}$ for some $z \in \con({\bf u})$.

\end{lemma}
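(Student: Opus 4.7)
The plan is to locate, for each $x \in \con({\bf U})$, a canonical variable of $\bf u$ (equivalently of $\bf v$) whose first occurrence generates the first occurrence of $x$ both in $\bf U$ and in $\bf V$, and to use Property $\mathcal P_{1,1}$ to ensure it is the \emph{same} variable on the two sides.

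First I would set $Y_x := \{ y \in \con({\bf u}) \mid x \in \con(\Theta(y))\}$. Since $\lin({\bf u}) = \lin({\bf v})$ and $\non({\bf u}) = \non({\bf v})$, we have $\con({\bf u}) = \con({\bf v})$, so $Y_x$ is the same set when viewed inside $\con({\bf v})$. The set $Y_x$ is non-empty because $x$ does appear in $\bf U = \Theta({\bf u})$. I would let $z \in Y_x$ be the variable whose first occurrence in $\bf u$ is leftmost in the order $<_{\bf u}$; by Property $\mathcal P_{1,1}$, the order of first occurrences is identical in $\bf u$ and $\bf v$, so $z$ is also the variable in $Y_x$ whose first occurrence in $\bf v$ is leftmost in $<_{\bf v}$.

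Next I would argue that $\Theta_{\bf u}^{-1}({_{1{\bf U}}x}) = {_{1{\bf u}}z}$. Indeed, every occurrence of $x$ in $\bf U$ lies inside some interval $\Theta_{\bf u}(p)$ with $p \in \ocs({\bf u})$; for such an interval to contain an occurrence of $x$, the underlying variable of $p$ must belong to $Y_x$. Because $\Theta^{-1}_{\bf u}$ is a homomorphism of the ordered sets, the interval $\Theta_{\bf u}(p)$ containing the \emph{first} occurrence of $x$ in $\bf U$ must correspond to the leftmost element of $\ocs({\bf u})$ whose variable is in $Y_x$. By minimality of $z$, that leftmost element is exactly ${_{1{\bf u}}z}$, and any earlier occurrence of any variable in $Y_x$ in $\bf u$ would contradict the choice of $z$. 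The same argument applied to $\bf v$, using the same variable $z$ (justified by the previous paragraph), yields $\Theta_{\bf v}^{-1}({_{1{\bf V}}x}) = {_{1{\bf v}}z}$.

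The only genuinely non-trivial point is that the same $z$ works for both $\bf u$ and $\bf v$; this is where Property $\mathcal P_{1,1}$ is used in an essential way, and everything else is a bookkeeping exercise about intervals in the substituted word. I do not foresee any additional obstacle — the statement is essentially a rigorous formalisation of the intuition that "the first occurrence of a letter in a substituted word lies in the image of the first occurrence of some source variable," combined with the fact that a $\mathcal P_{1,1}$-identity preserves that source variable.
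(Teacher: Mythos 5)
Your proof is correct and uses the same essential idea as the paper: the first occurrence of $x$ in $\Theta({\bf u})$ must lie in the image of the first occurrence of the $<_{\bf u}$-minimal variable whose $\Theta$-image contains $x$, and Property $\mathcal P_{1,1}$ forces that minimal variable to be the same for ${\bf u}$ and ${\bf v}$. The paper merely packages this as a proof by contradiction (assuming the two sides yield distinct variables $z \ne y$ and contradicting the stability of $\{{_{1{\bf u}}z}, {_{1{\bf u}}y}\}$), whereas you argue directly via the minimal element of your set $Y_x$; the content is identical.
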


\begin{proof} Evidently, $\Theta_{\bf u}^{-1}({_{1{\bf U}}x}) = {_{1{\bf u}}z}$ and
$\Theta_{\bf v}^{-1}({_{1{\bf V}}x})= {_{1{\bf v}}y}$ for some $z, y \in \con({\bf u})$.
If $z \ne y$ then both $\Theta(z)$ and $\Theta(y)$ contain $x$. Therefore,
${_{1{\bf u}}z} <_{\bf u} {_{1{\bf u}}y}$ and ${_{1{\bf v}}y} <_{\bf v} {_{1{\bf v}}z}$.
To avoid a contradiction to the fact that the set $\{{_{1{\bf u}}z}, {_{1{\bf u}}y}\} \subseteq \ocs({\bf u})$ is $e_{{\bf u}, {\bf v}}$-stable in ${\bf u} \approx {\bf v}$,
we must assume that $y=z$.
\end{proof}

\begin{theorem} \label{stvar2} All properties of identities in Definition \ref{stable} are derivation-stable.
\end{theorem}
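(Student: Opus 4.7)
The characterization preceding Definition \ref{stable} reduces derivation-stability to transitivity plus substitution-stability. Transitivity is immediate for all five properties: each is a conjunction of biconditionals comparing orderings of distinguished occurrences in $\bf u$ and $\bf v$, and such biconditionals chain through an intermediate word; the ambient hypotheses ($\lin$/$\non$ agreement or balancedness) are manifestly transitive. Henceforth fix a $\mathcal P$-identity ${\bf u} \approx {\bf v}$ and a substitution $\Theta$, and let ${\bf U} = \Theta({\bf u})$ and ${\bf V} = \Theta({\bf v})$.

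The ambient conditions transfer to ${\bf U} \approx {\bf V}$: clearly $\con({\bf U}) = \con({\bf V})$; for $a \in \lin({\bf U})$, the identity $occ_{\bf U}(a) = \sum_{x \in \con({\bf u})} occ_{\bf u}(x) \cdot occ_{\Theta(x)}(a) = 1$ forces a unique $x \in \lin({\bf u}) = \lin({\bf v})$ with $occ_{\Theta(x)}(a) = 1$ and no other $y$ with $a \in \con(\Theta(y))$, whence $occ_{\bf V}(a) = 1$, giving $\lin({\bf U}) = \lin({\bf V})$ and $\non({\bf U}) = \non({\bf V})$; balancedness is trivially preserved by the homomorphism $\Theta$. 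For $\mathcal P_{1,1}$ (and dually $\mathcal P_{2,2}$) I would apply Lemma \ref{1occ} and its evident dual: given distinct $a, b \in \con({\bf U})$ with preimage variables $z, w$ of ${_{1{\bf U}}a}, {_{1{\bf U}}b}$ as in the lemma (so ${_{1{\bf V}}a}, {_{1{\bf V}}b}$ trace back to the same ${_{1{\bf v}}z}, {_{1{\bf v}}w}$), if $z \ne w$ the two $\Theta$-blocks are disjoint and the comparison in ${\bf U}$ reduces to the order of ${_{1{\bf u}}z}$ vs ${_{1{\bf u}}w}$ in ${\bf u}$, which $\mathcal P_{1,1}$ transfers to ${\bf v}$ and hence to ${\bf V}$; if $z = w$, both distinguished occurrences lie in the first copy of $\Theta(z)$, identical in ${\bf U}$ and ${\bf V}$, so the comparison is settled intrinsically by $\Theta(z)$.

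The main obstacle is $\mathcal P_{1,2}$, since the identity $xyyx \approx yxxy$ shows that $\mathcal P_{1,2}$ does not imply $\mathcal P_{1,1}$ and Lemma \ref{1occ} is not directly available. The crucial reformulation is that $\mathcal P_{1,2}$ holds iff the set of scattered subwords of length $\le 2$ over $\con({\bf u})$ agrees between ${\bf u}$ and ${\bf v}$: for $x \ne y$, the scattered subword $xy$ appears in ${\bf u}$ iff ${_{1{\bf u}}x} <_{\bf u} {_{last{\bf u}}y}$, while $xx$-presence is controlled by $x \in \non({\bf u})$. In this form substitution-stability is immediate: if $ab$ is a length-$2$ scattered subword of ${\bf U}$, either both letters lie in a common $\Theta(x)$-block (so $ab$ is scattered inside $\Theta(x)$, hence inside the corresponding copy of $\Theta(x)$ in ${\bf V}$ since $x \in \con({\bf v})$) or they split across distinct $\Theta(x)$- and $\Theta(y)$-blocks, in which case $xy$ is scattered in ${\bf u}$, hence by $\mathcal P_{1,2}$ in ${\bf v}$, yielding $ab$ scattered in ${\bf V}$.

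For $\mathcal P_{1,\mu}$ (and dually $\mathcal P_{\mu,2}$) I would use the counting reformulation: $\mathcal P_{1,\mu}$ holds iff for each $x \ne y \in \con({\bf u})$ the number of $y$-occurrences preceding ${_{1{\bf u}}x}$ equals the number preceding ${_{1{\bf v}}x}$, since $\{i : {_{i{\bf u}}y} <_{\bf u} {_{1{\bf u}}x}\}$ is always a prefix $\{1, \dots, k\}$. Because $\mathcal P_{1,\mu}$ implies $\mathcal P_{1,1}$ (take $i=1$; balancedness supplies the ambient conditions), Lemma \ref{1occ} applies and produces a common preimage variable $z$ with ${_{1{\bf U}}a}, {_{1{\bf V}}a}$ lying in the first copy of $\Theta(z)$ in ${\bf U}$ and ${\bf V}$ respectively. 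The number of $b$'s preceding ${_{1{\bf U}}a}$ in ${\bf U}$ then decomposes as $\sum_{w \in \con({\bf u})} occ_{\Theta(w)}(b) \cdot \#\{l : {_{l{\bf u}}w} <_{\bf u} {_{1{\bf u}}z}\}$ plus a fixed intra-block contribution from $\Theta(z)$ that depends only on $\Theta(z)$; each summand matches its analogue for ${\bf v}$ by the counting form of $\mathcal P_{1,\mu}$, so the totals agree in ${\bf U}$ and ${\bf V}$.
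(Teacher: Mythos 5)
Your proposal is correct and follows essentially the same route as the paper: transitivity plus substitution-stability, with Lemma \ref{1occ} (and its dual) handling $\mathcal P_{1,1}$ and $\mathcal P_{2,2}$, and the occurrence-counting argument handling $\mathcal P_{1,\mu}$ and $\mathcal P_{\mu,2}$. The only cosmetic difference is that for $\mathcal P_{1,2}$ the paper simply invokes Proposition \ref{Straub} (the class of $\mathcal P_{1,2}$-identities equals $J_2$, an equational theory, hence derivation-stable), whereas you verify substitution-stability directly from the same scattered-subword characterization.
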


 \begin{proof} Property $\mathcal P_{1,2}$ is derivation-stable by Proposition \ref{Straub}.

 (i) Let ${\bf u} \approx {\bf v}$ be a $\mathcal P_{1,1}$-identity and $\Theta: \mathfrak A \rightarrow \mathfrak A^+$ be a substitution.
 Denote ${\bf U} = \Theta({\bf u})$ and ${\bf V}= \Theta({\bf v})$.
Suppose that for some $x, y \in \con({\bf U})$ we have ${_{1{\bf U}}x}<_{\bf U} {_{1{\bf U}}y}$.
Then by Lemma \ref{1occ} we have $\Theta_{\bf u}^{-1}({_{1{\bf U}}x}) = {_{1{\bf u}}z}$,
$\Theta_{\bf v}^{-1}({_{1{\bf V}}x})= {_{1{\bf v}}z}$ for some $z \in \con({\bf u})$, $\Theta_{\bf u}^{-1}({_{1{\bf U}}y}) = {_{1{\bf u}}p}$ and
$\Theta_{\bf v}^{-1}({_{1{\bf V}}y})= {_{1{\bf v}}p}$ for some $p \in \con({\bf u})$.

Since $\Theta^{-1}_{\bf u}$ is a homomorphism from $(\ocs({\bf U}), <_{\bf U})$ to $(\ocs({\bf u}), <_{\bf u})$, we have that
${_{1{\bf u}}z} \le_{\bf u} {_{1{\bf u}}p}$. Since the identity ${\bf u} \approx {\bf v}$ satisfies Property $\mathcal P_{1,1}$, we have ${_{1{\bf v}}z} \le_{\bf v} {_{1{\bf v}}p}$.
If $z \ne p$ then we have ${_{1{\bf V}}x} <_{\bf V} {_{1{\bf V}}y}$ because the map $l_{{\bf U}, {\bf V}}$ restricted to
the set $\{{_{1{\bf U}}x}, {_{1{\bf U}}y}\}$ is a composition of three isomorphisms: $\Theta^{-1}_{\bf u} \circ e_{{\bf u}, {\bf v}} \circ (\Theta^{-1}_{\bf v})^{-1}$.

If $z=p$ then using the fact that the ordered sets $(\Theta_{\bf u}({_{1{\bf u}}z}), <_{\bf U})$ and $(\Theta_{\bf v}({_{1{\bf v}}z}), <_{\bf V})$ correspond to the same word $\Theta(z)$, it is easy to show that ${_{1{\bf V}}x} <_{\bf V} {_{1{\bf V}}y}$. In either case, the pair $\{{_{1{\bf U}}x}, {_{1{\bf U}}y}\}$ is $e_{{\bf u}, {\bf v}}$-stable in ${\bf U} \approx {\bf V}$. Therefore, the identity ${\bf U} \approx {\bf V}$ also satisfies Property $\mathcal P_{1,1}$.
Thus, we have proved that Property ${\mathcal P}_{1,1}$ is substitution-stable.

 (ii)  Let ${\bf u} \approx {\bf v}$ be a $\mathcal P_{1,\mu}$-identity and $\Theta: \mathfrak A \rightarrow \mathfrak A^+$ be a substitution.
 Denote ${\bf U} = \Theta({\bf u})$ and ${\bf V}= \Theta({\bf v})$. Let $x \ne y \in \con({\bf U})$. Since Property $\mathcal P_{1,\mu}$ is stronger than $\mathcal P_{1,1}$, by Lemma \ref{1occ} we may assume that $\Theta_{\bf u}^{-1}({_{1{\bf U}}x})= {_{1{\bf u}}x}$ and $\Theta_{\bf v}^{-1}({_{1{\bf V}}x})= {_{1{\bf v}}x}$. Since ${\bf u} \approx {\bf v}$ is a balanced identity we identify $\ocs({\bf u})$ and $\ocs({\bf v})$. In particular, we identify ${_{1{\bf u}}x}$ and ${_{1{\bf v}}x}$.

Define $\Theta^{-1}_{\bf u}(y): = \{c \in \ocs({\bf u})| c = \Theta^{-1}_{\bf u} ({_{i{\bf U}}y}), 1 \le i \le occ_{\bf U}(y) \}$.
 Define $Y_{\bf u}: = \{c \in \Theta^{-1}_{\bf u}(y) | c \le_{\bf u} ({_{1{\bf u}}x})\}$.
Since ${\bf u} \approx {\bf v}$ satisfies Property $\mathcal P_{1,\mu}$, we have $Y_{\bf u} = Y_{\bf v}$. This implies that the number of occurrences of $y$ which precede
${_{1{\bf U}}x}$ in $\bf U$ is the same as the number of occurrences of $y$ which precede
${_{1{\bf V}}x}$ in $\bf V$. Therefore, the identity ${\bf U} \approx {\bf V}$ also satisfies Property $\mathcal P_{1,\mu}$.
Thus, we have proved that Property ${\mathcal P}_{1,\mu}$ is substitution-stable.

 Properties ${\mathcal P}_{2,2}$  and ${\mathcal P}_{\mu, 2}$  are substitution-stable by dual arguments.
 Since all properties of identities in Definition \ref{stable} are transitive (obvious) and substitution-stable, all these properties are derivation-stable.
\end{proof}

With each subset $\Sigma$ of $\{\sigma_1, \sigma_{\mu}, \sigma_2\}$ we associate an assignment of two Types to all pairs of occurrences of distinct non-linear variables in all words as follows. We say that each pair of occurrences of two distinct non-linear variables in each word is $\{\sigma_1, \sigma_{\mu}, \sigma_2\}$-{\em good}. If $\Sigma$ is a proper subset of $\{\sigma_1, \sigma_{\mu}, \sigma_2\}$, then we say that a pair of occurrences of distinct non-linear variables is $\Sigma$-{\em good} if it is  not declared to be $\Sigma$-{\em bad} in the following definition.

\begin{definition}  \label{goodfact} If $\{c, d\} \subseteq \ocs({\bf u})$ is a pair of occurrences of two distinct non-linear variables $x \ne y$ in a word ${\bf u}$  then

(i)  pair $\{c, d \}$ is  $\{\sigma_{\mu} , \sigma_{2}\}$-bad if $\{c,d\} = \{{_{1{\bf u}}x}, {_{1{\bf u}}y} \}$;

(ii)   pair $\{c, d \}$  is $\{\sigma_1 , \sigma_{\mu}\}$-bad if  $\{c,d\} = \{{_{last{\bf u}}x}, {_{last{\bf u}}y} \}$;

(iii)  pair $\{c, d \}$  is $\{\sigma_1 , \sigma_{2}\}$-bad if   $\{c,d\} = \{{_{1{\bf u}}x}, {_{last{\bf u}}y} \}$.

(iv)  pair $\{c, d \}$  is $\sigma_{\mu}$-bad if $\{c,d\} = \{{_{1{\bf u}}x}, {_{1{\bf u}}y} \}$ or  $\{c,d\} = \{{_{last{\bf u}}x}, {_{last{\bf u}}y} \}$;

(v)  pair $\{c, d \}$  is $\sigma_{2}$-bad  if $c = {_{1{\bf u}}x}$ or $d = {_{1{\bf u}}y}$;

(vi)  pair $\{c, d \}$  is $\sigma_{1}$-bad if  $c = {_{\ell{\bf u}}x}$ or $d = {_{\ell{\bf u}}y}$.

\end{definition}

The following theorem describes the equational theories for each of the seven varieties
defined by the seven subsets of $\{\sigma_1, \sigma_{\mu}, \sigma_2\}$. In particular, it generalizes both Proposition \ref{blbalanced}((i) $\leftrightarrow$ (ii)) and Proposition \ref{1ell}((i) $\leftrightarrow$ (ii)).

\begin{theorem} \label{blbal1}
If $\Sigma \subseteq \{\sigma_1, \sigma_{\mu}, \sigma_2\}$ then for every identity ${\bf u} \approx {\bf v}$ the following conditions are equivalent:

(i) ${\bf u} \approx {\bf v}$ is block-balanced and each $\Sigma$-bad  pair of occurrences of two distinct non-linear variables in ${\bf u}$ is stable in ${\bf u} \approx {\bf v}$;

(ii) ${\bf u} \approx {\bf v}$ can be derived from $\Sigma^\delta$ by swapping $\Sigma$-good adjacent pairs of occurrences;

(iii) ${\bf u} \approx {\bf v}$ is satisfied by $\var(\Sigma^\delta)$.

\end{theorem}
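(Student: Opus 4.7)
The plan is to prove the three equivalences by the cycle (ii) $\Rightarrow$ (iii) $\Rightarrow$ (i) $\Rightarrow$ (ii). The implication (ii) $\Rightarrow$ (iii) is immediate: any identity derivable from $\Sigma^\delta$ holds in every monoid in $\var(\Sigma^\delta)$.

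For (iii) $\Rightarrow$ (i), I would verify that both clauses of (i) are derivation-stable and hold for every identity in $\Sigma^\delta$. Each of $\sigma_1$, $\sigma_\mu$, $\sigma_2$ is block-balanced, and block-balancedness is derivation-stable (Proposition \ref{blbalanced}). For the second clause of (i), observe that, for a block-balanced identity, the condition that every $\Sigma$-bad pair of occurrences of non-linear variables is stable coincides, case by case on $\Sigma$, with one of the properties of Definition \ref{stable} or a conjunction of them---e.g. $\mathcal P_{1,2}$ for $\Sigma=\{\sigma_1,\sigma_2\}$; $\mathcal P_{1,1}$ for $\Sigma=\{\sigma_\mu,\sigma_2\}$; $\mathcal P_{2,2}$ for $\Sigma=\{\sigma_1,\sigma_\mu\}$; $\mathcal P_{1,\mu}$ for $\Sigma=\{\sigma_2\}$; $\mathcal P_{\mu,2}$ for $\Sigma=\{\sigma_1\}$; and $\mathcal P_{1,1}\wedge \mathcal P_{2,2}$ for $\Sigma=\{\sigma_\mu\}$. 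Each of these is derivation-stable by Theorem \ref{stvar2}. A direct inspection using Definition \ref{goodfact} confirms that the unique unstable pair of each $\sigma_i$ is $\sigma_i$-good, hence $\Sigma$-good whenever $\sigma_i\in\Sigma$, so (i) holds for each generator and, by derivation-stability, for every identity in $\Sigma^\delta$.

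For (i) $\Rightarrow$ (ii), I would apply Lemma \ref{main} with $\mathcal P$ the condition of (i), $\mathcal Q$ triviality, and $\dis({\bf u}\approx{\bf v})$ the set of unstable pairs. If such an identity is non-trivial, Lemma \ref{crit} produces a critical pair $\{c,d\}$ adjacent in $\bf u$; block-balancedness forces $c$ and $d$ to be occurrences of distinct non-linear variables, and the $\Sigma$-bad-stability clause in (i) forces $\{c,d\}$ to be $\Sigma$-good. The key auxiliary lemma is that for each $\sigma_i\in\{\sigma_1,\sigma_\mu,\sigma_2\}$, any adjacent $\sigma_i$-good pair of occurrences of two non-linear variables in a word can be swapped by a single application of a substitution instance of $\sigma_i$ (possibly in the reverse direction and possibly under the variable-renaming substitution $x\mapsto y$, $y\mapsto x$). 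Since $\{c,d\}$ is $\Sigma$-good, some $\sigma_i\in\Sigma$ realizes the swap, producing ${\bf u}_1$ with $|\dis({\bf u}_1\approx{\bf v})|=|\dis({\bf u}\approx{\bf v})|-1$ (an adjacent swap affects only the swapped pair) while preserving (i). Lemma \ref{main} then yields the required derivation of ${\bf u}\approx{\bf v}$ from $\Sigma^\delta$ by $\Sigma$-good adjacent swaps.

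The main obstacle is the swap lemma of the previous paragraph. For $\sigma_1=xyt_1xt_2y \approx yxt_1xt_2y$, swapping a $\sigma_1$-good pair---one in which neither member is a last occurrence---requires an additional occurrence of each variable after the pair, in an order matching either $\sigma_1$ directly (an occurrence of the first variable followed by an occurrence of the second) or its variable-renamed form (the opposite order); non-linearity of both variables guarantees that at least one of these two patterns is present. The cases of $\sigma_2$ and $\sigma_\mu$ are handled dually and by analogous case analysis on the relative positions of the remaining occurrences. In each case the constraints packaged into Definition \ref{goodfact} are precisely what is needed for some substitution instance of the relevant identity to realize the adjacent swap.
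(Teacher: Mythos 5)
Your proof is correct and follows essentially the same route as the paper: (ii)$\Rightarrow$(iii) is immediate, (iii)$\Rightarrow$(i) is obtained by checking the two clauses of (i) on $\Sigma^\delta$ and invoking derivation-stability via Theorem \ref{stvar2}, and (i)$\Rightarrow$(ii) is a descent on unstable pairs using critical pairs from Lemma \ref{crit} and adjacent swaps by instances of the identities in $\Sigma^\delta$. The only cosmetic difference is that you run the descent through Lemma \ref{main} directly while the paper phrases it via the two-Type assignment of Lemma \ref{chaos2} (with Type 2 pairs never unstable); your explicit matching of the $\Sigma$-bad-stability clauses with the properties of Definition \ref{stable}, and your case analysis justifying the adjacent-swap step, simply spell out what the paper leaves as ``evident.''
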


\begin{proof} (i) $\rightarrow$ (ii) We assign a Type to each pair $\{c,d \} \subseteq \ocs({\bf u})$ of occurrences of distinct non-linear variables in a word $\bf u$ as follows.  If $\{c, d\}$ is $\Sigma$-good then
we say that $\{c,d \}$ is of Type 1. Otherwise, $\{c,d \}$ is of Type 2.

Let ${\bf u} \approx {\bf v}$ be a block-balanced identity  so that each $\Sigma$-bad  pair of occurrences of two distinct non-linear variables in ${\bf u}$ is stable in ${\bf u} \approx {\bf v}$. Let $\{c,d \} \subseteq \ocs({\bf u})$ be a critical pair in ${\bf u} \approx {\bf v}$.
 Suppose that $\{c,d \}$ is of Type 1. Then using an identity from $\Sigma^\delta$ and swapping $c$ and $d$ in $\bf u$ we obtain some word ${\bf w}$. Evidently, the word ${\bf w}$ satisfies all the requirements of Lemma \ref{chaos2}. Notice that the identity ${\bf u} \approx {\bf v}$ does not have any unstable pairs of Type 2.

(ii) $\rightarrow$ (iii) Obvious.

(iii) $\rightarrow$ (i) Notice that each identity in $({\bf u} \approx {\bf v}) \in \Sigma^\delta$ is block-balanced and each $\Sigma$-bad  pair of occurrences of two distinct non-linear variables in ${\bf u}$ is stable in ${\bf u} \approx {\bf v}$. By Theorem \ref{stvar2} this property is derivation-stable.
\end{proof}

Here are notation-free reformulations of some statements contained in Theorem \ref{blbal1}.

\begin{cor} \label{blbal2}

(i) An identity is a consequence of $\{\sigma_{\mu}\}^\delta$ if and only if it is block-balanced and the orders of the first and the last occurrences of variables
in its left and right sides are the same;

(ii) An identity is a consequence of $\{\sigma_1, \sigma_{\mu}\}^\delta$ if and only it is block-balanced and the order of the last occurrences of variables
in its left and right sides is the same;

(iii) An identity is a consequence of $\{\sigma_2, \sigma_{\mu}\}^\delta$ if and only if it is block-balanced and the order of the first occurrences of variables
in its left and right sides is the same.

\end{cor}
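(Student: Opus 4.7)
The plan is to deduce all three parts directly as special cases of Theorem \ref{blbal1}, applied in turn with $\Sigma = \{\sigma_\mu\}$, $\Sigma = \{\sigma_1, \sigma_\mu\}$, and $\Sigma = \{\sigma_2, \sigma_\mu\}$. The equivalence (ii)$\leftrightarrow$(iii) of Theorem \ref{blbal1} already identifies ``consequence of $\Sigma^\delta$'' with ``satisfied by $\var(\Sigma^\delta)$,'' so the entire task is to rephrase condition (i) of that theorem, namely that the identity is block-balanced and every $\Sigma$-bad pair of occurrences of distinct non-linear variables is stable, in the notation-free language used in the corollary.

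Next I would read off the $\Sigma$-bad pairs from Definition \ref{goodfact}. For $\Sigma = \{\sigma_\mu\}$, clause (iv) says the bad pairs of non-linear occurrences are precisely those of the form $\{_{1{\bf u}}x,\ _{1{\bf u}}y\}$ and $\{_{last{\bf u}}x,\ _{last{\bf u}}y\}$, so stability of all such pairs is exactly the assertion that the orders of both first and last occurrences of non-linear variables coincide in $\bf u$ and $\bf v$. For $\Sigma = \{\sigma_1,\sigma_\mu\}$, clause (ii) restricts the bad pairs to pairs of last occurrences, and for $\Sigma = \{\sigma_2,\sigma_\mu\}$, clause (i) restricts them dually to pairs of first occurrences. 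Matching these three readings against parts (i), (ii), (iii) of the corollary is a triviality.

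The only point that requires a brief argument is the strengthening from ``non-linear variables'' (as stated in Theorem \ref{blbal1}) to ``variables'' (as stated in the corollary). Here I would observe that in a block-balanced identity ${\bf u} \approx {\bf v}$ we have $\lin({\bf u}) = \lin({\bf v})$ and ${\bf u}(\lin({\bf u})) = {\bf v}(\lin({\bf v}))$, so the order of the (coincident first and last) occurrences of linear variables is literally preserved; moreover, block-balancedness guarantees that each occurrence of each non-linear variable sits in the same block on both sides, so the relative order of any mixed pair (one linear occurrence, one non-linear occurrence) is automatically stable. Consequently, once the identity is block-balanced, preservation of the order of first (respectively last) occurrences among non-linear variables is equivalent to preservation of that order among all variables, completing the translation. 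I do not anticipate any obstacle beyond this routine bookkeeping, since all of the combinatorial work has already been absorbed into Theorem \ref{blbal1}.
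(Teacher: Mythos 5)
Your proposal is correct and matches the paper's intent exactly: the paper offers no separate proof of Corollary \ref{blbal2}, presenting it only as a ``notation-free reformulation'' of Theorem \ref{blbal1}, and your derivation — specializing $\Sigma$, reading off the bad pairs from Definition \ref{goodfact}, and noting that block-balancedness already fixes the positions of linear letters and of every occurrence relative to them, so the restriction to non-linear variables is immaterial — is precisely the routine bookkeeping the paper leaves implicit.
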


\section{Another proof that the monoid of all reflexive binary relations on a four-element set is finitely based}

Recall from section 2 that by the result of Volkov \cite{MV2}, $J_3$ is the equational theory of the monoid of all reflexive binary relations on a four-element set.
In this section we use Lemma \ref{main} to reprove the following result of Blanchet-Sadri.

\begin{theorem} \cite[Theorem 3.6]{BS} \label{J31}  The set of identities $J_3$ is finitely based by
$\{xt_1xt_2x \approx xt_1xxt_2x, xt_1yxxt_2y \approx xt_1xyxxt_2y, yt_1xxyt_2x \approx yt_1xxyxt_2x, (xy)^3 \approx (yx)^3 \}$.
\end{theorem}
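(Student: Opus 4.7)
My plan is to apply Lemma \ref{main} in a two-stage reduction. Let $\Delta$ denote the given four-identity basis. The easy direction of the theorem, that each identity in $\Delta$ belongs to $J_3$, is a routine check on scattered subwords of length at most three. The content of the theorem is the reverse derivation $\Delta \vdash J_3$, and I would organize this as two successive applications of Lemma \ref{main}.

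For the first stage, I would reduce an arbitrary $J_3$-identity $\mathbf{u} \approx \mathbf{v}$ to a balanced $J_3$-identity. Since $J_3 \subseteq J_2$, Proposition \ref{Straub} forces every $J_3$-identity to be a $\mathcal{P}_{1,2}$-identity, and in particular $\mathrm{con}(\mathbf{u}) = \mathrm{con}(\mathbf{v})$, $\mathrm{lin}(\mathbf{u}) = \mathrm{lin}(\mathbf{v})$ and $\mathrm{non}(\mathbf{u}) = \mathrm{non}(\mathbf{v})$. Moreover, length-two and length-three scattered subwords of the forms $xx$, $xxy$, $yxx$, and $xxx$ guarantee that whenever $\mathrm{occ}_\mathbf{u}(x) \neq \mathrm{occ}_\mathbf{v}(x)$, both sides have at least three occurrences of $x$. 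I would measure distance to the balanced property by $\dis = \sum_{x} |\mathrm{occ}_\mathbf{u}(x) - \mathrm{occ}_\mathbf{v}(x)|$ and show that $\dis$ can always be strictly decreased modulo $\Delta$: the $\delta$-closure of $xt_1xt_2x \approx xt_1xxt_2x$ provides the semigroup law $x^3 \approx x^4$, which lets one equate adjacent occurrences; the two identities $xt_1yxxt_2y \approx xt_1xyxxt_2y$ and $yt_1xxyt_2x \approx yt_1xxyxt_2x$ handle the trickier cases where the offending block of $x$'s lies alongside a block already containing $xx$, which is precisely the situation required to avoid leaving $\mathcal{P}_{1,2}$.

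For the second stage, I would apply the accelerated version Lemma \ref{chaos2} to balanced $J_3$-identities. I would assign Type 2 to every pair $\{c,d\} \subseteq \ocs(\mathbf{u})$ of occurrences of distinct non-linear variables such that at least one of $c,d$ is a first or last occurrence of its variable, and Type 1 to every other pair of occurrences of distinct non-linear variables. The $\mathcal{P}_{1,2}$-property inherited from $J_3$ means Type 2 pairs cannot be unstable, so the only critical pairs to handle are of Type 1. For a critical Type 1 pair $\{c,d\}$ with $c \ll_\mathbf{u} d$, both occurrences are interior, so by the balanced $J_3$-property each of the underlying variables $x,y$ occurs at least three times, with at least one occurrence of $x$ strictly before $c$ and at least one of $y$ strictly after $d$, and symmetrically on the other side. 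I would use the three non-swap identities of $\Delta$ to pull extra $x$'s and $y$'s adjacent to $\{c,d\}$ to assemble a local $(xy)^3$-pattern, apply $(xy)^3 \approx (yx)^3$ to swap $c$ and $d$, and then use the same identities in reverse to restore the surroundings, yielding a word $\mathbf{w}$ as demanded by Lemma \ref{chaos2}.

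The main obstacle is the assembly of the local $(xy)^3$-pattern in the Type 1 swap. I expect this to require careful bookkeeping: one must show that enough occurrences of $x$ and $y$ can always be drawn in around the critical pair without displacing first or last occurrences (which would violate $\mathcal{P}_{1,2}$ and hence fail to produce a $J_3$-identity), and that the auxiliary $xxy$- and $yxx$-blocks required to apply the second and third identities of $\Delta$ are indeed available given the three-or-more occurrence guarantee. Once this local lemma is in place, Lemma \ref{chaos2} closes the second stage, Lemma \ref{main} closes the first, and together with the initial observation $\Delta \subseteq J_3$ the theorem follows.
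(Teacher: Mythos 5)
There is a genuine gap, and it sits at the heart of the argument. In your second stage you declare every pair involving a first or last occurrence to be Type~2 and assert that ``the $\mathcal{P}_{1,2}$-property inherited from $J_3$ means Type 2 pairs cannot be unstable.'' This is false: Property $\mathcal{P}_{1,2}$ only controls pairs of the form $\{{_{1{\bf u}}x}, {_{last{\bf u}}y}\}$; it says nothing about a pair of \emph{two first} occurrences $\{{_{1{\bf u}}x}, {_{1{\bf u}}y}\}$, a pair of \emph{two last} occurrences, or a pair consisting of a first (or last) occurrence and an interior occurrence. Concretely, the identity $xyxytxty \approx yxyxtxty$ lies in $J_3$ and its pair of first occurrences of $x$ and $y$ is unstable; dually $xtytxyxy \approx xtytyxyx$ has an unstable pair of last occurrences. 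These are exactly the identities the paper collects into $\Sigma$, and handling them is the bulk of the paper's proof (Lemma~\ref{J3c7}): one first establishes Properties $\mathcal{P}_{1,1}$ and $\mathcal{P}_{2,2}$ by clearing out the occurrences between two transposed first occurrences (Claims~\ref{J3c4}--\ref{J3c6}, via the insertion identities and Lemma~\ref{axil}), assembling a local $xyxy\,t\,x\,t\,y$ pattern, and applying $\Sigma^\delta$. Your plan never produces or invokes anything playing the role of $\Sigma$, so unstable first-occurrence and last-occurrence pairs are simply never resolved; no amount of bookkeeping in the Type~1 case can repair this, since Lemma~\ref{chaos2} requires \emph{every} critical pair of every Type to be dealt with.

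Two secondary points. First, even for genuinely interior critical pairs your plan to assemble a contiguous $(xy)^3$ block and apply $(xy)^3 \approx (yx)^3$ is more demanding than what $J_3$ permits: linear letters between the first occurrence of $x$ and the critical pair cannot be crossed, which is why the paper instead uses the identities $\Delta = \{xt y t x y t x t y \approx x t y t y x t x t y,\ \dots\}$ (with the linear letters already built in) for the middle swaps, after normalizing the ``12-blocks'' to have equal content (Claim~\ref{J3block}). Second, the paper never reduces to balanced identities at all; it works with unbalanced $J_3$-identities throughout and only equalizes the contents of corresponding 12-blocks at the end. Your stage-one reduction to balanced identities is not obviously wrong, but it is not needed and does not substitute for the missing $\mathcal{P}_{1,1}\wedge\mathcal{P}_{2,2}$ reduction.
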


It is easy to check that $J_3$ contains the following sets of identities:

$\Sigma = \{xyxytxty \approx yxyxtxty, xtytxyxy \approx xtytyxyx\}$;

$\Delta = \{xtytxytxty \approx xtytyxtxty, xtytxytytx \approx xtytyxtytx\}$.

One can verify that in the presence of

$\{xt_1xt_2x \approx xt_1xxt_2x, xt_1yxxt_2y \approx xt_1xyxxt_2y, yt_1xxyt_2x \approx yt_1xxyxt_2x\}$ the identity
$(xy)^3 \approx (yx)^3$ is equivalent to $\Sigma \cup \Delta$.
The next theorem claims a larger basis for $J_3$ than Theorem \ref{J31} but the identities in this basis still contain only two non-linear variables.

\begin{theorem} \label{J3} The set of identities $J_3$ is finitely based by
$\{xt_1xt_2x \approx xt_1xxt_2x, xt_1yxxt_2y \approx xt_1xyxxt_2y, yt_1xxyt_2x \approx yt_1xxyxt_2x\} \cup \Sigma \cup \Delta$.
\end{theorem}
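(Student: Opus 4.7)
The plan is to apply the method of Lemma \ref{chaos2} to the equational theory $J_3$, mirroring the structure of the proofs of Theorems \ref{fbtlem1} and \ref{fbtlem}. Let $\Omega$ denote the claimed basis. First I would verify directly that each identity in $\Omega$ belongs to $J_3$ by comparing scattered subwords of length at most $3$ on the two sides. Since $J_3$-identities need not be balanced (for example $x^3 \approx x^4 \in J_3$), I would next use $xt_1xt_2x \approx xt_1xxt_2x$, together with suitable substitutions, to derive standard ``collapse'' consequences such as $x^{k} \approx x^{k+1}$ in appropriate contexts for $k \ge 3$ and $xt_1 x^a t_2 x \approx xt_1 x t_2 x$ for all $a \ge 1$, and analogous consequences of the two flanking identities $xt_1yxxt_2y \approx xt_1xyxxt_2y$ and $yt_1xxyt_2x \approx yt_1xxyxt_2x$ that collapse blocks in the presence of a neighboring non-linear variable.

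With these derived consequences in hand I would apply Lemma \ref{main} twice: first with a discrepancy function measuring departure from a canonical $3$-limited profile, to reduce any $J_3$-identity modulo $\Omega$ to a canonical $3$-limited identity; then with the discrepancy function $\dis(\mathcal P_1 \rightarrow \text{block-balanced})({\bf u} \approx {\bf v}) = \{x \in \con({\bf u}) \mid {\bf u}(x, \lin({\bf u})) \neq {\bf v}(x, \lin({\bf u}))\}$ used in the proof of Lemma \ref{albb}, to reduce further to a block-balanced $J_3$-identity. At each step the key input is the ability to rearrange and normalize occurrences of a single non-linear variable (possibly in the presence of one companion variable), which is precisely what the first three identities of $\Omega$ afford. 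Finally I would apply Lemma \ref{chaos2} to block-balanced $3$-limited $J_3$-identities, using a Type assignment that distinguishes pairs $\{c,d\} \subseteq \ocs({\bf u})$ of occurrences of distinct non-linear variables $x \neq y$ according to (a) whether both variables are $2$-occurring or at least one is $3$-occurring and (b) the arrangement of the relevant occurrences and linear letters between the first and last occurrences of $x$ and of $y$. Type $1$ pairs are swaps performed directly by the first three identities of $\Omega$; Type $2$ pairs correspond to configurations handled by $\Sigma$; Type $3$ pairs are handled by $\Delta$. For each critical pair, the word ${\bf w}$ required by Lemma \ref{chaos2} is obtained by first reshaping the relevant block of ${\bf u}$ via the first three identities of $\Omega$ (to compact the needed occurrences of $x$ and $y$ next to each other) and then applying the matching identity from $\Sigma \cup \Delta$.

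The main obstacle will be this final case analysis: verifying that $\Sigma \cup \Delta$ is rich enough to perform every swap needed in block-balanced $3$-limited $J_3$-identities with two non-linear variables, or equivalently, that every such $2$-variable identity not already a consequence of the first three identities of $\Omega$ is a consequence of $\Sigma \cup \Delta$. The $2$-variable, $3$-limited, block-balanced $J_3$-identities form a finite family modulo the collapse consequences, but the enumeration is delicate; in particular, the role of the second identity in each of $\Sigma$ and $\Delta$ (which is dual to the first) is crucial for handling the ``mirror'' critical pairs where $y$ plays the role that $x$ played in the first identity. Once this case analysis is complete, Lemma \ref{chaos2} yields that every block-balanced $3$-limited $J_3$-identity is a consequence of $\Omega$, and combined with the two earlier reductions this gives the finite basis.
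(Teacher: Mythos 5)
There is a genuine gap, and it occurs before the case analysis you flag as the main obstacle. Your first reduction --- normalizing every $J_3$-identity to a $3$-limited one modulo $\Omega$ --- is impossible: the word $xt_1xt_2xt_3x$ is not $J_3$-equivalent to any $3$-limited word, since the scattered subwords $xt_1$, $t_1xt_2$, $t_2xt_3$, $t_3x$ force at least four occurrences of $x$ in any $J_3$-equivalent word. Unlike $J_2$, where $xt_1xt_2x \approx xt_1t_2x$ deletes an occurrence outright, the identity $xt_1xt_2x \approx xt_1xxt_2x$ only collapses repetitions \emph{within} a block, so $J_3$ places no bound on the number of occurrences of a variable. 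Your second reduction, to block-balanced identities (and then to identities with two non-linear variables), is modelled on Lemma \ref{albb} and Claim \ref{fb21}, but those arguments compactify a variable by transposing adjacent occurrences via $\{\sigma_1,\sigma_\mu\}^\delta$ --- and none of $\sigma_1,\sigma_\mu,\sigma_2$ lies in $J_3$ (e.g.\ $xyx$ is a scattered subword of the left side of $\sigma_1$ but not of its right side). The first three identities of $\Omega$ afford insertion and deletion of occurrences, not rearrangement, so the compactification step has no analogue and the proposed discrepancy function $\dis(\mathcal P_1 \rightarrow \text{block-balanced})$ cannot be driven to zero by the means you describe.

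The paper's proof takes a different route that avoids all three reductions. Its engine is Lemma \ref{axil}, an \emph{insertion} lemma: under a content hypothesis, ${\bf A}{\bf B}x{\bf C} \approx {\bf A}x{\bf B}x{\bf C}$ follows from the first two identities of $\Omega$. Lemma \ref{J3c7} then uses Lemma \ref{main} with the discrepancy function counting unstable pairs of first occurrences and of last occurrences: if the orders of first occurrences disagree, one shows both offending variables occur at least three times, inserts occurrences via Lemma \ref{axil} to manufacture a prefix of the shape $xyxy\ldots$, and applies an identity of $\Sigma$ to swap them, reducing to a $\mathcal P_{1,1} \wedge \mathcal P_{1,2} \wedge \mathcal P_{2,2}$-identity. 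Lemma \ref{J3c9} then matches the contents of corresponding ``12-blocks'' (again by insertion) and finishes with $\Delta$. No passage through $3$-limited, block-balanced, or two-variable identities occurs; the roles of $\Sigma$ and $\Delta$ are to fix the order of first/last occurrences and to rearrange 12-blocks, respectively, not to realize swaps in a Type assignment for Lemma \ref{chaos2}.
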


Theorem \ref{J3} is an immediate consequence of Lemmas \ref{J3c7} and \ref{J3c9}.
The {\em length} of a word $\bf u$ is the cardinality of $\ocs({\bf u})$. First, we need one auxiliary lemma.

\begin{lemma} \label{axil} $\{xt_1xt_2x \approx xt_1xxt_2x, xt_1yxxt_2y \approx xt_1xyxxt_2y \}^\delta \vdash {\bf A B} x {\bf C} \approx {\bf A} x {\bf B} x {\bf C}$

whenever $x \in \con ({\bf A})$, $x \in \con ({\bf C})$ and $\con ({\bf B}) \subseteq \con ({\bf C})$.
\end{lemma}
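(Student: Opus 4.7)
The plan is to prove the lemma by induction on the length $n$ of ${\bf B}$. For the base case $n=0$, we need ${\bf A}x{\bf C} \approx {\bf A}xx{\bf C}$: writing ${\bf A} = {\bf A}_1 x {\bf A}_2$ with the displayed $x$ being the last $x$ of ${\bf A}$ (so ${\bf A}_2$ is $x$-free) and ${\bf C} = {\bf C}_1 x {\bf C}_2$ with the displayed $x$ the first $x$ of ${\bf C}$, this follows by substituting $t_1 \mapsto {\bf A}_2$ and $t_2 \mapsto {\bf C}_1$ into the first generating identity $xt_1xt_2x \approx xt_1xxt_2x$ and surrounding with the context words ${\bf A}_1$ and ${\bf C}_2$.

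For the inductive step, write ${\bf B} = y{\bf B}'$ with $y$ the first letter. If $y = x$, the claim reduces immediately to the base case applied with ${\bf C}$ replaced by ${\bf B}'x{\bf C}$, which still contains $x$. If $y \ne x$, then $y \in \con({\bf C})$ because $\con({\bf B}) \subseteq \con({\bf C})$, and I follow a three-step ``inflate--transport--deflate'' chain: (a) ${\bf A}y{\bf B}'x{\bf C} \approx {\bf A}yx{\bf B}'x{\bf C}$; (b) ${\bf A}yx{\bf B}'x{\bf C} \approx {\bf A}xyx{\bf B}'x{\bf C}$; (c) ${\bf A}xyx{\bf B}'x{\bf C} \approx {\bf A}xy{\bf B}'x{\bf C}$. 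Concatenating gives ${\bf A}{\bf B}x{\bf C} \approx {\bf A}x{\bf B}x{\bf C}$. Steps (a) and (c) are the induction hypothesis applied with ${\bf B}^* = {\bf B}'$ (of length $n-1<n$); the content inclusion $\con({\bf B}') \subseteq \con({\bf C})$ is inherited from the hypothesis on ${\bf B}$.

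Step (b) is the central step. When $n \ge 2$ it is itself the induction hypothesis applied with ${\bf A}^* = {\bf A}$, ${\bf B}^* = y$ (of length $1<n$), and ${\bf C}^* = {\bf B}'x{\bf C}$, which contains both $x$ and $y$. When $n=1$, so ${\bf B}' = \emptyset$, I apply the second generating identity $xt_1yxxt_2y \approx xt_1xyxxt_2y$ directly: writing ${\bf A} = {\bf A}_1 x {\bf A}_2$ with ${\bf A}_2$ $x$-free and ${\bf C} = {\bf C}_1 y {\bf C}_2$ with ${\bf C}_1$ $y$-free (which is possible because $y \in \con({\bf C})$), the substitution $t_1 \mapsto {\bf A}_2$, $t_2 \mapsto {\bf C}_1$ turns the pattern $x{\bf A}_2 y xx {\bf C}_1 y$ (appearing as a substring of ${\bf A}yxx{\bf C}$) into $x{\bf A}_2 xy xx {\bf C}_1 y$, yielding ${\bf A}yxx{\bf C} \approx {\bf A}xyxx{\bf C}$.

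The main obstacle is the rigid shape of the second generating identity, which demands a literal $xx$ directly after $y$ before one may insert a new $x$ in front of $y$. Step (a) in the case $n=1$ is precisely what produces this $xx$, using the first identity to duplicate the single $x$ sitting between ${\bf B}$ and ${\bf C}$; step (c) then disposes of the surplus $x$. At every higher level of the induction the same inflate--transport--deflate maneuver is reused, with the induction hypothesis playing the role that the second generating identity plays in the base case.
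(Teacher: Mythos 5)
Your proof is correct and uses essentially the same mechanism as the paper: the first identity duplicates or deletes an occurrence of $x$ sandwiched between occurrences of $x$ in ${\bf A}$ and ${\bf C}$, and the second identity carries an $x$ across a letter $y$ of ${\bf B}$ precisely because $y$ recurs in ${\bf C}$. The only difference is organizational — you package the transport of $x$ through ${\bf B}$ as an induction on $|{\bf B}|$ (which, if anything, makes the bookkeeping of the surplus occurrences of $x$ more airtight than the paper's ``and so on, after $p$ applications'' iteration).
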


\begin{proof} Evidently, $\{xt_1xt_2x \approx xt_1xxt_2x\}^\delta \vdash {\bf A B} x {\bf C} \approx {\bf A} {\bf B} xx {\bf C}$.

If ${\bf u} = {\bf A B} x x {\bf C}$ then $\ocs ({\bf u}) = a_1 \ll_{\bf u} a_2 \ll_{\bf u} \dots \ll_{\bf u} a_k \ll_{\bf u} b_1 \ll_{\bf u} b_2 \ll_{\bf u} \dots \ll_{\bf u} b_p \ll_{\bf u} ({_{i{\bf u}}x}) \ll_{\bf u} ({_{{(i+1)}{\bf u}}x}) \ll_{\bf u} c_1 \ll_{\bf u} c_2 \ll_{\bf u} \dots \ll_{\bf u} c_q$,
where $k, p, q \ge 0$ are the lengths of ${\bf A}, {\bf B}, {\bf C}$ respectively and $1< i < i+1 < occ_{\bf u}(x)$.

By our assumption, for each $1 \le i \le p$, $b_i \in \ocs ({\bf u})$ is not the last occurrence in ${\bf u}$ of some variable $y \in \con({\bf B})$.
Therefore, $\{xt_1yxxt_2y \approx xt_1xyxxt_2y\}^\delta \vdash {\bf A B} x x{\bf C} \approx {\bf u_1}$, where
$\ocs ({\bf u_1}) = a_1 \ll_{\bf u} a_2 \ll_{\bf u} \dots \ll_{\bf u} a_k \ll_{\bf u} b_1 \ll_{\bf u} b_2 \ll_{\bf u} \dots \ll_{\bf u}   ({_{{i}{\bf u}}x}) \ll_{\bf u} b_p  \ll_{\bf u} ({_{{(i+1)}{\bf u}}x})({_{{(i+2)}{\bf u}}x}) \ll_{\bf u} c_1 \ll_{\bf u} c_2 \ll_{\bf u} \dots \ll_{\bf u} c_q$.

And so on. After applying the identities in  $\{xt_1xt_2x \approx xt_1xxt_2x, xt_1yxxt_2y \approx xt_1xyxxt_2y \}^\delta$ $p$ times, we obtain ${\bf u_p} = {\bf A} x {\bf B} x {\bf C}$.
\end{proof}

We will use the properties of identities in Definition \ref{stable}. For each $\mathcal P_1$-identity ${\bf u} \approx {\bf v}$ we define

$\bullet$ $\dis (\mathcal P_1 \rightarrow \mathcal P_{1,1} \wedge \mathcal P_{2,2})({\bf u} \approx {\bf v}): = \{ \{{_{{1}{\bf u}}x}, {_{{1}{\bf u}}y}\} \mid  x,y \in \con({\bf u}), {_{{1}{\bf u}}x} <_{\bf u} {_{{1}{\bf u}}y}, {_{{1}{\bf v}}y} <_{\bf v} {_{{1}{\bf v}}x} \} \cup  \{ \{{_{{last}{\bf u}}x}, {_{{last}{\bf u}}y}\} \mid  x,y \in \con({\bf u}), {_{{last}{\bf u}}x} <_{\bf u} {_{{last}{\bf u}}y}, {_{{last}{\bf v}}y} <_{\bf v} {_{{last}{\bf v}}x} \}$.

In other words, $\dis( \mathcal P_1 \rightarrow \mathcal P_{1,1} \wedge \mathcal P_{2,2})({\bf u} \approx {\bf v})$ is the set of all unstable pairs of the form $\{{_{{1}{\bf u}}x}, {_{{1}{\bf u}}y}\}$ or $\{{_{{last}{\bf u}}x}, {_{{last}{\bf u}}y}\}$ in a $\mathcal P_1$-identity ${\bf u} \approx {\bf v}$. It is easy to see that the set $\dis( \mathcal P_1\rightarrow \mathcal P_{1,1} \wedge \mathcal P_{2,2})({\bf u} \approx {\bf v})$ is empty if and only if ${\bf u} \approx {\bf v}$
is a $\mathcal P_{1,1} \wedge \mathcal P_{2,2}$-identity.

\begin{lemma} \label{J3c7}  Every identity in $J_3$ can be derived from $\{xt_1xt_2x \approx xt_1xxt_2x, xt_1yxxt_2y \approx xt_1xyxxt_2y, yt_1xxyt_2x \approx yt_1xxyxt_2x\}^\delta \cup \Sigma^\delta$ and from a $\mathcal P_{1,1} \wedge \mathcal P_{1,2} \wedge \mathcal P_{2,2}$-identity in $J_3$. \end{lemma}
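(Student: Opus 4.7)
The plan is to apply Lemma \ref{main} with $\mathcal{P}$ being ``is a $\mathcal{P}_1$-identity belonging to $J_3$'', $\mathcal{Q}$ being ``is a $\mathcal{P}_{1,1} \wedge \mathcal{P}_{1,2} \wedge \mathcal{P}_{2,2}$-identity belonging to $J_3$'', the set $\Sigma$ of Lemma \ref{main} being $\Gamma := \{xt_1xt_2x \approx xt_1xxt_2x, xt_1yxxt_2y \approx xt_1xyxxt_2y, yt_1xxyt_2x \approx yt_1xxyxt_2x\}^\delta \cup \Sigma^\delta$, and the measure being the $\dis(\mathcal{P}_1 \rightarrow \mathcal{P}_{1,1} \wedge \mathcal{P}_{2,2})$ defined just above the statement. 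The first observation is that every $J_3$-identity is already a $\mathcal{P}_{1,2}$-identity, by $J_3 \subseteq J_2$ and Proposition \ref{Straub}; hence a $J_3$-identity satisfies $\mathcal{Q}$ exactly when its $\dis$-set is empty, and verifying the hypothesis of Lemma \ref{main} reduces to producing a single swap step.

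For the swap step, let ${\bf u} \approx {\bf v}$ be a $J_3$-identity whose $\dis$-set is non-empty. Using the left--right duality (the third listed axiom mirrors the second, and the second identity of $\Sigma$ mirrors the first), I may assume that there exist variables $x \ne y$ with ${_{1{\bf u}}x} <_{\bf u} {_{1{\bf u}}y}$ yet ${_{1{\bf v}}y} <_{\bf v} {_{1{\bf v}}x}$. Property $\mathcal{P}_{1,2}$ forces ${_{1{\bf v}}x} <_{\bf v} {_{last{\bf v}}y}$, so $y$ has at least two occurrences in ${\bf v}$, and hence in ${\bf u}$; dually, $x$ has at least two occurrences. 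Since ${\bf v}$ contains the scattered subword $yxy$ and $J_3$ preserves it, ${\bf u}$ must also contain $yxy$, which combined with ${_{1{\bf u}}x} <_{\bf u} {_{1{\bf u}}y}$ forces a pattern $x \cdots y \cdots x \cdots y$ in ${\bf u}$.

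Next I would construct ${\bf u}_1$ with $\Gamma \vdash {\bf u} \approx {\bf u}_1$, with ${\bf u}_1 \approx {\bf v}$ still in $J_3$ (for which it suffices that each axiom in $\Gamma$ is itself a $J_3$-identity, a routine check of scattered $3$-subwords), and such that the first occurrences of $x$ and $y$ in ${\bf u}_1$ are interchanged relative to those in ${\bf u}$. This I would do by pumping: Lemma \ref{axil} lets me insert extra copies of $x$ and $y$ into ${\bf u}$, producing a word of the form ${\bf P}\,xyxy\,t_1\,x\,t_2\,y\,{\bf Q}$ in which the displayed first $x$ and first $y$ are precisely ${_{1{\bf u}}x}$ and ${_{1{\bf u}}y}$; then apply a suitable $\delta$-image of the identity $xyxy\,t_1\,x\,t_2\,y \approx yxyx\,t_1\,x\,t_2\,y$ from $\Sigma$ to swap these two occurrences; and finally reverse the pumping via Lemma \ref{axil} to delete the inserted duplicates. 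The resulting ${\bf u}_1 \approx {\bf v}$ has one fewer unstable first-occurrence pair and no new unstable last-occurrence pair, as required.

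The main obstacle is this pumping construction. One must verify that Lemma \ref{axil} is applicable at the required location, that the correct $\delta$-image of the $\Sigma$-identity is available in every configuration that can arise (particularly when some of the linear letters $t_1$, $t_2$ fail to sit between the chosen occurrences and so must be suppressed via $\delta$), and that the reverse pumping preserves the swap without altering any other first- or last-occurrence pair or violating $\mathcal{P}_{1,2}$. Once this is established, Lemma \ref{main} produces from the given $J_3$-identity a $\mathcal{P}_{1,1} \wedge \mathcal{P}_{1,2} \wedge \mathcal{P}_{2,2}$-identity in $J_3$ together with a derivation from $\Gamma$, which is exactly the claim of the lemma.
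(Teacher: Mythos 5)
Your overall strategy coincides with the paper's: reduce to Lemma \ref{main} with the measure $\dis(\mathcal P_1 \rightarrow \mathcal P_{1,1}\wedge\mathcal P_{2,2})$, observe via Proposition \ref{Straub} that every $J_3$-identity already has Property $\mathcal P_{1,2}$, and perform one swap of an unstable pair of first occurrences by padding $\bf u$ into the shape $\cdots xyxy\cdots x\cdots y\cdots$ and applying an identity from $\Sigma^\delta$. However, the step you flag as ``the main obstacle'' is not a technicality to be checked later; it is the entire mathematical content of the lemma, and you have not supplied it. Each application of Lemma \ref{axil} requires $x\in\con({\bf A})$, $x\in\con({\bf C})$ and $\con({\bf B})\subseteq\con({\bf C})$, and verifying these hypotheses at the three places where you need them is exactly what the paper's auxiliary claims do, each by a scattered-subword argument specific to $J_3$: (a) every variable occurring between ${_{1{\bf u}}x}$ and ${_{1{\bf u}}y}$ reoccurs between ${_{1{\bf u}}y}$ and the next last occurrence, so those occurrences can be erased; (b) no last occurrence of a non-linear variable other than $x$ sits between the first $y$ and the second $x$, so an $x$ can be inserted there; (c) the analogous statement for inserting a $y$. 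None of these is routine, and (a) moreover depends on choosing the unstable pair minimally, i.e.\ so that no first or last occurrence of a $\ge 3$-occurring variable lies between ${_{1{\bf u}}x}$ and ${_{1{\bf u}}y}$ --- a choice your argument does not make.

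A second concrete gap: from the scattered subword $yxy$ of $\bf v$ you derive only $occ_{\bf u}(x)\ge 2$ and $occ_{\bf u}(y)\ge 2$, but Lemma \ref{axil} needs the inserted variable to occur both strictly before and strictly after the displayed occurrence, so the construction of your padded word ${\bf P}\,xyxy\,t_1xt_2y\,{\bf Q}$ actually requires $occ_{\bf u}(x)\ge 3$ and $occ_{\bf u}(y)\ge 3$. The paper proves this separately (if $occ_{\bf u}(y)=2$ then $\bf u$ contains the scattered subword $xyy$ while $\bf v$ does not), and without it the pumping step can fail. Finally, your last step of ``reversing the pumping'' to delete the inserted duplicates is both unnecessary --- the measure counts only unstable pairs of first and of last occurrences, which the extra middle occurrences do not affect, so the paper simply keeps them --- and itself unjustified, since after the swap the hypotheses of Lemma \ref{axil} would have to be rechecked in the modified word.
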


\begin{proof} Let ${\bf u} \approx {\bf v}$ be an identity in $J_3$. Since $J_3 \subset J_2$, Proposition \ref{Straub} implies that every identity in $J_3$ has Property $\mathcal P_{1,2}$. Suppose that ${\bf u} \approx {\bf v}$ does not have Property $\mathcal P_{1,1}$. Then for some $x \ne y \in \con({\bf u})$
we have that ${_{1{\bf u}}x} <_{\bf u} {_{1{\bf u}}y}$, ${_{1{\bf v}}y} <_{\bf v} {_{1{\bf v}}x}$ and for each $c \in \ocs({\bf u})$ such that ${_{1{\bf u}}x} <_{\bf u} c <_{\bf u} {_{1{\bf u}}y}$, $c$ is neither the first nor the last occurrence of some variable $z$ with $occ_{\bf u}(z) \ge 3$.

\begin{claim} \label{J3c5}
 $occ_{\bf u}(x) \ge 3$ and $occ_{\bf u}(y) \ge 3$.
\end{claim}

\begin{proof} First, suppose that one of the variables, say $y$, is linear. Then  the word $\bf u$ contains the scattered subword $xy$ but the word $\bf v$ does not contain the scattered subword $xy$. If $occ_{\bf u}(y) = 2$ then  the word $\bf u$ contains the scattered subword $xyy$ but the word $\bf v$ does not contain the scattered subword $xyy$. To avoid a contradiction, we must assume that $occ_{\bf u}(x) \ge 3$ and $occ_{\bf u}(y) \ge 3$.
\end{proof}

Let $d \in \ocs({\bf u})$ be minimal in order $<_{\bf u}$ such that ${_{1{\bf u}}y} <_{\bf u} d$ and $d$ is the last occurrence of some variable $p \in \con({\bf u})$.
(The variable $p$ may coincide with $x$ or $y$).

\begin{claim} \label{J3c4}
 If the word $\bf u$ contains an occurrence of a variable $z$ between ${_{1{\bf u}}x}$ and ${_{1{\bf u}}y}$ then the word $\bf u$ also contains an occurrence of $z$ between ${_{1{\bf u}}y}$ and ${_{last{\bf u}}p}$. (The variable $z$ may coincide with $x$ or $p$).

\end{claim}

\begin{proof} Since ${\bf u} \approx {\bf v}$ is a $\mathcal P_{1,2}$-identity, we have that ${_{1{\bf v}}y} <_{\bf v} {_{1{\bf v}}x} <_{\bf v} {_{last{\bf v}}p}$.

To obtain a contradiction, assume that the variable $z$ does not appear between ${_{1{\bf u}}y}$ and ${_{last{\bf u}}p}$. Then the word $\bf u$
does not contain the scattered subword $yzp$. Consequently, there is no occurrence of $z$ between ${_{1{\bf v}}y}$ and ${_{last{\bf v}}p}$ neither.
Therefore, the word $\bf v$ does not contain the scattered subword $xzp$; a contradiction.
\end{proof}

Using Lemma \ref{axil} we erase all occurrences of variables (if any) between ${_{1{\bf u}}x}$ and ${_{1{\bf u}}y}$ and denote the
resulting word by ${\bf w_1}$. Notice that $({_{1{\bf w_1}}x})\ll_{\bf w_1} ({_{1{\bf w_1}}y})$.
Lemma \ref{axil} implies that $\{xt_1xt_2x \approx xt_1xxt_2x, xt_1yxxt_2y \approx xt_1xyxxt_2y \}^\delta \vdash {\bf u} \approx {\bf w_1}$.

\begin{claim} \label{J3c2} If for some $c \in \ocs({\bf w_1})$ we have $({_{1{\bf w_1}}y}) <_{\bf w_1} c <_{\bf w_1} ({_{2{\bf w_1}}x})$ then $c$ is not the last occurrence of some variable $z \ne x$ with $occ_{\bf w_1}(z) \ge 2$;

\end{claim}

\begin{proof} Suppose that $c = {_{last{\bf w_1}}z}$ for some $z \in \con({\bf w_1})$. (This includes the case when $z$ is linear in $\bf w_1$.)

Since the word $\bf w_1$ contains the scattered subword $xzx$, the word $\bf v$ also contains the scattered subword $xzx$.
Therefore, we must have ${_{1{\bf v}}y} <_{\bf v} {_{1{\bf v}}x} <_{\bf v} {_{last{\bf v}}z}$.
Now the word $\bf v$ contains the scattered subword $yxz$. So, the word $\bf w_1$ must also contain the scattered subword $yxz$.
This contradicts the fact that $c = {_{last{\bf w_1}}z}$.
\end{proof}

Using Lemma \ref{axil} we insert an occurrence of $x$ in ${\bf w_1}$ right after ${_{1{\bf w_1}}y}$ and denote the
resulting word by ${\bf w_2}$. Notice that $({_{1{\bf w_2}}x})\ll_{\bf w_2} ({_{1{\bf w_2}}y}) \ll_{\bf w_2} ({_{2{\bf w_2}}x})$.
Lemma \ref{axil} implies that $\{xt_1xt_2x \approx xt_1xxt_2x, xt_1yxxt_2y \approx xt_1xyxxt_2y \}^\delta \vdash {\bf w_1} \approx {\bf w_2}$.

\begin{claim} \label{J3c6}
If for some $c \in \ocs({\bf w_2})$ we have $({_{2{\bf w_2}}x}) <_{\bf w_2} c <_{\bf w_2} ({_{2{\bf w_2}}y})$ then $c$ is not the last occurrence of some variable $z \ne y$ with $occ_{\bf w_2}(z) \ge 2$;

\end{claim}

\begin{proof} Suppose that $c = {_{last{\bf w_2}}z}$ for some $z \in \con({\bf w_2})$. (This includes the case when $z$ is linear in $\bf w_2$.)

Since the word $\bf w_2$ contains the scattered subword $xyz$, the word $\bf v$ also contains the scattered subword $xyz$.
Therefore, we must have ${_{1{\bf v}}y} <_{\bf v} {_{1{\bf v}}x} <_{\bf v}  {_{2{\bf v}}y} <_{\bf v}  {_{last{\bf v}}z}$.
Now the word $\bf v$ contains the scattered subword $yyz$. So, the word $\bf u$ must also contain the scattered subword $yyz$.
This contradicts the fact that $c = {_{last{\bf w_2}}z}$.
\end{proof}

Using Lemma \ref{axil} we insert an occurrence of $y$ in ${\bf w_2}$ right after ${_{2{\bf w_2}}x}$ and denote the
resulting word by ${\bf w_3}$. Notice that $({_{1{\bf w_3}}x})\ll_{\bf w_3} ({_{1{\bf w_3}}y}) \ll_{\bf w_3} ({_{2{\bf w_3}}x}) \ll_{\bf w_3} ({_{2{\bf w_3}}y})$.
Lemma \ref{axil} implies that $\{xt_1xt_2x \approx xt_1xxt_2x, xt_1yxxt_2y \approx xt_1xyxxt_2y \}^\delta \vdash {\bf w_2} \approx {\bf w_3}$.

Now we apply an identity from $\Sigma^\delta$ to ${\bf w_3}$ and obtain a word ${\bf u_1}$.
 Notice that $|\dis (\mathcal P_{1,2} \rightarrow \mathcal P_{1,1} \wedge \mathcal P_{1,2} \wedge \mathcal P_{2,2})({\bf u_1} \approx {\bf v})| < |\dis (\mathcal P_{1,2} \rightarrow \mathcal P_{1,1} \wedge \mathcal P_{1,2} \wedge \mathcal P_{2,2})({\bf u} \approx {\bf v})|$. If ${\bf u} \approx {\bf v}$ does not have Property $\mathcal P_{2,2}$ we use the dual arguments and the dual of Lemma \ref{axil} (in particular, we use the dual identity $yt_1xxyt_2x \approx yt_1xxyxt_2x$).

 Lemma \ref{main} implies that every identity of $S$ can be derived from $\{xt_1xt_2x \approx xt_1xxt_2x, xt_1yxxt_2y \approx xt_1xyxxt_2y, yt_1xxyt_2x \approx yt_1xxyxt_2x\}^\delta \cup \Sigma^\delta$ and from a $\mathcal P_{1,1} \wedge \mathcal P_{1,2} \wedge \mathcal P_{2,2}$-identity in $J_3$. \end{proof}

We say that a {\em 12-block} in $\bf u$ is a maximal subword of $\bf u$ which contains neither first nor last occurrences of variables. Evidently, a 12-block in $\bf u$
may contain only occurrences of variables $x$ with $occ_{\bf u}(x) \ge 3$.
If ${\bf u} \approx {\bf v}$ is a ($\mathcal P_{1,1} \wedge \mathcal P_{2,2}\wedge \mathcal P_{1,2}$)-identity, then the sequences of the first and
the last occurrences of variables in $\bf u$ and $\bf v$ are the same. If $\bf B$ is a 12-block in $\bf u$, then the {\em corresponding 12-block} $\bf B'$ in $\bf v$
is located between the corresponding first and last occurrences of variables.

\begin{lemma} \label{J3c9}  Every ($\mathcal P_{1,1} \wedge \mathcal P_{2,2}\wedge \mathcal P_{1,2}$)-identity in $J_3$  can be derived from $\{xt_1xt_2x \approx xt_1xxt_2x, xt_1yxxt_2y \approx xt_1xyxxt_2y, yt_1xxyt_2x \approx yt_1xxyxt_2x \}^\delta \cup \Delta ^\delta$. \end{lemma}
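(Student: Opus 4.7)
The plan is to apply Lemma~\ref{main} with the discrimination function $\dis = \ch$ (the set of unstable pairs), taking $\mathcal P$ to be ``$\mathcal P_{1,1} \wedge \mathcal P_{2,2} \wedge \mathcal P_{1,2}$-identity in $J_3$'' and the target property $\mathcal Q$ to be ``trivial identity''. Writing $\Sigma$ for the set of identities allowed in the lemma statement, it suffices to show that for every non-trivial such ${\bf u} \approx {\bf v}$, one can produce a word ${\bf u}_1$ with $\Sigma \vdash {\bf u} \approx {\bf u}_1$ such that ${\bf u}_1 \approx {\bf v}$ is again a $\mathcal P$-identity and $|\ch({\bf u}_1 \approx {\bf v})| < |\ch({\bf u} \approx {\bf v})|$.

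First I would establish that corresponding 12-blocks in ${\bf u}$ and ${\bf v}$ have the same content. Since ${\bf u} \approx {\bf v}$ lies in $J_3$, it preserves all scattered subwords of length at most~$3$; together with the fact that $\mathcal P_{1,1} \wedge \mathcal P_{2,2} \wedge \mathcal P_{1,2}$ preserves the first/last-occurrence skeleton, any variable $z$ occurring in a 12-block ${\bf B}$ of ${\bf u}$ bracketed by first/last occurrences $a, b$ must also occur inside the corresponding 12-block ${\bf B}'$ of ${\bf v}$, as witnessed by the scattered subword $azb$. Neither the first nor the last occurrence of $z$ in ${\bf v}$ can slip into ${\bf B}'$, because its position relative to $a$ and $b$ is forced by the $\mathcal P$-conditions.

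Next, by Lemma~\ref{crit}, I would pick a critical pair $\{c, d\} \subseteq \ocs({\bf u})$ with $c \ll_{\bf u} d$; the three $\mathcal P$-conditions rule out $\{c, d\}$ being $\{{_{1{\bf u}}x}, {_{1{\bf u}}y}\}$, $\{{_{last{\bf u}}x}, {_{last{\bf u}}y}\}$, or $\{{_{1{\bf u}}x}, {_{last{\bf u}}y}\}$, so at least one of $c, d$ is a middle occurrence inside some 12-block. The main tool to produce ${\bf u}_1$ will be an inflation-swap-deflation construction: I would use Lemma~\ref{axil} (derivable from the first three identities in $\Sigma$) together with its dual to insert a duplicate of an appropriate non-linear variable next to the critical pair, then apply an identity from $\Delta^\delta$ (one of the two identities in $\Delta$, chosen according to the first-occurrence and last-occurrence orderings that $\mathcal P_{1,1} \wedge \mathcal P_{2,2}$ forces on the variables involved) to perform the local swap, and finally deflate the duplicate using the dual of Lemma~\ref{axil}. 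The content condition $\con({\bf B}) \subseteq \con({\bf C})$ required in Lemma~\ref{axil} is automatic in this setting, because every letter inside a 12-block is a middle variable and so has a last occurrence in the right-hand context, with a symmetric statement for the dual.

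The hard part will be the case analysis around the critical pair: handling pairs whose members lie in different 12-blocks, or are adjacent to boundary first/last occurrences or to linear letters, and verifying in each case that the inflation-swap-deflation sequence stays inside $J_3$, is a legitimate derivation from $\Sigma$, and genuinely reduces $|\ch|$ without creating new unstable pairs elsewhere in the word.
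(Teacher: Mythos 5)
Your overall architecture (normalize the 12-blocks, then swap middle occurrences using $\Delta^\delta$, with Lemma~\ref{main} governing the induction) is close in spirit to the paper's, but your first step contains a genuine error. You assert that for a $(\mathcal P_{1,1} \wedge \mathcal P_{2,2}\wedge \mathcal P_{1,2})$-identity in $J_3$ the corresponding 12-blocks of $\bf u$ and $\bf v$ automatically have the same content, ``as witnessed by the scattered subword $azb$.'' This is false, and the identity $xt_1yxxt_2y \approx xt_1xyxxt_2y$ --- one of the very identities in the basis, and a $(\mathcal P_{1,1} \wedge \mathcal P_{2,2}\wedge \mathcal P_{1,2})$-identity in $J_3$ --- is a counterexample: in the right-hand side the 12-block between $t_1$ and $_{1}y$ contains an occurrence of $x$, while the corresponding 12-block of the left-hand side is empty. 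The scattered-subword argument does not localize the witnessing occurrence of $z$ to the block ${\bf B}'$: if the left boundary of $\bf B$ is the last occurrence of some variable $r$ (rather than a first occurrence), then ``$rzp$ is a scattered subword of $\bf v$'' only places an occurrence of $z$ somewhere after $_{1{\bf v}}r$, and even when both boundaries are chosen as the nearest \emph{first} occurrence on the left and nearest \emph{last} occurrence on the right, the resulting interval can span several 12-blocks. Your remark that the first and last occurrences of $z$ cannot ``slip into'' ${\bf B}'$ addresses a different issue and does not repair this.

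The paper handles exactly this point with Claim~\ref{J3block}: equality of block contents is not a combinatorial consequence of membership in $J_3$ but is \emph{achieved by derivation} --- one uses the scattered subword $qzp$ (with $q$ the nearest preceding first-occurring variable and $p$ the nearest following last-occurring variable) only to certify that $\bf v$ has some occurrence of $z$ strictly between $_{1{\bf v}}q$ and $_{last{\bf v}}p$, and then applies Lemma~\ref{axil} and its dual to \emph{insert} an occurrence of $z$ into the specific block ${\bf B}'$. If you replace your ``automatic'' step by such an insertion argument, the remainder of your plan (permuting within equalized blocks via $\Delta^\delta$ and adjusting multiplicities via $xt_1xt_2x \approx xt_1xxt_2x$) matches the paper's concluding step; but as written, your final case analysis is also only announced, not carried out, so the proof is incomplete on both counts.
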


\begin{proof} Let ${\bf u} \approx {\bf v}$ be a ($\mathcal P_{1,1} \wedge \mathcal P_{2,2}\wedge \mathcal P_{1,2}$)-identity in $J_3$.

\begin{claim} \label{J3block}
Suppose that a 12-block $\bf B$ in $\bf u$ contains an occurrence of $z \in \con({\bf u})$ but the corresponding block $\bf B'$ in $\bf v$ contains no occurrences of $z$. Then $\{xt_1xt_2x \approx xt_1xxt_2x, xt_1yxxt_2y \approx xt_1xyxxt_2y, yt_1xxyt_2x \approx yt_1xxyxt_2x \}^\delta \vdash {\bf v} \approx {\bf w}$
such that the corresponding block $\bf B''$ in $\bf w$ contains an occurrence of $z$.
\end{claim}

\begin{proof} Let $c \in \ocs({\bf u})$ denote the occurrence of $z$ in $\bf B$. Let $d_1 \in \ocs({\bf u})$ be maximal in order $<_{\bf u}$ such that $d_1 <_{\bf u} c$ and $d_1$ is the first occurrence of some variable $q \in \con({\bf u})$.
(The variable $q$ may coincide with $z$.) Let $d_2 \in \ocs({\bf u})$ be minimal in order $<_{\bf u}$ such that $c <_{\bf u} d_2$ and $d_2$ is the last occurrence of some variable $p \in \con({\bf u})$. (The variable $p$ may coincide with $z$.)

Since the word $\bf u$ contains the scattered subword $qzp$, the word $\bf v$ must also contain $qzp$ as a scattered subword. Therefore, there is an occurrence of $z$
in $\bf v$ between the first occurrence of $q$ and the last occurrence of $p$.
In view of Lemma \ref{axil} and its dual, using an identity in $\{xt_1xt_2x \approx xt_1xxt_2x, xt_1yxxt_2y \approx xt_1xyxxt_2y, yt_1xxyt_2x \approx yt_1xxyxt_2x \}^\delta$ one can insert an occurrence of $z$ in the block $\bf B'$.
\end{proof}

In view of Claim \ref{J3block} we may assume that the corresponding blocks in $\bf u$ and $\bf v$ have the same contents. Now such an identity can be easily derived from
$\{xt_1xt_2x \approx xt_1xxt_2x \}^\delta \cup \Delta^\delta$. \end{proof}

Theorem \ref{J3}, Proposition \ref{Straub}((i) $\rightarrow$ (iii)) and Corollary 6.4 in \cite{OS3} yield an alternative proof of the following.

\begin{cor} \cite[Theorem 3.4]{BS1}  The set of identities $J_m$ is finitely based if and only if $m \le 3$.

\end{cor}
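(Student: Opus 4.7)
The plan is to split the biconditional into the three small cases $m=1,2,3$ (each finitely based) and the tail $m\ge 4$ (nonfinitely based), invoking one already-proved result per case and invoking Corollary 6.4 of \cite{OS3} for the tail. No new inductive machinery is needed; everything has been set up.

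For $m=1$, the scattered subwords of length $\le 1$ are just the individual variables, so $J_1$ consists exactly of the identities ${\bf u}\approx{\bf v}$ with $\con({\bf u})=\con({\bf v})$. By Row 1 of Table \ref{classes}, this is the equational theory of the two-element semilattice, finitely based by $\{x\approx x^2,\ xy\approx yx\}$. For $m=2$, Volkov's theorem (cited in Section~2) says $J_2=Eq(S_3)$; the implication (i)$\to$(iii) of Proposition \ref{Straub} then yields the finite basis $\{\sigma_1,\sigma_2,\ xt_1xt_2x\approx xt_1t_2x\}^\delta$ for $J_2$. For $m=3$, Theorem \ref{J3} (whose proof has just been given via Lemma \ref{main}) supplies an explicit finite basis for $J_3$ whose identities contain at most two non-linear variables.

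For the converse, I want to show that $J_m$ is nonfinitely based whenever $m\ge 4$. By Volkov's theorem $J_m=Eq(S_{m+1})$, so it suffices to prove that $S_{m+1}$ is NFB. The idea is to feed $S_{m+1}$ into Corollary 6.4 of the companion paper \cite{OS3}, whose hypotheses are phrased in terms of certain short words being isoterms together with some additional identities holding in $S$. The verification of these hypotheses is straightforward from the defining property of $J_m$: every word of length at most $m$ is an isoterm for $S_{m+1}$, because two such words are identified by $J_m$ only if they have the same set of scattered subwords of length $\le m$, which forces equality.

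The main --- really the only --- obstacle is checking that, for $m\ge 4$, the specific isoterm configuration demanded by Corollary 6.4 in \cite{OS3} is present in $S_{m+1}$; this is a bookkeeping exercise about scattered subwords and is the content of the cited corollary's applicability. Once that is done, the corollary delivers NFB for $S_{m+1}$, completing the equivalence.
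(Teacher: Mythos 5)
Your decomposition matches the paper's own (one-sentence) proof exactly: Proposition \ref{Straub}((i)$\rightarrow$(iii)) for $m=2$, Theorem \ref{J3} for $m=3$, and Corollary 6.4 of \cite{OS3} for non-finite-basability when $m\ge 4$, with the $m=1$ case being classical. One small caution: your side remark that every word of length at most $m$ is an isoterm for $S_{m+1}$ is false as stated (for instance $x^m \approx x^{m+1}$ lies in $J_m$, since both sides have the same scattered subwords of length $\le m$), but this plays no role in the citation-level argument, which is all the paper itself supplies.
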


\subsection*{Acknowledgement} The author thanks Gili Golan, Edmond Lee, Wenting Zhang and an anonymous referee for helpful comments and suggestions.


\begin{thebibliography}{123}

\addcontentsline{toc}{section}{{\noindent\bf Bibliography}}

\bibitem{AV} Ashikhmin, D. N., Volkov, M. V., Zhang, W. T.: {The finite basis problem for Kiselman monoids}.  Preprint, available under arXiv:1411.0223[math.GR]


\bibitem{BS} Blanchet-Sadri, F.: {Equations and dot-depth one}. Semigroup Forum, {\bf 47}, 305--317 (1993)

\bibitem{BS1} Blanchet-Sadri, F.: {Equations and monoid varieties of dot-depth one and two}. Theoret. Comput. Sci. {\bf 123},  239--258 (1994)

\bibitem{Ed} Edmunds, C.C.: {On certain finitely based varieties of semigroups}. Semigroup Forum,  {\bf 15}(1),  21–-39 (1977)


\bibitem{El} Eilenberg, S.: {Automata, Languages and Machines} Vol. B (Academic Press, 1976)


\bibitem{MJ} Jackson, M. G.: {Finiteness properties of varieties and the restriction to finite algebras}. Semigroup Forum, {\bf 70}, 159--187 (2005)


\bibitem{JS} Jackson, M. G., Sapir, O. B.: {Finitely based, finite sets of words}. Internat. J. Algebra Comput., {\bf 10}(6),  683--708 (2000)



\bibitem{EL} Lee, E. W. H.: {Finitely generated limit varieties of aperiodic monoids with central idempotents}. J. Algebra Appl. {\bf 8}(6), 779--796 (2009)

\bibitem{EL1} Lee, E. W. H.: {Maximal Specht varieties of monoids}. Mosc. Math. J. {\bf 12},  787--802 (2012)


%\bibitem{RM} McKenzie, R. N. : {Tarski's finite basis problem is undecidable}.  Internat. J. Algebra Comput., {\bf 6},  49--104 (1996)

\bibitem{P} Perkins, P. : {Bases for equational theories of semigroups}.
  J. Algebra {\bf 11}, 298--314 (1969)


\bibitem{Pin} Pin, J-E.: {Varietes de Languages Formels} (Masson, 1984)(in French); English translation {Varieties of Formal Languages} (North Oxford Academic, 1986 and Plenum, 1986)



%\bibitem{MS} Sapir, M. V.: {Problems of Burnside type and the finite basis property in varieties of semigroups}. Math. USSR Izvestiya,
 %{\bf 30} (2),  295--314 (1988)


%\bibitem{MS1} Sapir, M. V.: {Inherently nonfinitely based finite semigroups}. Math. USSR Sbornik,
 %{\bf 61} (1),  155--166 (1988)


\bibitem{OS} Sapir, O. B.: {Finitely based words}, Internat. J. Algebra Comput. {\bf 10}(4)  457--480 (2000)


\bibitem{OS3} Sapir, O. B.: {Non-finitely based monoids},  Semigroup Forum {\bf 90}(3), 557--586  (2015)


\bibitem{OS1} Sapir, O. B.: {The finite basis problem for words with at most two non-linear variables}. Preprint, available under arXiv:1403.6430[math.GR]


\bibitem{OS2} Sapir, O. B.: {Finitely based sets of 2-limited block-2-simple words}. Preprint, available under arXiv:1509.07920[math.GR]


\bibitem{SV} Shevrin, L. N., Volkov, M. V.: {Identities of semigroups}. Russian Math (Iz. VUZ), {\bf 29}(11),  1--64 (1985)


%\bibitem{MV} Volkov, M. V.: {The finite basis problem for finite semigroups}. Sci. Math. Jpn.,  {\bf 53}, 171--199 (2001)

\bibitem{MV1} Volkov, M. V.: {A general finite basis condition for system of semigroup identities}.
Semigroup Forum, {\bf 86}, 181--191 (1990)

\bibitem{MV2} Volkov, M. V.: {Reflexive relations, extensive transformations and piecewise testable languages of a given height}.  Internat. J. Algebra Comput. {\bf 14} (5,6),  817--827 (2004)




\end{thebibliography}
\end{document}